\newtheorem{th.}{Theorem}[section]
\newtheorem{thm}[th.]{Theorem}
\newtheorem{lemma}[th.]{Lemma} 
\newtheorem{proposition}[th.]{Proposition}
\newtheorem{corollary}[th.]{Corollary}
\newtheorem{thmx}{Theorem}
\newtheorem{thmxp}{Theorem}
\theoremstyle{definition}
\newtheorem{definition}[th.]{Definition}
\numberwithin{equation}{section}
\newcommand{\N}{\mathbb{N}}
\newcommand{\Z}{\mathbb{Z}}
\newcommand{\R}{\mathbb{R}}
\newcommand{\G}{\mathcal{G}}
\newcommand{\eps}{\varepsilon}
\newcommand{\kg}{\mathfrak{g}}
\newcommand {\cB} {{\mathcal B}}
\newcommand {\cD} {{\mathcal D}}
\newcommand {\cF} {{\mathcal F}}
\newcommand {\cP} {{\mathcal P}}
\newcommand {\cR} {{\mathcal R}}
\newcommand {\cS} {{\mathcal S}}
\newcommand {\sC} {{\mathscr C}}
\newcommand {\sR} {{\mathscr R}}
\newcommand {\sS} {\mathscr{S}}
\newcommand {\sT} {\mathscr{T}}
\DeclareMathOperator{\supp}{supp}
\DeclareMathOperator{\SL}{SL}
\DeclareMathOperator{\Lie}{Lie}
\DeclareMathOperator{\dist}{dist}
\DeclareMathOperator{\Span}{span}
\newcommand{\1}{{\mathbbm{1}}}
\DeclareMathOperator{\Ad}{Ad}
\DeclareMathOperator{\Aff}{Aff}
\DeclareMathOperator{\Lip}{Lip}
\DeclareMathOperator{\Aut}{Aut}
\DeclareMathOperator{\inj}{inj}
\DeclareMathOperator{\Id}{Id}
\newcommand{\dd}{\,\mathrm{d}}
\newcommand{\E}{\mathbb{E}}
\DeclareMathOperator{\Gr}{Gr}
\DeclareMathOperator{\dang}{d_\measuredangle}
\DeclareMathOperator{\Leb}{Leb}
\renewcommand{\setminus}{\smallsetminus}
\renewcommand{\subset}{\subseteq}
\newcommand{\set}[1]{\{\, #1  \,\}}     
\newcommand{\setbig}[1]{\bigl\{\, #1  \,\bigr\}}     
\newcommand{\setBig}[1]{\Bigl\{\, #1  \,\Bigr\}}
\newcommand{\Lyap}{\ell}
\newcommand {\ttr} {\mathtt{r}}
\newcommand {\ttb} {\mathtt{b}}
\newcommand {\ttm} {\mathtt{m}}
\newcommand{\norm}[1]{\lVert#1\rVert}   
\newcommand{\normBig}[1]{\Bigl\lVert#1\Bigr\rVert}
\newcommand{\abs}[1]{\lvert#1\rvert}    
\newcommand{\abse}[1]{\left\lvert#1\right\rvert}
\title{Khintchine dichotomy for self-similar measures}
\author{Timoth\'ee B\'enard }
\address{CNRS – LAGA, Universit\'e Sorbonne Paris Nord, 99 avenue J.-B.
Cl\'ement, 93430 Villetaneuse}
\email{benard@math.univ-paris13.fr}
\author{Weikun He}
\address{State Key Laboratory of Mathematical Sciences, Academy of Mathematics and System Science, Chinese Academy of Sciences, Beijing 100190, China}
\email{heweikun@amss.ac.cn}
\thanks{W.H. is supported by the National Key R\&D Program of China (No. 2022YFA1007500) and the National Natural Science Foundation of China (No. 12288201).}
\author{Han Zhang}
\address{School of Mathematical Science, Soochow University, Suzhou 215006, China
}
\email{hzhang.math@suda.edu.cn}
\thanks{H.Z. is supported by the startup grant of Soochow University.}
\subjclass[2010]{Primary 37A99, 11J83; Secondary 22F30.}
\date{}
\begin{document}

\begin{abstract}
We extend Khintchine's theorem to all self-similar probability measures on the real line. When specified to the case of the Hausdorff measure on the middle-thirds Cantor set, the result is already new and provides an answer to an old question of Mahler. The proof consists in showing effective equidistribution in law of expanding upper-triangular random walks on $\SL_{2}(\R)/\SL_{2}(\Z)$, a result of independent interest.
\end{abstract}
\maketitle  

\setcounter{tocdepth}{1}
\tableofcontents


\section{Introduction} 

A Borel probability measure $\sigma$ on the real line $\R$ is called \emph{self-similar} if it satisfies 
\begin{equation} \label{eq-selfsimilar}
\sigma=\sum_{i=1}^{\ttm} \lambda_i \, \phi_{i\star}\sigma
\end{equation}
for some integer $\ttm\geq 1$, some probability vector $(\lambda_1,\cdots,\lambda_{\ttm}) \in \mathbb{R}_{> 0}^{\ttm}$, and some invertible affine maps  $\phi_{1}, \dotsc, \phi_{\ttm} : \R \to \R$ without common fixed point. 
This includes Hausdorff measures on missing digit Cantor sets.
For example, the one on the middle-thirds Cantor set satisfies \eqref{eq-selfsimilar} with $\lambda_1 =  \lambda_2 = 1/2$ and $\phi_{1} : t \mapsto t/3$ and $\phi_{2} : t \mapsto t/3+2/3$.
Another standard definition of self-similar measures requires that all the maps $\phi_{i}$ are contracting.
We do \emph{not} impose such a condition,  see \S\ref{conventions} for further discussion.

It is particularly intriguing to explore the Diophantine properties of points within the support of a self-similar measure. This research topic was proposed by 
Mahler {in} \cite[Section 2]{Mahler84}, asking how well  irrational numbers in the  middle-thirds Cantor set can be approximated by rational numbers. One approach to framing Mahler's question is by investigating whether Khintchine's theorem extends to the middle-thirds Cantor measure (as asked by Kleinbock-Lindenstrauss-Weiss in \cite[Section 10.1]{KLW04}). 
 
Let us recall the classical Khintchine theorem. 
Here and hereafter, $\psi:\mathbb{N}\to \mathbb{R}_{>0}$ is a  function that will be referred to as an \emph{approximation function}. A point $s\in \R$ is called \emph{$\psi$-approximable} if there exist infinitely many $(p,q)\in \mathbb{Z}\times \mathbb{N}$ such that
\begin{equation} \label{khintchine-equation}
|qs-p|<\psi(q).
\end{equation}
Denote by $W(\psi)$ the set of $\psi$-approximable points in $\mathbb{R}$. 
The classical Khintchine theorem for the Lebesgue measure \cite{Khintchine24,Khintchine26} states that given a non-increasing approximation function $\psi$, the set $W(\psi)$  has  null Lebesgue measure if the series $\sum_{q\in \N} \psi(q)$ is convergent, and full Lebesgue measure otherwise.
 
\bigskip
{In this paper,} we extend  Khintchine's theorem to all self-similar probability measures on $\R$.

\begin{thmx}[Khintchine's {theorem} for self-similar measures]  \label{Kintchine-Cantor} 
Let $\sigma$ be a  self-similar  probability measure on $\R$, let $\psi :\mathbb{N}\to \mathbb{R}_{>0}$ be a non-increasing function. Then 
\begin{equation} \label{K-dichotomy}
\sigma (W(\psi))
= \left\{
    \begin{array}{ll}
        0   & \text{if } \sum_{q \in \N} \psi(q)<\infty, \\
         &\\
        1 & \text{if }\sum_{q \in \N} \psi(q)=\infty.
    \end{array}
\right.
\end{equation}
\end{thmx}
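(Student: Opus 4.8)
The plan is to reduce the dichotomy to a dynamical statement about random walks on the space of unimodular lattices $X = \SL_2(\R)/\SL_2(\Z)$, following the Dani correspondence. First I would set up the correspondence: for $s \in \R$, the question of whether $s$ is $\psi$-approximable translates, via the one-parameter diagonal flow $a_t = \diag(e^{t/2}, e^{-t/2})$ and the unipotent $u_s = \left(\begin{smallmatrix} 1 & s \\ 0 & 1 \end{smallmatrix}\right)$, into a shrinking-target problem: $s \in W(\psi)$ if and only if the orbit $(a_t u_s \Z^2)_{t \geq 0}$ enters a prescribed family of shrinking neighborhoods $E_t$ of the cusp infinitely often, where the measure of $E_t$ is comparable to $\psi(e^{t})$ (up to the usual logarithmic change of variables relating $\sum_q \psi(q)$ to $\int \psi(e^t)\,e^t\,dt$, or rather to $\int \psi(e^t)\,dt$ after accounting for the $q \asymp e^t$ weighting — the convergence of $\sum \psi(q)$ is equivalent to convergence of $\int_1^\infty \psi(t)\,dt$ since $\psi$ is non-increasing).

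The heart of the argument is the effective equidistribution in law of the expanding upper-triangular random walk mentioned in the abstract. Writing the self-similarity relation \eqref{eq-selfsimilar} in terms of the affine maps $\phi_i(t) = r_i t + b_i$, one encodes the pushforward $\phi_{i\star}$ dynamically: after conjugating by $a_t$ for suitable $t$, the action of $\phi_i$ on the parametrizing torus becomes a contraction paired with a unipotent translation, so that iterating the self-similarity corresponds to running a random walk by upper-triangular matrices on $X$, started from (the pushforward under $s \mapsto u_s\Z^2$ of) the measure $\sigma$ itself. The key input — which I would invoke as the main theorem proved earlier in the paper — is that the law of this random walk, suitably normalized, equidistributes toward the Haar measure $m_X$ on $X$ with an \emph{effective} (polynomial) rate. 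This is where the non-concentration / Fourier-decay or granularity properties of $\sigma$ (a consequence of $\sigma$ not being supported on a single point, together with Diophantine properties of the ratios $r_i$, or else a separate argument when the $r_i$ generate a non-discrete group) must be exploited to rule out escape of mass and intermediate-dimensional obstructions.

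Given effective equidistribution, the two halves of the dichotomy are handled by standard Borel–Cantelli technology adapted to this dynamical setting. For the \textbf{convergence case} ($\sum_q \psi(q) < \infty$): the easy Borel–Cantelli lemma applies once one knows $\sigma(\{s : a_{t_n} u_s \Z^2 \in E_{t_n}\}) \ll m_X(E_{t_n}) \asymp \psi(e^{t_n})$ along a discretizing sequence $t_n$; summability of $\psi$ then forces $\sigma(W(\psi)) = 0$. For the \textbf{divergence case} ($\sum_q \psi(q) = \infty$): one needs a quantitative second-moment (quasi-independence) estimate, $\sigma(A_m \cap A_n) \ll \sigma(A_m)\sigma(A_n) + (\text{error decaying in } |m-n|)$, for the events $A_n = \{s : a_{t_n} u_s \Z^2 \in E_{t_n}\}$; this is where effectivity of the equidistribution rate is essential, since it controls correlations between the target events at different times. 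The Paley–Zygmund / divergence Borel–Cantelli lemma then yields $\sigma(\limsup A_n) > 0$, and a zero–one law (ergodicity of the self-similar structure, or a density-point argument using the self-similarity of $\sigma$ and the fact that $W(\psi)$ is invariant under rational translations and scalings, hence its $\sigma$-measure is $0$ or $1$) upgrades this to $\sigma(W(\psi)) = 1$.

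The step I expect to be the main obstacle is establishing the effective equidistribution of the upper-triangular random walk with a rate strong enough to close the quasi-independence estimate in the divergence case — in particular, controlling the behavior near the cusp uniformly and handling the arithmetic of the contraction ratios $r_i$ (the case where $\log r_i / \log r_j$ is irrational versus rational, and the possibility of a common fixed point being excluded only at the level of the affine maps, not the linear parts). The reduction from Theorem A to this dynamical input is, by contrast, essentially the classical Dani correspondence together with a careful bookkeeping of how the non-increasing hypothesis on $\psi$ interacts with the discretization $q \asymp e^{t_n}$.
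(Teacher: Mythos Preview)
Your overall architecture is right and matches the paper's: Dani correspondence, effective equidistribution of the expanding upper-triangular walk on $\SL_2(\R)/\SL_2(\Z)$ as the main dynamical input, easy Borel--Cantelli for the convergence case (the paper simply invokes \cite[Theorem 9.1]{KL23} here), and a second-moment/quasi-independence estimate for the divergence case. You are also right that the quasi-independence estimate is where effectivity is spent; in the paper this is packaged as an \emph{effective double equidistribution} statement (Proposition~\ref{decorrelation-sigma}), which is exactly your $\sigma(A_m\cap A_n)\ll\sigma(A_m)\sigma(A_n)+\text{error}$ in dynamical form.

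Where you diverge from the paper is the endgame in the divergence case. You propose Paley--Zygmund/divergence Borel--Cantelli to get $\sigma(\limsup A_n)>0$ and then a zero--one law to upgrade to full measure. The paper does \emph{not} do this: it runs Schmidt's quantitative argument (Lemma~\ref{Lemma: Schmidt's argument}), bounding $\E\bigl[(\sum_{k=m}^n Z_k)^2\bigr]\ll\sum_{k=m}^n y_k$ over \emph{all} intervals $[m,n]$, which yields almost-sure asymptotics $\sum_{k\le n} Y_k\sim\sum_{k\le n} y_k$ directly and hence full measure with no zero--one law. The paper is explicit that this choice is deliberate: the Paley--Zygmund/inverse Borel--Cantelli route of \cite{KL23} needed the IFS to be contractive and satisfy the open set condition, assumptions the present paper removes precisely by switching to Schmidt's method. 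Your proposed zero--one law (``$W(\psi)$ is invariant under rational translations and scalings'' or ``ergodicity of the self-similar structure'') is the soft spot: $W(\psi)$ has no obvious invariance under the similarities $\phi_i$, and a density/ergodicity argument for general self-similar $\sigma$ without separation conditions is not available off the shelf. So this step is a genuine gap in your plan, not just a stylistic difference.

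One smaller point: your anticipated obstacle about ``the arithmetic of the contraction ratios $r_i$'' (rational vs.\ irrational $\log r_i/\log r_j$) does not arise in the paper. The equidistribution proof is uniform across all self-similar $\sigma$; the only non-concentration input is the H\"older regularity of $\sigma$ (Lemma~\ref{sigma Holder}), which holds under the sole assumption that the $\phi_i$ have no common fixed point.
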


In the divergent case, given a $\sigma$-typical $s\in \R$, we also obtain estimates on the number of solutions $(p,q)$ of the inequality \eqref{khintchine-equation} with bounded $q$, see \eqref{eq:primsol} and \eqref{quant-khint-eq}. 


Let us briefly present the state of the art surrounding Khintchine's theorem on fractals.  

For the convergence part, the case $\psi(q)=1/q^{1+\eps}$ was treated by Weiss \cite{Weiss01} for measures  satisfying certain decay conditions, comprising the case of the middle-thirds Cantor measure.
Weiss' result was later generalized to friendly measures on $\mathbb{R}^d$ for arbitrary positive integer $d$ by Kleinbock-Lindenstrauss-Weiss \cite{KLW04}. See also the related work of Pollington-Velani \cite{PV05} on absolutely friendly measures, and that of Das-Fishman-Simmons-Urba\'nski on quasi-decaying measures \cite{DFSU18, DFSU21}. 

For the divergence part, the case  $\psi(q)=\eps/q$ was treated by Einsiedler-Fishman-Shapira  \cite{EFS11} for missing digit  Cantor measures. Simmons-Weiss \cite{SW19} then significantly generalized their result, promoting it  to  arbitrary self-similar measures on $\mathbb{R}^d$ (along with several refinements). 

All the above works focus on specific approximation functions $\psi$.
Under the sole condition that $\psi$ is non-increasing,  Khalil and Luethi \cite{KL23} were able to extend Khintchine's theorem to self-similar measures $\sigma$ on $\R^d$, provided $\sigma$ has \emph{large  dimension} and the underlying IFS $(\phi_{i})_{1\leq i\leq \ttm}$  is contractive, rational, and satisfies the open set condition. In particular, they derived Khintchine's theorem for one-missing digit Cantor sets in base $5$. With a different approach, based on Fourier analysis, Yu \cite{Yu21} also achieved the convergence part of Khintchine's theorem  for general approximation functions, provided $\sigma$ is a measure with sufficiently \emph{fast average Fourier decay}.  The divergence part was very recently settled by Datta-Jana \cite{DJ24} under similar restrictions,  reaching  some cases that are not covered by \cite{KL23} such as a $3$-missing digit Cantor set in base $450$.

All the aforementioned works impose various constraints for the Khintchine dichotomy \eqref{K-dichotomy} to  hold for a fractal measure. Specifically, none of them establishes \eqref{K-dichotomy} in the case of the  middle-thirds Cantor measure advertised by Mahler. \Cref{Kintchine-Cantor} not only addresses this case,  but also significantly extends beyond it. 



\bigskip

\noindent{\emph{Other related research topics}}. As Mahler pointed out in~\cite{Mahler84}, it is also interesting to investigate \emph{intrinsic} Diophantine approximation on a Cantor set.
This means asking how well points on a fractal set can be approximated by rational points sitting \emph{inside} the fractal set itself.
We refer to recent works~\cite{TWW24,CVY24} and references therein for related research in this direction.

 In addition to fractals, Khintchine's theorem has been extensively studied on submanifolds of $\mathbb{R}^d$. 
Major works in this area include \cite{KM98, VV06, Beresnevich12, BY23}.

\bigskip
{\Cref{Kintchine-Cantor} is derived from an effective equidistribution result in homogeneous dynamics which we  now present.} Consider the real algebraic group $G=\SL_{2}(\R)$, a lattice $\Lambda\subseteq G$, and the  quotient space $X= G/\Lambda$, endowed with the standard hyperbolic metric (\S\ref{conventions}) and the Haar probability measure $m_{X}$.   Write $\inj(x)$ the injectivity radius  at $x\in X$ (\S\ref{conventions}). Denote by $B^\infty_{\infty, 1}(X)$  the set of smooth functions on $X$ which are bounded and have bounded  order-$1$ derivatives, write $\cS_{\infty, 1}(\cdot)$  the associated  $C^1$-norm (\S\ref{conventions}). For $t>0$, $s\in\R$, write $a(t), u(s)\in  G$ the elements given by 
\[a(t)=\begin{pmatrix} t^{1/2}&0\\0&t^{-1/2}\end{pmatrix} \qquad u(s)=\begin{pmatrix} 1&s\\0&1\end{pmatrix}.\]

\begin{thmx}[Effective equidistribution of expanding fractals]  \label{Kintchine-dynamics} 
Let $\sigma$ be a self-similar probability measure on $\R$.
There exists a constant $c=c(\Lambda, \sigma)>0$ such that for all $t>1$, $x \in X$,  $f \in B^\infty_{\infty, 1}(X)$, we have
\begin{equation} \label{eq-thB}
\int_{\R} f (a(t)u(s)x) \dd\sigma(s) = \int_{X} f  \dd m_{X}\,+\, O\bigl(\inj(x)^{-1}\cS_{\infty, 1}(f) t^{-c} \bigr)
\end{equation}
where the implicit constant in $O(\cdot)$ only depends on $\Lambda$ and $\sigma$. 
\end{thmx}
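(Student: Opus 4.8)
\emph{Step 1: reduction to a renormalisation scheme.} After a standard reduction I may assume the $\phi_i(s)=r_is+b_i$ are contracting and $\sigma$ is non‑atomic (when $\sigma$ is a Dirac mass the statement is trivial); in particular $\sigma$ carries a uniform Frostman bound $\sigma(B(y,\eta))\ll_\sigma\eta^{\alpha}$ for some dimension $\alpha=\alpha(\sigma)>0$, the positivity coming from the no‑common‑fixed‑point hypothesis. The two identities $a(t)u(b)=u(tb)a(t)$ and $u(rs)=a(r)u(s)a(r)^{-1}$ in $\SL_2(\R)$, applied to the iterated self‑similarity relation $\sigma=\sum_{\mathbf i}\lambda_{\mathbf i}\phi_{\mathbf i\star}\sigma$ (sum over finite words $\mathbf i$, with $\phi_{\mathbf i}(s)=r_{\mathbf i}s+b_{\mathbf i}$), yield
\[\int_{\R}f(a(t)u(s)x)\dd\sigma(s)=\sum_{\mathbf i}\lambda_{\mathbf i}\int_{\R}f\bigl(u(tb_{\mathbf i})\,a(tr_{\mathbf i})\,u(s)\,a(r_{\mathbf i})^{-1}x\bigr)\dd\sigma(s),\]
and letting $\mathbf i$ run over the cylinders cut out by a stopping time at which $|r_{\mathbf i}|$ first drops below a fixed small constant, each term on the right is an integral of the same shape at a smaller parameter $\asymp t$, against a rescaled copy of $\sigma$, with a bounded perturbation of the base point. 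This is the elementary renormalisation step; the proof consists in extracting a quantitative gain from it and iterating $\asymp\log t$ times down to bounded parameter. Equivalently — and this is the formulation stressed in the abstract — the left‑hand side is the average of $f$ along the $n$‑step random walk on $X$ ($n\asymp\log t$) driven by $\mu=\sum_i\lambda_i\delta_{g_i}$ with the $g_i$ upper triangular and expanding, and \Cref{Kintchine-dynamics} amounts to effective equidistribution in law of that walk.

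\emph{Step 2: quantitative non‑divergence.} Running the scheme requires controlling the cusp uniformly along the iteration. I would prove
\[\sigma\bigl\{\,s\in\R:\inj(a(t)u(s)x)<\eta\,\bigr\}\ \ll\ \inj(x)^{-1}\,\eta^{\kappa}\qquad(t>1,\ \eta>0,\ x\in X)\]
for some $\kappa=\kappa(\sigma)>0$, by combining the Dani–Margulis and Eskin–Margulis–Mozes analysis of the geodesic flow near the cusp — which linearises a deep excursion and reduces the offending set of $s$ to a controlled union of intervals along which $u(s)x$ points nearly rationally at the relevant depth — with the Frostman bound for $\sigma$ applied to those intervals. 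The single power $\inj(x)^{-1}$ is the cost of the deepest such excursion and accounts for the prefactor in \Cref{Kintchine-dynamics}; granting it, one replaces $f$ throughout by its product with a smooth cutoff on a large compact piece of $X$, at acceptable cost at every scale.

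\emph{Step 3: the equidistribution mechanism.} The engine is exponential mixing of the geodesic flow $a(\cdot)$ on $X$, available unconditionally from the spectral gap of $\SL_2(\R)$ on the lattice quotient; I emphasise that \emph{no Fourier decay of $\sigma$ is used}, which is exactly why the method reaches Pisot situations such as the middle‑thirds Cantor measure, for which such decay fails. The basic move is to thicken the fractal horocyclic measure $(u(\cdot)x)_\star\sigma$ transversally — in the contracting and neutral directions — so that it becomes absolutely continuous, apply mixing of $a(t)$, and estimate the thickening error; the subtlety is that thickening in the \emph{expanding} ($U$) direction cannot be done freely, since $a(t)$ amplifies it by a factor $t$, so it must be carried out at scale $\asymp 1/t$, and there one uses self‑similarity to replace each $1/t$‑cylinder by a unit‑scale copy of $\sigma$ and recurse rather than smear. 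This sets up a recursion for the discrepancy over the $\asymp\log t$ scales in which each round gains a fixed factor from mixing while losing only a controlled amount from the roughness of $\sigma$ and from Step 2; summing the resulting geometric series gives the power saving $t^{-c}$ with $c=c(\Lambda,\sigma)>0$, which is \Cref{Kintchine-dynamics}.

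\emph{Main obstacle.} The crux is obtaining a genuine per‑round gain. For a low‑dimensional fractal such as the Cantor measure, smearing in the expanding direction at the forced scale $\asymp 1/t$ produces a density of size $\asymp t^{\,1-\alpha}$, and the mixing rate $\asymp t^{-\beta}$ — with $\beta=\beta(\Lambda)>0$ possibly small — does not beat $t^{\,1-\alpha}$ directly; this is precisely what blocks a Frostman‑only argument and left the Cantor case open. Overcoming it forces an essential use of self‑similarity, and I expect the decisive input to be an effective second‑moment (almost‑orthogonality) estimate for $\int\!\int f(a(t)u(s)x)\overline{f(a(t)u(s')x)}\dd\sigma(s)\dd\sigma(s')$: expanding $\sigma\otimes\sigma$ by self‑similarity, the off‑diagonal cylinder pairs $(\mathbf i,\mathbf j)$ — whose images differ by a horocyclic translate of size $\asymp t\,|b_{\mathbf i}-b_{\mathbf j}|\gg 1$ — decorrelate by mixing, while the diagonal cylinders carry $\sigma\otimes\sigma$‑mass $\ll t^{-\kappa'}$ by the Frostman bound. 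Balancing the diagonal mass against the mixing rate and the cusp term of Step 2, and — crucially — propagating the estimate through all $\asymp\log t$ renormalisation rounds without deterioration, is the technical heart of the argument and is what forces $c$ to be small and non‑explicit.
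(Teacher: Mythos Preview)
Your roadmap correctly isolates several ingredients the paper actually uses --- the random-walk reformulation (\Cref{lm:cocycle}), quantitative non-divergence (\Cref{effective-recurrence}), and the spectral-gap endgame (\Cref{spectralgap-mu}, \Cref{endgame}) --- and you name the central obstacle precisely: for low-dimensional $\sigma$ the density blow-up $t^{1-\alpha}$ from smoothing at scale $1/t$ can swamp the mixing rate. But your proposed resolution is where the sketch breaks. The second moment you write, with $x$ fixed, is literally $|\eta_t*\delta_x(f)|^2$; expanding it gives $\iint f\bigl(u(t(s-s'))y_{s'}\bigr)\overline{f(y_{s'})}\,\dd\sigma(s)\,\dd\sigma(s')$ with $y_{s'}=a(t)u(s')x$, and your claimed off-diagonal ``decorrelation by mixing'' would require $f(u(r)y)\overline{f(y)}$ to be small \emph{for the specific points $y=y_{s'}$}, not merely on average over Haar --- which is false in general, and which on average over the relevant measure $\mu^{*n}*\delta_x$ is exactly the equidistribution you are trying to prove. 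If instead you average over $x$ you recover the genuine $L^2$ bound $\|P_{\eta_t}f\|_{L^2}\ll t^{-c}\cS_{2,1}(f)$ of \Cref{spectralgap-mu}, but converting $L^2$ back to the pointwise quantity $\eta_t*\delta_x(f)$ costs precisely the density blow-up you identified; your renormalisation recursion does not contract, since the gain and loss occur at the same round and the loss dominates when $\alpha$ is small.

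The paper resolves the obstacle by a mechanism absent from your sketch: a \emph{dimensional bootstrap} for the walk distribution $\mu^{*n}*\delta_x$ on $X$, carried out \emph{before} any appeal to mixing. One first shows (\Cref{positive-dimension}) that $\mu^{*n}*\delta_x$ has some fixed positive dimension $\kappa>0$ at scales down to $e^{-cn}$, using only Hölder regularity of the $\sigma^{(n)}$ and recurrence. One then feeds this into the multislicing estimate \Cref{multislicing} --- built on Shmerkin's nonlinear version of Bourgain's discretised projection theorem --- to increment the dimension of $\mu^{*n}*\delta_x$ by a fixed $\eps(\kappa)>0$ per pass (\Cref{dim-increment}), iterating finitely many times until the dimension reaches $3-\kappa$ for any prescribed $\kappa$ (\Cref{high-dim}). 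Only \emph{after} the measure has near-full dimension does one mollify at scale $\rho$ and apply the $L^2$ spectral gap (\Cref{endgame}); at that stage the mollified density is $\ll\rho^{-3\kappa}$ with $\kappa$ as small as one likes, and mixing wins trivially. The additive-combinatorics input (Bourgain/Shmerkin projection, via multislicing) is the decisive missing idea; it is what substitutes for your hoped-for per-round gain, and it does not come from mixing or from the second-moment computation.
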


\Cref{Kintchine-dynamics} states the exponential equidistribution of the measure $\sigma$ seen on a piece of horocycle based at $x$ and expanded by the action of the geodesic flow. The exponent $c$ in the rate of equidistribution  is uniform in $x$, however, equidistribution may take more time to start when $x$ is high in the cusp. This is reflected by the term $\inj(x)^{-1}$ in the rate. A refinement of \Cref{Kintchine-dynamics} tackling double equidistribution will also be established, see \Cref{eq:all-range-preview}.  

The link between homogeneous dynamics and Diophantine approximation  is known as Dani's correspondence \cite{Dani85}. In  \cite{KM99}, Kleinbock-Margulis explicitely demonstrated how to use dynamics to obtain a new proof of the classical Khintchine theorem for the Lebesgue measure, see also the variant \cite{Sullivan} by Sullivan and the seminal work of Patterson \cite{Patterson76}. This dynamical perspective laid the foundation for many subsequent works generalizing Khintchine's theorem in various aspects, see e.g. \cite{KSY22,CY24,KL23}.
In particular, the implication from \eqref{eq-thB} to the convergent case of \Cref{Kintchine-Cantor} is given in the work of Khalil-Luethi \cite[Theorem 9.1]{KL23}. 
Under the extra assumptions that $\sigma$ arises from a contractive IFS satisfying the open set condition, they also show that \eqref{eq-thB} is sufficient to establish the divergent case of \Cref{Kintchine-Cantor}, see \cite[Theorem 12.1]{KL23}. Their proof relies on a subtle inverse Borel-Cantelli Lemma. 
Here, we adopt an approach that is closer to Schmidt's original proof of the quantitative Khintchine theorem \cite{Schmidt}. Taking advantage of \Cref{Kintchine-dynamics}, this enables us to get rid of extra assumptions and
has the double advantage of being shorter and quantitative, see \Cref{Sec-Khintchine-dich}.

Besides its applications to Diophantine approximation, \Cref{Kintchine-dynamics} is interesting in its own right. It can be seen as a \emph{fractal} and \emph{effective} version of Ratner's equidistribution theorem for unipotent flows on $X$. Recall that Ratner's  theorem states that any unipotent orbit on a finite-volume homogeneous space  equidistributes within the smallest finite-volume homogeneous subspace that contains it. Unfortunately, the proof gives no information on the rate of equidistribution. Over the past years, substantial efforts were made to obtain an effective version of Ratner's theorem, in other terms quantify the equidistribution of large but bounded pieces of unipotent orbits. In the case where the unipotent orbit arises from  the action of a  horospherical subgroup, Kleinbock and Margulis established in \cite{KM96,KM12}  the effective equidistribution of  expanding translates under the corresponding diagonal flow. 
 More recently, significant progress on effective Ratner was made by Einsiedler-Margulis-Venkatesh \cite{EMV09}, Str\"{o}mbergsson \cite{Strombergsson15}, Kim \cite{Kim24}, Lindenstrauss-Mohammadi \cite{LM23}, Lindenstrauss-Mohammadi-Wang \cite{LMW-EffEq}, Yang \cite{Yang-EffEq} and  Lindenstrauss-Mohammadi-Wang-Yang \cite{LMWY25}. We note that these works focus on the expanding translates of the \emph{Haar measure} on a piece of  unipotent orbit. 
 In \cite{KL23}, Khalil and Luethi  obtain the first effective equidistribution of expanding \emph{fractal measures} on a unipotent orbit in $\SL_{d+1}(\R)/\SL_{d+1}(\Z)$. They argue under the assumption that the underlying IFS is contractive, rational, satisfies the open set condition, and the measure $\sigma$ is thick enough. They also require that the starting point $x$ belongs to a specific countable set related to the IFS.  In Datta-Jana \cite{DJ24}, effective equidistribution for expanding measures are also obtained in $\SL_{2}(\R)/\SL_{2}(\Z)$ assuming sufficiently fast average Fourier decay and restrictions on the starting point $x$.  
  \Cref{Kintchine-dynamics}  generalizes  Khalil-Luethi's and Datta-Jana's equidistribution results in $\SL_2(\R)/\SL_2(\Z)$ in so far as it allows for an arbitrary lattice $\Lambda$,    any starting point $x$, and most importantly any self-similar measure $\sigma$. The dependence of our error term on the starting point  is also more precise.

\medskip
\noindent\emph{Remark}. The weak-$*$ convergence $\lim_{t \to +\infty}a(t)u(s)x \dd\sigma(s) = m_{X}$ resulting from \Cref{Kintchine-dynamics} is also new. Convergence without rate is also addressed in the independent concurrent work of Khalil-Luethi-Weiss~\cite{KLW25} for rational carpet IFS's in all dimensions. Note however that effectivity, and more precisely a polynomial convergence rate as in \eqref{eq-thB}, is crucial to derive the Khintchine dichotomy \eqref{K-dichotomy}  through Dani's correspondence.

\bigskip

\bigskip

We prove \Cref{Kintchine-dynamics} from the point of view of random walks.
The connection between the asymptotic behaviour of an expanding fractal and that of a random walk is rooted in the work of  Simmons-Weiss~\cite{SW19} and  further exploited in \cite{PS20,PSS23,KL23,DGW24,AG24}.
In our paper, this connection takes the form of \Cref{lm:cocycle}.

We establish the following effective equidistribution in law for random walks driven by  expanding  upper triangular matrices on $X $. In the statement below,  $\R^2$ is endowed with the usual Euclidean structure and we write $e_{1}:=(1,0)\in \R^2$.

\begin{thmx}[Effective equidistribution for random walks]
\label{mu^n-equidistribution}
Let $\mu$ be a finitely supported probability measure on the group
\[
\set{a(t)u(s) : t > 0,\, s \in \R} \subset G. 
\]
Assume that the support of $\mu$ is not simultaneously diagonalizable, and $\mu$ satisfies $\int_{G}\log \|ge_{1}\| \dd\mu(g)>0$.
Then there exists a constant $c=c(\Lambda, \mu) > 0$ such that for all $x \in X$, $n\geq 1$ and $f \in B^\infty_{\infty, 1}(X)$,  we have
\[\mu^{*n}*\delta_{x}(f)  =   m_{X}(f)  + O\bigl(\inj(x)^{-1}  \cS_{\infty, 1}(f) e^{-c n}\bigr)\]
where the implicit constant in $O(\cdot)$ only depends on $\Lambda$ and $\mu$. 
\end{thmx}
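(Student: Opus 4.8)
\medskip
\noindent\emph{Proof strategy for \Cref{mu^n-equidistribution}.}
The plan is to study the Markov operator $P_\mu$ on $X$ given by $P_\mu f(x)=\int_G f(gx)\dd\mu(g)$; since $\Lambda$ is a lattice in the unimodular group $G$, the measure $m_X$ is $P_\mu$-invariant and one has $\mu^{*n}*\delta_x(f)=(P_\mu^n f)(x)$, so the task is to bound $P_\mu^n\bigl(f-m_X(f)\bigr)$ by $O(\inj(x)^{-1}\cS_{\infty,1}(f)e^{-cn})$. Two ingredients enter: a quantitative non-escape estimate for the walk in the cusp, responsible for the $\inj(x)^{-1}$ weight; and a quantitative mixing estimate exploiting that the walk turns a bounded piece of the unstable horocycle into a very long one, responsible for the exponential rate. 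The interplay of these two — equidistribution genuinely needs $\asymp n$ steps, and starts the later the deeper $x$ sits in the cusp — accounts for the precise shape of the error term.

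For the non-escape part I would construct a proper Margulis-type height function $\alpha\colon X\to[1,\infty)$, comparable to $\inj(\cdot)^{-1}$ near the cusp, satisfying a drift inequality $P_\mu^{N}\alpha\le\tfrac12\,\alpha+C$ for some $N=N(\mu)$, $C=C(\mu)$. The hypothesis $\int_G\log\|ge_1\|\dd\mu(g)>0$ is exactly what makes this available: it forces the random products to expand the line $\R e_1$ at a definite exponential rate, so that, away from the cusp, excursions towards it are contracted on average, while the finiteness of $\supp\mu$ controls the single-step distortion. Iterating yields, uniformly over $x$, the bound $\mu^{*n}*\delta_x(\alpha)\le 2^{-\lfloor n/N\rfloor}\alpha(x)+O(1)$, hence uniform quantitative non-divergence of the walk with prefactor $\asymp\inj(x)^{-1}$.

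The heart of the matter is the mixing estimate. Coordinatising a product of increments as $g_1\cdots g_n=a(T_n)u(\tilde\Sigma_n)$ with $T_n=t_1\cdots t_n$, and using $a(T_n)u(\tilde\Sigma_n)=u(T_n\tilde\Sigma_n)\,a(T_n)$, the walk at time $n$ sits at $u(T_n\tilde\Sigma_n)\,a(T_n)x$: the basepoint pushed along the geodesic flow by time $\log T_n\approx n\int\log t\dd\mu>0$, then pushed along the expanding horocycle by the random amount $T_n\tilde\Sigma_n=\sum_{k\le n}s_k\,(t_1\cdots t_k)$. As the increments vary, this amount spreads over a horocyclic arc of length $\asymp e^{n\int\log t\dd\mu}$ through $a(T_n)x$, carrying there a probability measure of self-similar type. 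Were this measure thick one could invoke the effective equidistribution of expanding horocyclic pieces on $\SL_2(\R)/\Lambda$ — which is where the spectral gap of the lattice $\Lambda$ enters, supplying the exponent $c$ — but the dimension of a self-similar measure can be arbitrarily small, so that input is not directly available, and this is precisely what left the general case open. To handle it I would exploit that, unlike a fixed blown-up self-similar measure, the random walk injects a fresh independent increment at every step, and run an inductive renormalisation: track a non-concentration functional of $\mu^{*n}*\delta_x$ resolved at exponentially fine scale — an $L^2$-energy, or an entropy — and show it gains a definite amount at every step of the walk as long as equidistribution has not yet occurred, the geometric input being that a single application of $a(t)u(s)$ with $t$ large converts a bounded arc into a long one, and the arithmetic input being an entropy increment for the convolution of the current measure with the increment law, in the spirit of Hochman's inverse theorem. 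The hypothesis that $\supp\mu$ is not simultaneously diagonalisable guarantees genuine transverse motion at every scale — equivalently, it rules out the degenerate case where the walk is trapped on a single, possibly periodic and hence non-equidistributing, geodesic — which is exactly what prevents the functional from stalling. Carrying this renormalisation out with uniform constants, while tracking the starting height, is where I expect the principal difficulty to lie.

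Assembling the pieces: the renormalisation yields, after $n$ steps, equidistribution of $f-m_X(f)$ against $\mu^{*n}*\delta_x$ down to a scale $e^{-cn}$; the non-escape estimate turns the loss incurred when $x$ is high in the cusp into the factor $\inj(x)^{-1}$; and a routine smoothing — convolving $f$ on $G$ with an approximate identity and bounding the resulting error by $\cS_{\infty,1}(f)$ — upgrades the $L^2$-type decay to the stated bound, with constants depending only on $\Lambda$ and $\mu$. Finally, \Cref{Kintchine-dynamics} follows: via \Cref{lm:cocycle} the average $\int_{\R} f(a(t)u(s)x)\dd\sigma(s)$ in \eqref{eq-thB} is recast as $\mu^{*n}*\delta_{x'}(f)$ for an appropriate $\mu$, a basepoint $x'$ close to $x$, and $n\asymp\log t$, to which \Cref{mu^n-equidistribution} applies.
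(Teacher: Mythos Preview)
Your overall architecture --- non-escape via a Margulis drift, then a dimension-growth phase, then spectral gap to close --- matches the paper's three-phase scheme (\Cref{effective-recurrence}, \Cref{high-dim}, \Cref{endgame}), and you correctly isolate the central difficulty: the induced horocyclic measure is thin, so off-the-shelf effective horocycle equidistribution does not apply.

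The gap is in the dimension-growth step. You propose a step-by-step entropy or $L^2$-energy increment ``in the spirit of Hochman's inverse theorem''. But Hochman's increment is natively about convolutions on $\R$; here one must grow dimension of $\mu^{*n}*\delta_x$ on the $3$-dimensional space $X$, and the walk expands the directions $\kg_-,\kg_0,\kg_+$ at different rates. Your sketch does not say how the renormalisation separates these directions or why the increment would not stall once the measure saturates along the unstable horocycle while remaining thin transversally. This is exactly the obstacle the paper's machinery is built to overcome, and it does so with a different tool: after an elementary argument producing \emph{some} positive dimension (\Cref{positive-dimension}), the bootstrap to dimension close to $3$ is run via the \emph{multislicing} estimate of \cite{BH24} (\Cref{multislicing}), which rests on Shmerkin's nonlinear version of Bourgain's discretized projection theorem rather than on Hochman's convolution inverse theorem. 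The application requires straightening charts adapted to the rootspace decomposition (\Cref{straightening}) so that the pullback of a small ball by a typical random product becomes a thin rectangle, together with a non-concentration check (\Cref{lm:noncon}) fed by the H\"older regularity of the law of $\ttb_g$. None of this is visible in your proposal, and without a concrete substitute the middle phase is not carried out.

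A smaller point: in your last paragraph, \Cref{Kintchine-dynamics} is not obtained by rewriting $\int f(a(t)u(s)x)\dd\sigma(s)$ as a single $\mu^{*n}*\delta_{x'}(f)$. \Cref{lm:cocycle} gives $\eta_t=\int_P \eta_{t\ttr_g}*\delta_g\,\dd\mu^{*n}(g)$; one then discretizes in $\ttr_g$, applies \Cref{high-dim} to $\mu^{*n}*\delta_x$, and feeds each slice into \Cref{endgame} --- see the proof of \Cref{ThB'}.
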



The proof of \Cref{mu^n-equidistribution} is inspired by \cite{BH24}, where the  first-named and second-named authors establish effective equidistribution for random walks on $X$ which are driven by a \emph{Zariski-dense} probability measure on $G$. In our context however, the acting group is \emph{solvable}. The proof consists of three phases. Each step concerns the dimension of the distribution of the random walk at some scale.

First, we show that the random walks gains some initial positive dimension: there exist constants $\kappa>0$, $A>0$ determined by $\Lambda, \mu$ such that for every small $\rho>0$, $x,y\in X$, every $n\geq  |\log \rho | +A |\log \inj(x)|$, 
\[{\mu^{*n}}*\delta_{x}(B_{\rho}y)\leq \rho^\kappa\]
 where the notation $B_{\rho}y$ refers  to the open ball of radius $\rho$ centered at  $y$ in $X$. 

Second, we bootstrap the value of the exponent $\kappa$ arbitrarily close to $3$, say up to $3-\eps$ provided $\rho\leq \rho_{0}(\eps,\Lambda, \mu)$ and $n \geq C_{0}(\eps, \Lambda, \mu) |\log \rho |+A |\log \inj(x)|$.
The method is based on the multislicing argument from \cite{BH24}, which, in turn, relies on discretized projection theorems à la Bourgain.
The idea of iterating a discretized projection theorem in order to bootstrap (rough) dimension dates back to the work of Bourgain-Furman-Lindenstrauss-Mozes~\cite{BFLM} and played an important role in the most recent advances on effectivizing Ratner's theorem mentioned above.
In a very different context, it also played crucial role in recent developments in projection theory (e.g. Orponen-Shmerkin-Wang~\cite{OSW}).
The way we implement this iteration is different from these works and originates from \cite{BH24}.

Finally, once the dimension is close to be full, we conclude using the spectral gap of the convolution operator $f \mapsto \mu*f$ acting on  $L^2(X)$.

\Cref{Kintchine-dynamics}  follows from \Cref{mu^n-equidistribution}, using \Cref{lm:cocycle} and a probabilistic argument. The convergence part of \Cref{Kintchine-Cantor} is then a direct consequence of \Cref{Kintchine-dynamics}, case $\Lambda=\SL_{2}(\Z)$, and \cite[Theorem 9.1]{KL23}. The divergence part is obtained from a refinement of \Cref{Kintchine-dynamics} about double equidistribution,   inspired by \cite{KSW17}, and builds upon  Schmidt's original proof of the quantitative classical Khintchine theorem \cite{Schmidt}.
\bigskip

\noindent{\bf Allowing $\lambda$ to have infinite support}.
Our method allows for slightly more general statements,  extending the aforementioned Khintchine dichotomy and  equidistribution results to   measures arising from a randomized IFS with potentially \emph{infinite support}, provided a finite exponential moment. 

We let $\Aff(\R)$ denote the affine group of $\R$. For every $\phi\in \Aff({\R})$, we let $\ttr_{\phi} \in \R^*$, $\ttb_{\phi}\in \R$ denote the unique numbers such that 
\begin{equation} \label{def-rphi-bphi}
\phi(t)=\ttr_{\phi}t+\ttb_{\phi}, \quad \forall t\in \R.
\end{equation}
We say a probability measure $\lambda$ on  $\Aff(\R)$ has a finite exponential moment if there exists $\eps>0$ such that 
\begin{equation} \label{def-expmoment-lambda}
\int_{\Aff(\R)} |\ttr_{\phi}|^\eps+ |\ttr_{\phi}^{-1}|^\eps+ |\ttb_{\phi}|^\eps \dd \lambda(\phi) <\infty.
\end{equation}

\begin{thmxp}
\label{ThA'}
Let $\lambda$ be a probability measure on $\Aff(\R)$ with a finite exponential moment and such that $\supp \lambda$ does not have a global fixed point. Let $\sigma$ be a probability measure on $\R$ satisfying $\lambda*\sigma=\sigma$.  Then $\sigma$ satisfies the Khintchine dichotomy \eqref{K-dichotomy}.
\end{thmxp}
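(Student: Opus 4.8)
The plan is to re-run the chain of implications behind \Cref{Kintchine-Cantor} --- namely (effective equidistribution for random walks, \Cref{mu^n-equidistribution}) $\Rightarrow$ (effective equidistribution of expanding fractals, \Cref{Kintchine-dynamics}) $\Rightarrow$ (Khintchine dichotomy, \Cref{Kintchine-Cantor}) --- upgrading each link from \emph{finite support} to \emph{finite exponential moment}. The first task is to lift $\lambda$ to a random walk on $G$. After absorbing the sign of $\ttr_\phi$ exactly as in the proof of \Cref{Kintchine-Cantor}, the pushforward of $\lambda$ under a lift of the form $\phi \mapsto a(\ttr_\phi^{-1})\,u(\cdot)$ --- whose precise shape is dictated by \Cref{lm:cocycle}, the inverse ratio occurring because expanding the fractal by the geodesic flow undoes the maps generating $\sigma$ --- is a probability measure $\mu$ on $\set{a(t)u(s) : t>0}$. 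The finite exponential moment \eqref{def-expmoment-lambda} for $\lambda$ translates, after shrinking $\eps$, into $\int_G (\norm{g}^{\eps} + \norm{g^{-1}}^{\eps})\dd\mu(g) < \infty$; in particular $\log\norm{g e_1}$ has a finite exponential moment under $\mu$.

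Next one checks that $\mu$ meets the hypotheses of the infinite-support analogue of \Cref{mu^n-equidistribution}. A family in $\Aff(\R)$ has a common finite fixed point if and only if, after a single conjugation, all its members are linear, i.e.\ the lifted matrices are simultaneously diagonalisable; hence the assumption that $\supp\lambda$ has no global fixed point is exactly that $\supp\mu$ is not simultaneously diagonalisable. The drift condition $\int_G \log\norm{g e_1}\dd\mu > 0$ is the only genuinely new point: it amounts to $\int_{\Aff(\R)} \log\abs{\ttr_\phi}\dd\lambda(\phi) < 0$, i.e.\ to strict contraction on average, and it is forced by the standing hypotheses. Indeed, \eqref{def-expmoment-lambda} already gives $\int \abs{\log\abs{\ttr_\phi}}\dd\lambda < \infty$, and were the average exponent $\geq 0$ the diameter of $\supp\bigl((\phi_n\circ\cdots\circ\phi_1)_\star\sigma\bigr)$ would fail to shrink along typical trajectories, which, combined with $\lambda*\sigma = \sigma$, would force a common fixed point --- this is the classical dichotomy for affine stochastic recursions (Furstenberg, Kesten, Bougerol--Picard).

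The core of the proof is the infinite-support version of \Cref{mu^n-equidistribution}, obtained by re-running its three-phase argument with \emph{bounded support} replaced throughout by \emph{finite exponential moment}. In the first phase (gaining an initial positive dimension), bounded support served only to keep the walk at height $e^{O(n)}$ and to ensure non-concentration of the one-step displacement; the exponential moment supplies both through elementary large-deviation bounds for $\sum_{i \le n}\log\norm{g_i e_1}$ and for the height of $g_n\cdots g_1 x$. In the second phase (bootstrapping the dimension up to $3-\eps$ via the multislicing argument of \cite{BH24} together with a Bourgain-type discretised projection theorem), the decompositions $\mu^{*n} = \mu^{*n_1}*\cdots*\mu^{*n_k}$ still make sense, but each quantitative non-concentration and projection input of \cite{BH24} must be re-derived with the exponential moment in place of compactness of the support; the exponential moment is precisely the integrability that lets one truncate the heavy tails of $\mu$ at scale $e^{\delta n}$ with an exponentially small error. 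In the third phase one invokes the spectral gap of $f \mapsto \mu*f$ on $L^2_0(X)$, i.e.\ that $\norm{\pi(\mu)} < 1$ where $\pi$ is the regular representation of $G$ on $L^2_0(X)$; for an infinitely supported $\mu$ one writes $\mu = p\mu_1 + (1-p)\mu_2$ with $\mu_1$ finitely supported, $\supp\mu_1$ not simultaneously diagonalisable, and $p$ as close to $1$ as desired, so that $\norm{\pi(\mu)} \le p\,\norm{\pi(\mu_1)} + (1-p) < 1$ by the finite-support spectral gap.

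Granting the infinite-support \Cref{mu^n-equidistribution}, the effective equidistribution of the expanding, $\lambda$-stationary fractal $\sigma$ follows exactly as in the finitely supported case --- via \Cref{lm:cocycle}, whose construction needs only $\lambda*\sigma = \sigma$ and whose quantitative control is ensured by the exponential moment, together with the same probabilistic argument; the double-equidistribution refinement \Cref{eq:all-range-preview} is obtained in the same way. From this the Khintchine dichotomy \eqref{K-dichotomy} for $\sigma$ is deduced verbatim as \Cref{Kintchine-Cantor} is deduced from \Cref{Kintchine-dynamics}: the convergence part through the argument of \cite[Theorem 9.1]{KL23}, which takes the equidistribution statement as a black box and otherwise uses only standard properties of $\sigma$ as a measure on $\R$, and the divergence part through the double-equidistribution refinement and Schmidt's quantitative method carried out in \Cref{Sec-Khintchine-dich}; neither implication sees the cardinality of $\supp\lambda$. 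The main obstacle is the second phase of \Cref{mu^n-equidistribution}: verifying that every \emph{bounded support} hypothesis in the multislicing and discretised-projection machinery of \cite{BH24} can be relaxed to a finite exponential moment, in particular that the rare but genuinely non-compact excursions of the heavy-tailed walk do not spoil the quantitative non-concentration estimates. Everything else is either a black box (equidistribution $\Rightarrow$ Khintchine), a standard reduction (the sign of $\ttr_\phi$), or a soft argument (the drift dichotomy of the second paragraph, the spectral-gap truncation).
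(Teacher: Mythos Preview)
Your overall strategy matches the paper's: reduce to $\Aff(\R)^+$, establish effective (double) equidistribution of the expanding fractal via the random walk on $X$, then invoke the black-box implications (convergence via \cite[Theorem~9.1]{KL23}, divergence via Schmidt's method in \Cref{Sec-Khintchine-dich}). One presentational point worth noting: the paper is \emph{already} written under the finite exponential moment hypothesis from \S\ref{conventions} onward --- every result in Sections~\ref{Sec-preliminary}--\ref{Sec-double-eq} is stated and proved assuming only \eqref{def-expmoment-lambda}, never bounded support. So there is no separate ``upgrading'' step; the paper's proof of \Cref{ThA'} simply cites \Cref{red-Aff+}, \Cref{decorrelation-sigma}, \Cref{convergent-Khintchine} and \Cref{quant-Khintchine} in succession.

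There is, however, a genuine gap in your spectral-gap step. The decomposition $\mu = p\mu_1 + (1-p)\mu_2$ with $\mu_1$ finitely supported and $p$ close to $1$ forces $p\mu_1 \le \mu$, hence requires $\mu$ to have atoms of total mass near $1$; for an atomless $\mu$ (which is perfectly allowed under \eqref{def-expmoment-mu}) no such decomposition exists and the bound $\norm{\pi(\mu)} \le p\,\norm{\pi(\mu_1)} + (1-p)$ is unavailable. The paper bypasses this entirely: its third phase (\Cref{spectralgap-mu}, and its random-walk analogue mentioned in the proof of \Cref{ThC'}) does not reduce to the finitely supported case at all, but derives the spectral gap directly from the decay of matrix coefficients \eqref{eq:decay cor} together with the H\"older regularity of $\sigma$ (resp.\ of $\sigma^{(n)}$, \Cref{posdim-sigma}). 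Concretely, one expands $\norm{P_{\eta_t}f}_{L^2}^2$ as a double integral of $\langle hg^{-1}.f,f\rangle$, observes that $hg^{-1} = u(t(s_1-s_2))$, and uses that $\sigma\otimes\sigma\{\abs{s_1-s_2}\le t^{-1/2}\}$ decays polynomially. This is the argument you should substitute for your truncation.
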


\begin{thmxp}
\label{ThB'}
Under the same assumptions,  $\sigma$ satisfies the effective equidistribution for expanding translates from \Cref{eq-thB}. 
\end{thmxp}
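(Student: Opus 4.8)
The plan is to reduce the infinite-support case to the machinery already developed for Theorem~\ref{Kintchine-dynamics} by adapting each of the three ingredients — the cocycle lemma (\Cref{lm:cocycle}), the random-walk equidistribution (\Cref{mu^n-equidistribution}), and the probabilistic passage from the walk to the expanding-horocycle integral — to tolerate a driving measure that is no longer finitely supported but has a finite exponential moment as in \eqref{def-expmoment-lambda}. First I would fix notation: given $\lambda$ on $\Aff(\R)$, the stationary measure $\sigma$ with $\lambda*\sigma = \sigma$ is unique under the no-global-fixed-point hypothesis, and the relevant random walk on $X$ is driven by the pushforward $\mu$ of $\lambda$ under a suitable embedding $\Aff(\R)\hookrightarrow G$ of the form $\phi\mapsto a(\ttr_\phi^{\,?})u(\ttb_\phi/\ttr_\phi^{\,?})$ (with signs handled by passing to the index-two subgroup of orientation-preserving maps, or by allowing $a(t)$ with $t$ of either sign via conjugation by $\diag(1,-1)$). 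The key structural point is that the exponential moment \eqref{def-expmoment-lambda} gives $\int_G \log\|ge_1\|\dd\mu(g) = -\int \log|\ttr_\phi|\dd\lambda(\phi)$ finite, and the expansion hypothesis corresponds exactly to $\int \log|\ttr_\phi|\dd\lambda < 0$, i.e.\ contraction on the real line — which is automatic for a genuine self-similar $\sigma$ and should be derivable in general from the fact that a non-degenerate $\sigma$ exists (a stationary measure for a non-contracting-on-average affine walk without global fixed point cannot be a probability measure). Granting positivity of the Lyapunov-type quantity, one also needs $\supp\mu$ not simultaneously diagonalisable, which follows from $\supp\lambda$ having no global fixed point together with $\supp\lambda$ not being a single homothety class — the degenerate cases being trivial or reducing to rational rotations, which I would dispatch separately.

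Next I would revisit \Cref{mu^n-equidistribution} and check that its proof survives with $\mu$ only exponentially integrable rather than finitely supported. The three phases are: (i) initial positive dimension; (ii) bootstrapping the exponent to $3-\eps$ via the multislicing / discretized-projection argument from \cite{BH24}; (iii) closing the gap with the $L^2$-spectral gap of $f\mapsto\mu*f$. For (iii) the spectral gap for a solvable walk comes not from property~(T)-type input but from the expansion ($\int\log\|ge_1\|\dd\mu>0$) producing exponential mixing into the cusp; this is robust to heavy-but-exponential tails after truncation. For (i) and (ii), the standard device is truncation: write $\mu = \mu_{\le R} + \mu_{>R}$ where $\mu_{>R}$ is the restriction to $\{|\ttr_\phi|+|\ttr_\phi^{-1}|+|\ttb_\phi|>R\}$, which by \eqref{def-expmoment-lambda} has mass $\le e^{-\eps R}$; one runs the dimension/bootstrap arguments for the compactly supported part and absorbs the tail via a Borel–Cantelli-type estimate on the probability that the $n$-step trajectory ever uses a step of size $>R_n$ for $R_n$ growing linearly in $n$. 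The error-term bookkeeping then replaces "finitely many step types" by "controlled exponential moments," which is exactly the kind of quantitative input the discretized sum-product / projection estimates of \cite{BH24} are stated to accommodate. I would isolate the needed moment bounds as a lemma and otherwise cite \cite{BH24} and the body of this paper.

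Then I would run the deduction of Theorem~\ref{ThB'} from the (now generalized) \Cref{mu^n-equidistribution} exactly as Theorem~\ref{Kintchine-dynamics} is deduced: the cocycle identity \Cref{lm:cocycle} rewrites $\int_\R f(a(t)u(s)x)\dd\sigma(s)$ as an average of $\mu^{*n}*\delta_{x}$ evaluated against $f$, up to a scale-matching error governed by the discrepancy between the deterministic time $t$ and the random geodesic time accumulated after $n$ steps; the exponential moment controls the fluctuations of this random time (a sum of i.i.d.\ copies of $\log|\ttr_\phi^{-1}|$) via a Chernoff bound, so that choosing $n\asymp\log t$ yields the claimed error $O(\inj(x)^{-1}\cS_{\infty,1}(f)\,t^{-c})$. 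The main obstacle, I expect, is not any single step but the uniformity of constants through the truncation: the bootstrap threshold $C_0(\eps,\mu)$ and radius $\rho_0(\eps,\mu)$ from phase (ii) must be shown to depend on $\mu$ only through its exponential-moment bound and its non-degeneracy, so that letting the truncation level $R_n\to\infty$ with $n$ does not degrade the exponent $c$; handling this cleanly requires re-deriving the discretized projection estimates with an explicit dependence on the tail, which is the technical heart of the generalization. The verification that a probability stationary measure forces the contraction condition $\int\log|\ttr_\phi|\dd\lambda<0$ is a secondary subtlety worth flagging, since without it the expanding-translate statement is simply false.
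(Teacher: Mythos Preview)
Your overall architecture (reduce to orientation-preserving $\lambda$, invoke the cocycle identity, feed the walk equidistribution into the expanding-translate integral via a large-deviations argument on $\log \ttr_g$) matches the paper's. The substantive divergence is in how you propose to handle the exponential-moment hypothesis.

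You treat finite support as the base case and propose to recover the general statement by truncating $\mu = \mu_{\le R} + \mu_{>R}$, running the dimension and bootstrap arguments on the compactly supported piece, and absorbing the tail. The paper does the opposite: the standing hypothesis throughout Sections~2--5 is already finite exponential moment (see \S\ref{conventions}), and every intermediate result---H\"older regularity of $\sigma^{(n)}$ (\Cref{posdim-sigma}), effective recurrence (\Cref{effective-recurrence}), positive dimension (\Cref{positive-dimension}), the distortion and non-concentration inputs to multislicing (\Cref{lm:nondeg}, \Cref{lm:noncon}), the dimension increment (\Cref{dim-increment}), and the endgame (\Cref{endgame})---is proved directly for such $\mu$, using only moment bounds, large deviations, and the H\"older regularity of the stationary measure. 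No truncation appears. Theorems~\ref{Kintchine-dynamics} and~\ref{mu^n-equidistribution} are then literally special cases of \Cref{ThB'} and \Cref{ThC'}, not stepping stones toward them. This means the uniformity problem you correctly flag as ``the technical heart of the generalization''---that the bootstrap constants $C_0(\eps,\mu_{\le R_n})$, $\rho_0(\eps,\mu_{\le R_n})$ must not degrade as $R_n\to\infty$---simply does not arise in the paper's approach. Your route is not wrong in principle, but it manufactures a difficulty that the direct argument avoids, and you have not resolved that difficulty.

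Two smaller corrections. First, the spectral gap for $P_{\eta_t}$ in \Cref{spectralgap-mu} does not come from ``exponential mixing into the cusp'' driven by expansion; it comes from the decay of matrix coefficients \eqref{eq:decay cor} (a representation-theoretic input) combined with the H\"older regularity of $\sigma$ from \Cref{sigma Holder}. Second, your remark that the existence of a probability stationary measure forces the contraction condition $\int \log|\ttr_\phi|\dd\lambda<0$ is correct and is handled in the paper by citing Bougerol--Picard~\cite{BP92}; the reduction to $\Aff(\R)^+$ is done via a stopping-time argument (\Cref{red-Aff+}) rather than a sign conjugation.
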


Recall that a probability measure $\mu$ on $G$ has a finite exponential moment if for some $\eps>0$, we have 
\begin{equation} \label{def-expmoment-mu}
\int_{G} \|g\|^\eps \dd \mu(g) <\infty.
\end{equation}

\begin{thmxp}
\label{ThC'}
\Cref{mu^n-equidistribution} is valid when the finite support assumption on $\mu$ is relaxed into a finite exponential moment condition. 
\end{thmxp}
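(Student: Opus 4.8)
The plan is to deduce \Cref{ThC'} from \Cref{mu^n-equidistribution} by a truncation argument: a finite exponential moment is precisely what is needed to replace the deterministic bounds $\norm{g_{i}},\norm{g_{i}^{-1}}\le R$, automatic under a boundedness assumption, by exponential concentration estimates. Fix $\eps>0$ as in \eqref{def-expmoment-mu} and put $M:=\int_{G}\norm{g}^{\eps}\dd\mu(g)<\infty$; since $\norm{g^{-1}}=\norm{g}$ on $\SL_2(\R)$, this single moment controls $\norm{g}$ and $\norm{g^{-1}}$ at once, and it also makes $\log\norm{g}\ (\ge 0)$, hence $\abs{\log\norm{ge_{1}}}\le\log\norm{g}\le\eps^{-1}\norm{g}^{\eps}$, $\mu$-integrable, so that the drift $\ell:=\int_{G}\log\norm{ge_{1}}\dd\mu$ is well-defined and, by assumption, positive. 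For $R>0$ set $K_{R}:=\set{g\in G:\norm{g}\le R}$ and $\mu_{R}:=\mu(K_{R})^{-1}\mu|_{K_{R}}$. One checks that the hypotheses of \Cref{mu^n-equidistribution} persist for $\mu_{R}$ when $R$ is large: non-simultaneous-diagonalisability of the support is an open condition witnessed by a finite subset of $\supp\mu$, while $\int_{G}\log\norm{ge_{1}}\dd\mu_{R}\to\ell>0$ by dominated convergence together with $\mu(K_{R})\to 1$. Since the proof of \Cref{mu^n-equidistribution} never uses finiteness of $\supp\mu$ beyond its boundedness, it applies verbatim to each compactly supported $\mu_{R}$.

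The passage from the conclusion for $\mu_{R}$ to the conclusion for $\mu$ is where the exponential moment is genuinely needed. Run the walk $g_{1},\dotsc,g_{n}$ with $g_{i}\sim\mu$ i.i.d.\ and call step $i$ \emph{bad} if $\norm{g_{i}}>R$. By Markov's inequality $\mu(K_{R}^{c})\le MR^{-\eps}$, so a Chernoff bound gives, for any prescribed $\eps_{0}>0$, a threshold $R_{0}=R_{0}(\eps_{0},\mu)$ such that for $R\ge R_{0}$ and all $n$ the number of bad steps is $\le\eps_{0}n$ off an event of probability $\le e^{-c_{1}n}$; and since $\E\bigl[e^{\eps'\sum_{i}(\log\norm{g_{i}})\1_{\norm{g_{i}}>R}}\bigr]\le\bigl(1+\int_{K_{R}^{c}}\norm{g}^{\eps'}\dd\mu\bigr)^{n}$ for $0<\eps'\le\eps$, enlarging $R_{0}$ (now also depending on $\eps_{1}$) we may additionally force $\sum_{\text{bad }i}\log\norm{g_{i}}\le\eps_{1}n$ off an event of probability $\le e^{-c_{1}n}$, for any prescribed $\eps_{1}>0$. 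Fix $R:=R_{0}$. On the complementary typical event the trajectory is a product of at most $\eps_{0}n$ wild matrices of total logarithmic size $\le\eps_{1}n$ interspersed with matrices distributed (essentially) like $\mu_{R}$, while the atypical event contributes only $O(e^{-c_{1}n}\norm{f}_{\infty})$ to $\mu^{*n}*\delta_{x}(f)-m_{X}(f)$, absorbed in the target error.

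It then remains to re-run the three-phase argument behind \Cref{mu^n-equidistribution} along the typical event, i.e.\ to check that it tolerates $\eps_{0}n$ wild steps of total logarithmic size $\le\eps_{1}n$. Each such step displaces a point by hyperbolic distance $O(\log\norm{g_{i}})$, so the injectivity-radius loss remains linear and the delay $A\abs{\log\inj(x)}$ keeps a constant $A$ depending only on $\mu$ (through $R_{0}$ and $\eps_{1}$). In the first two phases --- acquisition of an initial positive dimension, and the multislicing/discretized-projection bootstrap of the dimension up to $3$ --- the wild steps distort only a vanishing proportion of the increments, and the good steps alone keep positive drift $\ge\ell-O(\eps_{1})>0$ once $\eps_{1}$ is small relative to $\ell$, so the Bourgain-type projection inputs, which only need the overwhelming majority of steps to be bounded and genuinely expanding, still apply; in the third phase one simply invokes the already-established compactly supported case for $\mu_{R_{0}}$. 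I expect the main obstacle to be exactly this robustification of the multislicing bootstrap in the presence of rare unbounded steps --- equivalently, isolating the dependence of the constants in \Cref{mu^n-equidistribution} on the truncation radius $R$ cleanly enough to see it is harmless here; everything else is a transcription of the proof of \Cref{mu^n-equidistribution} with almost-sure bounds replaced by high-probability ones.
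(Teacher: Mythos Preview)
Your truncation approach is a substantial detour, and the gap you yourself flag --- controlling how the constants in \Cref{mu^n-equidistribution} depend on the truncation radius $R$, or equivalently making the bootstrap robust to rare unbounded steps --- is left unresolved. The prescription to ``re-run the three-phase argument along the typical event'' is not a proof: once you condition on the good event the increments are no longer i.i.d., so the large-deviation and non-concentration inputs feeding \Cref{multislicing} (via \Cref{lm:nondeg} and \Cref{lm:noncon}) no longer apply as stated, and you offer no mechanism to restore them. Saying ``I expect'' this works is an honest admission that the argument is incomplete.

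More to the point, the detour is unnecessary, and your own observation points at the reason. You note that the proof of \Cref{mu^n-equidistribution} uses only boundedness, not finiteness, of $\supp\mu$; the paper goes one step further. Its standing conventions (\S\ref{conventions}) assume from the outset only a finite exponential moment for $\mu$, and every intermediate result --- recurrence (\Cref{effective-recurrence}), positive dimension (\Cref{positive-dimension}), the dimensional bootstrap (\Cref{high-dim}) --- is already stated and proved under that hypothesis alone. The finite-support clause in \Cref{mu^n-equidistribution} is cosmetic. Hence the paper's proof of \Cref{ThC'} is simply the same assembly: \Cref{high-dim} makes $\mu^{*m}*\delta_x$ $(1-\kappa,\cB_\rho,\rho^\eta)$-robust once $m\ggg|\log\rho|+|\log\inj(x)|$, and the analogues of \Cref{spectralgap-mu} and \Cref{endgame} with $(e^{n-m},\mu^{*(n-m)})$ in place of $(t,\eta_t)$ --- the same computation, with \Cref{posdim-sigma} replacing \Cref{sigma Holder} --- finish the job. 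No truncation, no coupling, no $R$-dependence to track.
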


\bigskip
\noindent{\bf Structure of the paper}.
In \Cref{Sec-preliminary}, we fix notations for the rest of the paper, we present moment and non-concentration estimates for self-similar measures, and we recall some recurrence properties of the $\mu$-walk on $X$.
In \Cref{Sec-positive-dimension}, we deduce  positive dimension of ${\mu^{*n}}*\delta_{x}$ at exponentially small scales.
In \Cref{Sec-bootstrap}, we bootstrap the dimension until it reaches a number arbitrarily close to $3 = \dim X$.
In \Cref{Sec-equidistribution}, we deduce the equidistribution statements, namely 
\Cref{ThB'} and \Cref{ThC'}. 
In \Cref{Sec-double-eq}, we upgrade  \Cref{ThB'} into a double equidistribution statement.
In \Cref{Sec-Khintchine-dich}, we prove the Khintchine dichotomy for every probability measure on $\R$ satisfying certain equidistribution properties, yielding in particular  \Cref{ThA'}.

\bigskip
\noindent{\bf Acknowledgements}.
The authors thank Nicolas de Saxc\'e for sharing his insight on random walks and Diophantine approximation, as well as Tushar Das, Shreyasi Datta, Larry Guth, Osama Khalil, Dmitry Kleinbock, Manuel Luethi, David Simmons, Sanju Velani and the anonymous referee for many helpful comments on earlier versions of this paper.
W.H. and H.Z. thank Barak Weiss for enlightening discussions. H.Z. thanks Ronggang Shi for his encouragement.

\section{Preliminaries}  \label{Sec-preliminary}

In this section, we set up notations and collect basic facts that will be useful for the rest of the paper. 

\subsection{Notation and Conventions} \label{conventions}
Throughout this paper, $G = \SL_2(\R)$, $\Lambda\subseteq G$ is a lattice, and $X= G/\Lambda$.

\bigskip
\noindent{\bf Metric.}
We fix a basis $(e_{-}, e_{0}, e_{+})$ of the Lie algebra $\kg=\Lie( G)$  given by
\[
e_{-}= \begin{pmatrix} 0&0\\ 1 &0\end{pmatrix}  \,\, \,\, \,\, e_{0}=\begin{pmatrix} 1 &0 \\  0&-1 \end{pmatrix}\,\, \,\,   \,\, e_{+}=\begin{pmatrix} 0&1 \\ 0 &0\end{pmatrix}
\]
We assume throughout that $ G$ is endowed with the unique right-invariant Riemannian metric for which $(e_{-}, e_{0}, e_{+})$ is orthonormal. This induces a distance on $ G$ and the quotient $X$ that we denote by $\dist$ in both cases. Given $\rho>0$, we write $B_{\rho}$ to denote  the open ball of radius $\rho > 0$ centered at the neutral element $\Id$ in $G$.
Then the open ball of  radius $\rho$ centered {at} a point $x \in X$ coincides with $B_\rho x$.

The \emph{injectivity radius} of $X$ at a point $x$ is  
\[\inj(x)=\sup\{\, \rho>0 \,:\,\text{the map $B_{\rho}\rightarrow X, g\mapsto gx$ is injective} \,\}. \]

\bigskip
\noindent{\bf  Sobolev norms.}
Set $\Xi_{l}$ the words on the alphabet $\{e_{-},e_{0}, e_{+}\}$ of length at most $l$. Each $D\in \Xi_{l}$ acts as a differential operator on the space of  smooth functions $C^\infty(X)$.  Given $f\in C^\infty(X)$, $k,l\in \N \cup \{\infty\}$, we set
\[\cS_{k,l}(f)=\sum_{D\in \Xi_{l}} \|Df\|_{L^k},\]
where $\|\cdot \|_{L^k}$ refers to the $L^k$-norm for the Haar probability measure on $X$. We let  $B^{\infty}_{k,l}(X)$ denote the space of  smooth functions $f$ on $X$ such that $\cS_{k,l}(f)<\infty$.  


\bigskip
\noindent{\bf Haar measure.}
Let $m_G$ denote the Haar measure on $G$ normalized so that the $G$-invariant Borel measure $m_X$ it induces on $X$ is a probability measure.


\bigskip
\noindent{\bf Driving measures $\lambda$ and $\mu$.}
Let $\Aff(\R)^+$ denote the group of orientation preserving affine transformations of the real line.
Denote by
\[
P = \set{a(t)u(s) : t > 0,\, s \in \R}\subset G
\]
the subgroup of upper triangular matrices with positive diagonal entries.
For every $g \in P$, we let $\ttr_g \in \R_{> 0}$ and $\ttb_g \in \R$ be the unique numbers such that 
\[
g = a(\ttr_g)^{-1} u(\ttb_g) =
\begin{pmatrix} \ttr_g^{-1/2}& \ttr_g^{-1/2}\ttb_g\\0&\ttr_g^{1/2}\end{pmatrix}.
\]
We identify $P$ with $\Aff(\R)^+$ by mapping $g \in P$ with the similarity $s \mapsto \ttr_g s + \ttb_g$.
This is an anti-isomorphism between the two groups.

Fix  a probability measure $\lambda$ on $\Aff(\R)^+$ with support $\supp \lambda$, denote by  $\mu$ the corresponding probability measure on $P$ via the above anti-isomorphism. Throughout this paper, $\lambda$ and $\mu$ determine each other in this way. For  $n \in \N$, we write $\lambda^{*n} = \lambda * \dotsm * \lambda$ to denote the $n$-fold convolution of $\lambda$ with itself, we define $\mu^{*n}$ similarly.

We assume that $\lambda$, and equivalently $\mu$, has a \emph{finite exponential moment} \eqref{def-expmoment-mu}. With our notations, this means there exists $\eps>0$ such that 
\[
\int_{P} |\ttr_{g}|^\eps+ |\ttr_{g}^{-1}|^\eps+ |\ttb_{g}|^\eps \dd \mu(g) <\infty.
\]

We assume that \emph{$\supp \lambda$ does not have a {global} fixed point in $\R$}. This amounts to saying that $\supp \mu$ does not have two common fixed points on the projective line, or alternatively, that the matrices in $\supp \mu$ are not {simultaneously} diagonalizable.

\bigskip
\noindent{\bf Self-similar measure $\sigma$.}
Throughout this paper, we let $\sigma$ denote a probability measure on $\R$ that is $\lambda$-stationary, which means 
\[
\sigma = \int_{\Aff(\R)}  \phi_{\star} \sigma \dd\lambda(\phi).
\]
By a theorem of Bougerol-Picard \cite[Theorem 2.5]{BP92}, the existence of such $\sigma$ is equivalent to the condition:
\begin{equation} \label{eq-contractive-average}
\int_{P} \log \ttr_g \dd \mu(g) <0,
\end{equation}
i.e. the random walk on $\R$ driven by $\lambda$ is \emph{contractive in average}.  Moreover, provided existence, the measure $\sigma$ is uniquely determined by $ \lambda$, see \cite[Corollary 2.7]{BP92}.

\bigskip
\noindent{\bf Lyapunov exponent.} 
Let $\Ad:G\rightarrow \Aut(\kg)$  be the adjoint representation.
We denote by $\Lyap$  the top Lyapunov exponent associated to $\Ad_\star\mu$. 
It is determined only by the diagonal terms and is equal to
\begin{align}\label{def-Lyap}
\Lyap =  - \int_{P} \log \ttr_g \dd \mu(g)>0.
\end{align}


\bigskip
\noindent{\bf Asymptotic notations.} We use the Landau notation $O(\cdot)$ and the Vinogradov symbol $\ll$. Given $a,b>0$, we also write $a\simeq b$ for $a\ll b\ll a$. We also say that a statement involving $a,b$  is valid under the condition $a\lll b$ if it holds provided  $a\leq \eps b$ where $\eps>0$ is a small enough constant. The asymptotic notations $O(\cdot)$, $\ll$, $\simeq$, $\lll$ {implicitly} refer to constants that \emph{are allowed to depend on the lattice $\Lambda$, and the measure $\lambda$} (or equivalently on $\mu$ as one determines the other by our conventions).
The dependence in other parameters will appear in subscript.

\subsection{Regularity of self-similar measures}
We first recall that the measure $\sigma$ has finite moment of positive order and is H\"older-regular. We often refer to the second property as having positive dimension (at all scales).

\begin{lemma}[Moment and H\"older-regularity of $\sigma$]  \label{sigma Holder}
There exists $\gamma>0$ such that 
\begin{equation*}
(i)\,\,\int_{\R} |s|^\gamma\dd\sigma(s)<\infty, \qquad \quad (ii)\,\,\forall r>0, \,\sup_{s\in \R}  \sigma(s+[-r, r]) \ll  r^\gamma.
\end{equation*}
\end{lemma}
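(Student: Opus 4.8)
The plan is to exploit the stationarity identity $\sigma = \lambda * \sigma$ together with the finite exponential moment of $\lambda$ and the contraction-in-average condition \eqref{eq-contractive-average}, via a random-dynamical-system / contraction-argument. Write $\phi_1, \phi_2, \dots$ for i.i.d.\ samples of $\lambda$ on $\Aff(\R)$, and let $\Phi_n = \phi_1 \circ \cdots \circ \phi_n$. Since $\int \log \ttr_\phi \dd\lambda(\phi) = -\ell < 0$ and $\lambda$ has a finite exponential moment, the random products $\ttr_{\Phi_n} = \ttr_{\phi_1}\cdots \ttr_{\phi_n}$ contract exponentially: by a standard large-deviation estimate (e.g.\ Chernoff applied to the i.i.d.\ sum $\sum \log \ttr_{\phi_i}$, using that $t \mapsto \int \ttr_\phi^t \dd\lambda$ is finite and smooth near $0$), there is $\beta > 0$ and $C$ with $\mathbb{E}[\ttr_{\Phi_n}^\beta] \leq C e^{-\beta \ell n /2}$, say, and similarly a control $\mathbb{E}[|\ttb_{\Phi_n}|^\beta] < \infty$ uniformly in $n$ from the exponential moment on $\ttb$. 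From $\Phi_n$ one gets the backward products $\Psi_n = \phi_1 \circ \cdots \circ \phi_n$ converging a.s.\ to a random point $Z = \lim_n \Psi_n(0)$ whose law is exactly $\sigma$ (this is the Bougerol–Picard picture cited in the excerpt), and $\mathbb{E}[|Z|^\gamma] < \infty$ for small enough $\gamma \le \beta$ because $|Z| \leq \sum_n \ttr_{\Psi_{n-1}} |\ttb_{\phi_n}| $ type telescoping bound, each term having a finite $\gamma$-moment summing geometrically. This proves (i).

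For (ii) the idea is the self-improving inequality for the modulus of continuity. Let $g(r) = \sup_{s \in \R} \sigma(s + [-r,r])$. Applying $\sigma = \int \phi_\star \sigma \dd\lambda$,
\[
g(r) = \sup_s \int_{\Aff(\R)} \sigma\bigl(\phi^{-1}(s + [-r,r])\bigr) \dd\lambda(\phi) \leq \int_{\Aff(\R)} g\bigl(r / |\ttr_\phi|\bigr) \dd\lambda(\phi).
\]
Iterating $n$ times gives $g(r) \leq \mathbb{E}\bigl[ g(r / \ttr_{\Phi_n}) \bigr]$. Split the expectation according to whether $\ttr_{\Phi_n} \leq r^{1/2}$ or not. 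On the bad event $\{\ttr_{\Phi_n} > r^{1/2}\}$ (equivalently the product has not contracted enough) one bounds $g \leq 1$ and uses the exponential large-deviation estimate: choosing $n \simeq \log(1/r)$ with the right constant, $\mathbb{P}(\ttr_{\Phi_n} > r^{1/2}) \ll r^{\beta'}$ for some $\beta' > 0$. On the good event, $r/\ttr_{\Phi_n} \geq r^{1/2}$ is not automatically small — so instead one runs the iteration until the first time $N$ (a stopping time) that $\ttr_{\Phi_N}$ drops below a fixed threshold $\delta_0$, using that $g(r) \leq g(\delta_0 r)$-type comparison is vacuous; the cleaner route is to fix a scale $r_0$ and prove $g(r) \leq \theta \, g(c r)$ for constants $\theta < 1 \le c^{-1}$ valid for all $r \le r_0$, which then yields $g(r) \ll r^\gamma$ with $\gamma = \log\theta / \log c$ by iteration. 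To obtain such a one-step contraction we again use $g(r) \leq \mathbb{E}[g(r/\ttr_{\Phi_n})]$ for a fixed large $n = n_0$: pick $n_0$ so that $\mathbb{P}(\ttr_{\Phi_{n_0}} > e^{-\ell n_0 / 2}) < 1/4$ (large deviations), contribute $\leq 1/4$ from the bad event and $\leq g(e^{\ell n_0/2} r)$ from the good event; since $e^{\ell n_0/2} > 1$ this is the wrong direction, so one should instead bound the good part by $\frac34 \cdot g(r \cdot \text{(typical } \ttr^{-1}))$ and absorb.

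The delicate point — and the one I expect to be the main obstacle — is getting the self-improvement in the correct direction: the naive inequality $g(r)\le \mathbb{E}[g(r/\ttr_{\Phi_n})]$ only relates $g$ at scale $r$ to $g$ at the *larger* scales $r/\ttr_{\Phi_n}$ (since $\ttr_{\Phi_n}<1$ typically), which is useless for a small-$r$ upper bound. The standard fix is to instead use the \emph{forward} random walk: iterate the reverse relation, writing $\sigma = (\phi^{-1})_\star \sigma$-style pushforwards so that one compares $g(r)$ to $g$ at *smaller* scales and picks up a genuinely contracting factor. Concretely, one uses that for any fixed $n$, $\sigma = \int (\Phi_n)_\star \sigma \dd\lambda^{*n}$, and estimates $\sigma(s+[-r,r])$ by conditioning on the atom of the level-$n$ cylinder partition containing $s$: if $r$ is comparable to the diameter $\ttr_{\Phi_n}\cdot(\text{something})$ of that cylinder, the interval meets only boundedly many cylinders, each carrying mass $\lambda^{*n}(\text{cylinder}) $, and one invokes a Chernoff/large-deviation bound on $\log \ttr_{\Phi_n}$ to see that with overwhelming probability the cylinder at "depth $n \simeq \log(1/r)$" has mass $\ll r^\gamma$. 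Handling the non-contracting (expanding) elements in $\supp\lambda$ — allowed here since the IFS need not be contractive — requires keeping track of the exponential moment to control rare long excursions, which is exactly where condition \eqref{def-expmoment-lambda} is used; this is the only genuinely technical wrinkle, and it is routine once the large-deviation estimate for $\sum \log\ttr_{\phi_i}$ is in place.
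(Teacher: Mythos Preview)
The paper does not actually prove this lemma: it cites \cite[Proposition 5.1]{GL16} of Guivarc'h--Le Page for (i) and \cite[Theorem 2.12]{AG20} of Aoun--Guivarc'h for (ii), remarking that in the purely contracting case (ii) already appears in Feng--Lau.

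Your sketch for (i) is correct and is essentially the standard argument: represent $\sigma$ as the law of $Z=\sum_{n\geq 1}\ttr_{\phi_1}\cdots\ttr_{\phi_{n-1}}\ttb_{\phi_n}$, use subadditivity of $t\mapsto t^\gamma$ for $0<\gamma\leq 1$, independence, and the fact that $\E[\ttr_\phi^\gamma]<1$ for small $\gamma>0$ (which follows from $\E[\log\ttr_\phi]<0$ together with the exponential moment).

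For (ii), however, there is a genuine gap. You correctly diagnose that the relation $g(r)\leq \E[g(r/\ttr_{\Phi_n})]$ compares $g(r)$ to $g$ at \emph{larger} scales and is therefore useless, and none of your three attempted repairs closes it. The final cylinder argument hinges on the claim that an interval of length $r$ meets only boundedly many level-$n$ cylinders of comparable diameter; this holds under separation hypotheses such as the open set condition, but is false for general self-similar measures, where cylinders can overlap arbitrarily heavily. More decisively, nowhere in your argument for (ii) do you invoke the hypothesis that $\supp\lambda$ has no common fixed point. This hypothesis is indispensable: if all maps in $\supp\lambda$ fix a common point $p$ then $\sigma=\delta_p$, the interval $[p-r,p+r]$ meets \emph{every} cylinder (each of them has small mass, but there are $\abs{\supp\lambda}^n$ of them), and (ii) is simply false. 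Any valid proof of (ii) must exploit the separation of fixed points inside the semigroup generated by $\supp\lambda$; this is precisely what the Aoun--Guivarc'h argument does, via a spectral/cocycle analysis of the associated random walk, and it is not the routine wrinkle you suggest at the end.
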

\begin{proof}
Item (i) follows from Kloeckner \cite[Theorem 3.1]{Kloeck22}  and item (ii) is a consequence of \cite[Theorem 2.12]{AG20} due to Aoun and Guivarc'h.
If one is only interested in self-similar measures arising from finitely supported contractive IFS's, then (i) is trivial because $\sigma$ has compact support in this case, and a short proof of item (ii) can be found in a work of Feng--Lau~\cite[Proposition 2.2]{FengLau}.
\end{proof}

Given an integer $n \in \N$, denote by $\sigma^{(n)}$ the image measure of $\mu^{*n}$ under the map $g \in P \mapsto \ttb_g \in \R$.
Equivalently, $\sigma^{(n)}={\lambda^{*n}}*\delta_{0}$, where $\delta_0$ denotes the Dirac measure at $0\in \mathbb{R}$. 
We show that the measures $\sigma^{(n)}$ have a uniformly finite positive moment, and uniform positive dimension  above an exponentially small scale.  For this, we first observe that $\sigma^{(n)}$ converges toward $\sigma$ at exponential rate. We denote by $\Lip(\R)$ the space of bounded Lipschitz functions on $\R$ with the norm 
$ \|f\|_{\Lip}=\|f\|_{\infty}+\sup_{s\neq t}\frac{|f(s)-f(t)|}{|s-t|}$.

\begin{lemma} \label{eq-expconv}
There exists $\eps>0$ such that for all $n\geq 0$, all $f \in \Lip(\R)$, we have
\[|\sigma^{(n)}(f)-\sigma(f)| \ll e^{-\eps n} \|f\|_{\Lip}.\]
\end{lemma}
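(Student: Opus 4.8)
The statement asserts exponential convergence $\sigma^{(n)} \to \sigma$ in the dual of $\Lip(\R)$. The natural approach is a telescoping/contraction argument exploiting the stationarity $\sigma = \lambda * \sigma$ together with the contraction-in-average hypothesis \eqref{eq-contractive-average}. Recall $\sigma^{(n)} = \lambda^{*n} * \delta_0$ while $\sigma = \lambda^{*n} * \sigma$, so for $f \in \Lip(\R)$ one writes
\[
\sigma^{(n)}(f) - \sigma(f) = \int_{\Aff(\R)} \bigl( f(\phi(0)) - (\phi_\star \sigma)(f) \bigr) \dd\lambda^{*n}(\phi)
= \int_{\Aff(\R)} \int_{\R} \bigl( f(\phi(0)) - f(\phi(s)) \bigr) \dd\sigma(s) \dd\lambda^{*n}(\phi).
\]
Since $|f(\phi(0)) - f(\phi(s))| \le \|f\|_{\Lip}\, |\phi(0) - \phi(s)| = \|f\|_{\Lip}\, \ttr_\phi\, |s|$, it remains to bound $\int_{\Aff(\R)} \ttr_\phi \dd\lambda^{*n}(\phi)$ times $\int_{\R} |s| \dd\sigma(s)$. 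But $\sigma$ only has a finite moment of \emph{some} positive order $\gamma > 0$ (Lemma \ref{sigma Holder}(i)), not necessarily order $1$, so one should instead bound $|f(\phi(0)) - f(\phi(s))|$ by $2\|f\|_\infty \le 2\|f\|_{\Lip}$ when $\ttr_\phi|s|\ge 1$ and by $\|f\|_{\Lip}\,\ttr_\phi|s|$ otherwise; combining, $|f(\phi(0)) - f(\phi(s))| \le 2\|f\|_{\Lip} (\ttr_\phi |s|)^{\gamma}$ for a suitable $\gamma \in (0,1]$. This reduces everything to showing $\int_{\Aff(\R)} \ttr_\phi^{\gamma} \dd\lambda^{*n}(\phi) \ll e^{-\eps n}$ for small enough $\gamma$.

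\textbf{Key step: exponential decay of the $\gamma$-moment of the contraction ratio.} Because $\ttr$ is multiplicative under composition, $\int \ttr_\phi^{\gamma} \dd\lambda^{*n}(\phi) = \bigl(\int_{P} \ttr_g^{\gamma} \dd\mu(g)\bigr)^{n}$. Set $\Phi(\gamma) := \int_P \ttr_g^\gamma \dd\mu(g)$, which is finite for $\gamma \le \eps_0$ by the finite exponential moment assumption \eqref{def-expmoment-lambda}. We have $\Phi(0) = 1$, and $\Phi$ is differentiable on $[0,\eps_0)$ with $\Phi'(0) = \int_P \log\ttr_g \dd\mu(g) = -\ell < 0$ by \eqref{def-Lyap} and \eqref{eq-contractive-average} (differentiation under the integral sign is justified by dominating $\gamma \mapsto \ttr_g^\gamma$ near $0$ using the exponential moment). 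Hence there exists $\gamma_1 \in (0, \eps_0]$ with $\Phi(\gamma_1) < 1$; fix such a $\gamma_1$, also shrinking it so that $\gamma_1 \le \gamma$ from Lemma \ref{sigma Holder}(i). Then $\int \ttr_\phi^{\gamma_1} \dd\lambda^{*n}(\phi) = \Phi(\gamma_1)^n = e^{-\eps n}$ with $\eps := -\log\Phi(\gamma_1) > 0$, and the chain of inequalities above yields
\[
|\sigma^{(n)}(f) - \sigma(f)| \le 2\|f\|_{\Lip} \Bigl( \int_{\R} |s|^{\gamma_1} \dd\sigma(s) \Bigr) \Phi(\gamma_1)^{n} \ll e^{-\eps n} \|f\|_{\Lip},
\]
using $\int |s|^{\gamma_1} \dd\sigma < \infty$ (Lemma \ref{sigma Holder}(i), since $\gamma_1 \le \gamma$).

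\textbf{Main obstacle.} The computation is essentially routine; the only genuinely delicate point is the interplay between the moment order of $\sigma$ and the exponent at which $\ttr_g^\gamma$ is $\mu$-integrable with $\Phi(\gamma) < 1$ — one must pick $\gamma$ small enough to satisfy \emph{both} constraints simultaneously, and verify that the sub-additivity trick $|f(a)-f(b)| \le 2\|f\|_{\Lip}\min(1,|a-b|)^\gamma$ is valid for all $\gamma \in (0,1]$ (it is, since $\min(1,x)^\gamma \ge \min(1,x)$). A secondary technical point is justifying $\Phi'(0) = -\ell$ by dominated convergence near $\gamma = 0$, which again uses \eqref{def-expmoment-lambda}. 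No further input beyond Lemma \ref{sigma Holder} and the standing assumptions on $\lambda$ is needed.
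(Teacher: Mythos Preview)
Your proof is correct and starts from exactly the same identity as the paper: both write
\[
\sigma^{(n)}(f)-\sigma(f)=\int_{\Aff(\R)^+}\int_{\R}\bigl(f(\phi(0))-f(\phi(s))\bigr)\dd\sigma(s)\dd\lambda^{*n}(\phi)
\]
and bound the integrand by $\|f\|_{\Lip}\min(2,\ttr_\phi|s|)$. From there the routes diverge. The paper splits the domain of integration, using the large-deviation estimate $\lambda^{*n}\{\ttr_\phi>e^{-\ell n/2}\}\ll e^{-\eps n}$ and the tail bound $\sigma\{|s|>e^{\ell n/4}\}\ll e^{-\eps n}$ from \Cref{sigma Holder}(i), and handles the exceptional sets with the cap $2$. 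You instead use the pointwise inequality $\min(1,x)\le x^{\gamma_1}$ to \emph{factorise} the integral, then exploit multiplicativity of $\ttr$ to get $\int\ttr_\phi^{\gamma_1}\dd\lambda^{*n}=\Phi(\gamma_1)^n$ with $\Phi(\gamma_1)<1$ via $\Phi'(0)=-\ell<0$. Your argument is a bit more direct and gives the rate $\eps=-\log\Phi(\gamma_1)$ explicitly; the paper's decomposition avoids differentiating under the integral sign and is perhaps more in keeping with the large-deviation language used elsewhere in the paper. Both ultimately rest on the same ingredients (exponential moment of $\lambda$, contraction in average, finite positive moment of $\sigma$), so the difference is one of packaging rather than substance.
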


\begin{proof}
We may assume $ \|f\|_{\Lip}=1$. Then we have, 
\begin{align} \label{eq-expconv1}
|\sigma^{(n)}(f)-\sigma(f)| &= |\lambda^{*n}*\delta_{0}(f)-\lambda^{*n}*\sigma(f) \nonumber|\\
& \leq \int_{\Aff(\R)^+\times \R}|f(\phi(0))-f(\phi(s))| \dd(\lambda^{*n}\otimes\sigma)(\phi,s)  \nonumber\\
& \leq \int_{\Aff(\R)^+\times \R} \min(2, \, \ttr_{\phi} |s|) \dd(\lambda^{*n}\otimes\sigma)(\phi,s) ,
\end{align}
where $\ttr_{\phi}>0$ denotes the dilation factor in the affine map $\phi$, see \eqref{def-rphi-bphi}.
Using the principle of large deviations and that $\lambda$ is contracting in average \eqref{eq-contractive-average}, we have for $\eps\lll 1$, 
\begin{align} \label{eq-expconv2}
\lambda^{*n}\set{\phi\,:\,\ttr_{\phi} >e^{-\Lyap n/2}} \ll e^{-\eps n}. 
\end{align}
On the other hand, up to taking smaller $\eps$, \Cref{sigma Holder}(i) guarantees that 
\begin{align} \label{eq-expconv3}
\sigma \set{s\,:\,|s|> e^{\Lyap n/4}} \ll e^{-\eps n}. 
\end{align}
The claim follows from the combination of \eqref{eq-expconv1}, \eqref{eq-expconv2}, \eqref{eq-expconv3}.
\end{proof}

We now deduce our claim on the measures $\sigma^{(n)}$.
\begin{lemma}[Moment and H\"older-regularity of $\sigma^{(n)}$] \label{posdim-sigma}
There exists $\gamma>0$ such that 
\[
(i)\quad \sup_{n\geq 1}\int_{\R} |s|^\gamma \dd\sigma^{(n)}(s)<\infty
\]
and
\[
(ii)\quad \forall n\geq 1,\, \forall r>e^{-n}, \quad \sup_{s\in \R} \sigma^{(n)}(s+[-r, r]) \ll r^\gamma.
\]
\end{lemma}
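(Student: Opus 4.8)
The plan is to deduce both items from the corresponding properties of the limit measure $\sigma$ (Lemma \ref{sigma Holder}) together with the exponential convergence $\sigma^{(n)} \to \sigma$ (Lemma \ref{eq-expconv}), plus a crude large-deviation control on the random walk that produces $\sigma^{(n)} = \lambda^{*n} * \delta_0$.

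For item (i), I would write $\ttb_\phi$ for $\phi = \phi_n \circ \dotsm \circ \phi_1$ with the $\phi_i$ i.i.d.\ of law $\lambda$, so that $\sigma^{(n)}$ is the law of $\ttb_\phi = \phi(0)$. Expanding the composition, $\phi(0) = \sum_{k=1}^{n} \ttr_{\phi_n} \dotsm \ttr_{\phi_{k+1}} \ttb_{\phi_k}$. Choosing $\gamma>0$ small enough that the finite exponential moment \eqref{def-expmoment-lambda} holds at exponent $\gamma$ and such that $\E[\ttr_\phi^{\gamma/2}] = e^{-\delta} < 1$ (possible by \eqref{eq-contractive-average} and continuity of $\gamma \mapsto \E[\ttr_\phi^\gamma]$ near $0$, shrinking $\gamma$ if necessary), I bound $\E[\,|\phi(0)|^{\gamma}\,]$ using subadditivity of $t \mapsto t^{\gamma}$ for $\gamma \le 1$: $\E[\,|\phi(0)|^\gamma\,] \le \sum_{k=1}^n \E[(\ttr_{\phi_n}\dotsm\ttr_{\phi_{k+1}})^\gamma]\,\E[\,|\ttb_{\phi_k}|^\gamma\,]$, and the first factor is $e^{-\delta'(n-k)}$ for a suitable $\delta'>0$ by independence, so the sum is bounded by a geometric series uniformly in $n$. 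This gives (i).

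For item (ii), fix $n \ge 1$ and $r > e^{-n}$. Pick a Lipschitz bump function $f = f_{s,r}$ with $\1_{[s-r,s+r]} \le f \le \1_{[s-2r,s+2r]}$ and $\|f\|_{\mathrm{Lip}} \ll 1/r$. Then $\sigma^{(n)}(s+[-r,r]) \le \sigma^{(n)}(f) \le \sigma(f) + |\sigma^{(n)}(f) - \sigma(f)|$. By Lemma \ref{sigma Holder}(ii), $\sigma(f) \le \sigma(s + [-2r,2r]) \ll r^{\gamma_0}$ for the exponent $\gamma_0$ furnished there. By Lemma \ref{eq-expconv}, $|\sigma^{(n)}(f) - \sigma(f)| \ll e^{-\eps n}\|f\|_{\mathrm{Lip}} \ll e^{-\eps n}/r$. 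Since $r > e^{-n}$, we have $e^{-\eps n}/r < e^{-\eps n} e^{n} $, which is not yet small; the point is rather to compare $e^{-\eps n}$ with $r$: writing $r = e^{-\theta n}$ with $0 < \theta < 1$, the error term is $e^{-(\eps - \theta) n + }$... — more carefully, I split into two regimes. If $r \ge e^{-\eps n/2}$, then $e^{-\eps n}/r \le e^{-\eps n/2} \le r$, so the error is $\ll r \le r^{\gamma}$ after setting $\gamma := \min(\gamma_0, 1)$. If $e^{-n} < r < e^{-\eps n/2}$, then $r^\gamma$ need not dominate; but in this regime $n < \frac{2}{\eps}|\log r|$, so $e^{-\eps n} \le$ is still not directly $\le r^{1+\gamma}$. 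Instead, in this second regime I use that $n \ge |\log r|$ forces $e^{-\eps n} \le r^{\eps}$, hence $e^{-\eps n}/r \le r^{\eps - 1}$, which blows up — so this naive split fails and I must instead absorb $1/r$ differently: take $f$ with $\|f\|_{\mathrm{Lip}}\ll 1/r$ but apply Lemma \ref{eq-expconv} only for $n$ large relative to $|\log r|$ and handle small $n$ by a trivial bound. Concretely, set $\gamma := \min(\gamma_0,1)/2$; if $n \ge \frac{4}{\eps}|\log r|$ then $e^{-\eps n}/r \le e^{-\eps n /2} \le r^{2} \le r^\gamma$ and we conclude as above; if $n < \frac{4}{\eps}|\log r|$, then since also $r > e^{-n}$ we get $|\log r| < n < \frac{4}{\eps}|\log r|$, i.e.\ $n \simeq |\log r|$, and we bound $\sigma^{(n)}(s+[-r,r])$ directly: such $\sigma^{(n)}$ is a finite average of pushforwards $\phi_\star\delta_0$ over length-$n$ words, but more usefully it equals $\lambda*\sigma^{(n-1)}$, and one runs an induction on $n$ with the base case $\sigma^{(0)}=\delta_0$ — here one uses that with overwhelming probability $\ttr_{\phi_n} \le e^{\Lyap n}$ (large deviations) so a ball of radius $r$ pulls back under $\phi_n$ to a ball of radius $\le r e^{\Lyap n}$, still controllable when $r \simeq e^{-n}$ and $n$ is a fixed multiple of $|\log r|$, giving a bound $\ll r^{\gamma'}$ for some $\gamma' = \gamma'(\Lyap,\eps) > 0$; taking $\gamma$ smaller than both exponents finishes the proof.

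The main obstacle is item (ii) in the regime where $r$ is only barely larger than $e^{-n}$: there the quantitative-convergence input (Lemma \ref{eq-expconv}) is too weak because its Lipschitz-norm cost $1/r \approx e^{n}$ swamps the gain $e^{-\eps n}$, so one genuinely needs a separate self-contained argument for the near-threshold scales, propagating Hölder regularity through the one-step recursion $\sigma^{(n)} = \lambda * \sigma^{(n-1)}$ and controlling the (rare) expanding maps via a large-deviation estimate for $\log \ttr_\phi$ — this is where the hypothesis $r > e^{-n}$ is used in an essential way.
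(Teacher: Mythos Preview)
Your argument for (i) is correct and in fact more direct than the paper's. The paper instead controls the tail $\sigma^{(n)}\{|s| \geq t\}$ by splitting into $t \leq e^{Rn}$ (handled via Lemma~\ref{eq-expconv} and Lemma~\ref{sigma Holder}(i)) and $t > e^{Rn}$ (handled by a crude Markov bound coming from the exponential moment of $\lambda$). Your geometric-series bound via subadditivity of $t \mapsto t^\gamma$ together with $\E[\ttr_\phi^\gamma] < 1$ for small $\gamma$ is shorter and self-contained.

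For (ii), your treatment of the regime $r \geq e^{-\eps n/2}$ is exactly right and matches the paper: the Lipschitz bump gives error $e^{-\eps n}/r \leq e^{-\eps n/2} \leq r \leq r^\gamma$. The gap is in the near-threshold regime $e^{-n} < r < e^{-\eps n/2}$, where you miss a one-line observation and instead propose an induction that is both unnecessary and does not work as written. The paper's fix is simply monotonicity in $r$: since $[s-r,s+r] \subset [s - e^{-\eps n/2}, s + e^{-\eps n/2}]$, the bound already established at scale $e^{-\eps n/2}$ gives
\[
\sigma^{(n)}(s+[-r,r]) \ll (e^{-\eps n/2})^\gamma = e^{-\gamma\eps n/2} < r^{\gamma\eps/2},
\]
the last inequality because $r > e^{-n}$. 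Replacing $\gamma$ by $\gamma\eps/2$ then covers all scales above $e^{-n}$. Your proposed induction via $\sigma^{(n)} = \lambda * \sigma^{(n-1)}$ runs into a genuine obstacle: pulling back a ball of radius $r$ through a single step $\phi$ gives a ball of radius $r/\ttr_\phi$, and applying the inductive hypothesis introduces a factor $\E[\ttr_\phi^{-\gamma}]$, which is \emph{greater} than $1$ (since $\E[\log \ttr_\phi] < 0$), so iterating produces an exponentially growing constant rather than a uniform bound. Note also that your ``Regime 2'' condition $n \geq \frac{4}{\eps}|\log r|$ is vacuous once you are already in the range $r < e^{-\eps n/2}$, so the case split there does not actually reduce the problem.
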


\begin{proof} Fix $\gamma, \eps\in(0,1)$ as in \Cref{eq-expconv}. 

For (i), we need to check that the map $t\mapsto \sup_{n}\sigma^{(n)}\set{s : |s| \geq t}$ has polynomial decay as $t\to +\infty$.
Given $t > 2$, \Cref{eq-expconv} and \Cref{sigma Holder}(i) imply that for every $n\geq 0$, one has 
\[\sigma^{(n)}\set{s\,:\,|s| \geq t} \leq \sigma \set{ s \,:\,|s| \geq t - 1 } +O(e^{-\eps n}) \ll t^{-\gamma} + e^{-\frac{\eps}{2} n}.\]
Let $R>0$ be a parameter. The above justifies that uniformly in $n$, we have polynomial decays of tail probabilities of $\sigma^{(n)}$ for $t\leq e^{Rn}$. Taking $R \ggg 1$, the exponential moment assumption on $\lambda$ takes care of the case $t> e^{Rn}$, using the observation
\[\sigma^{(n)}\set{s\,:\,|s| \geq t} \leq \lambda^{\otimes n} \Bigl\{ (\phi_{1}, \dots, \phi_{n}) \,:\, n\prod_{k=1}^n \max(1,\ttr_{\phi_k}, |\ttb_{\phi_k}|) \geq t \Bigr\}\]
and the Markov inequality. This justifies (i), with a potentially smaller value of $\gamma$.

Let us check (ii). For $n\geq 0$, $s\in \R$ and $r\geq e^{-\frac{\eps}{2} n}$, \Cref{eq-expconv} guarantees 
\[\sigma^{(n)}([s-r, s+r])\leq \sigma([s-2r, s+2r])+O(e^{-\frac{\eps}{2} n})\ll r^\gamma,\]
whence the claim (with $\frac{\eps}{2}\gamma$ in place of $\gamma$ to treat all scales above $e^{-n}$). 
\end{proof}

Finally, we derive from \Cref{posdim-sigma} that $\sigma^{(n)}$  satisfies a non-concentration estimate with respect to polynomials of degree $2$. 

\begin{lemma}[Regularity of $\sigma^{(n)}$ for quadratic polynomials] \label{computing-non-concentration}
There exists $\gamma>0$ such that for every $n\geq 1$,  $r>e^{-n}$ and  $(a, b, c) \in \R^3$ with $\max(|a|,|b|, |c|)\geq 1$, we have 
\[\sigma^{(n)}\{s: |as^2+bs+c|\leq r\}\ll r^{\gamma}.\]
\end{lemma}

\begin{proof} We may suppose $r\in (0, 1/10)$. 

Assume first $\max(|a|,|b|) <r^{1/8}$. We must have $|c|\geq  1$, so the inequality $ |as^2+bs+c|\leq r$ implies 
$$ |as^2+bs| \geq 1/2$$
and the claim follows by \Cref{posdim-sigma} (i).

Assume now   $\max(|a|,|b|) \geq r^{1/8}$. We first check that the set $E:=\{s \in [-r^{-1/4},r^{-1/4}]\,:\,|as^2+bs+c| \leq r\}$ is included in at most two balls of radius $8 r^{1/8}$. Indeed, 
if $s_{1}, s_{2}\in E$, then $|as_{1}^2+bs_{1}-as_{2}^2 - bs_{2}|\leq 2r$, i.e. 
\[\abs{(s_{1}-s_{2})(b +a(s_{1}+s_{2}))} \leq 2r.\]
Then either $|s_{1}-s_{2}|\leq 2r^{1/2}$ or $|b +a(s_{1}+s_{2})| \leq r^{1/2}$. In the second case, the condition $\max(|a|,|b|)\geq r^{1/8}$ forces $|a|\geq r^{3/8}/4$, then $s_{1}$ belongs to the ball of radius $8 r^{1/8}$ and center $(-a^{-1}b-s_{2})$, hence the claim about $E$. From there, the lemma follows using \Cref{posdim-sigma} (i), (ii).
\end{proof}

\subsection{Recurrence of the random walk} \label{Sec-recurrence}

We recall the following result of non-escape of mass for the $\mu$-walk on $X$.
\begin{proposition}[Effective recurrence on $X$] \label{effective-recurrence} There exist constants $c, c'>0$ depending on $\mu$ only such that for every $x\in X$, $n \in \N$, and $\rho>0$, we have 
\[\mu^{*n}*\delta_{x} \{ \inj < \rho \}\ll (\inj(x)^{-c} e^{- c'n} +1)\rho^c. \]
\end{proposition}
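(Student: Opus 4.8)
\textbf{Proof strategy for Proposition \ref{effective-recurrence}.}

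The plan is to establish the existence of a proper function (a ``height'' or ``Lyapunov'' function) $\beta : X \to [1, \infty)$ comparable to a negative power of the injectivity radius, for which the averaging operator $A_\mu : \beta \mapsto \int_G \beta(g\,\cdot)\dd\mu(g)$ contracts $\beta$ up to an additive constant, i.e.\ satisfies a Margulis-type inequality
\[
A_\mu \beta \;\leq\; b\,\beta \,+\, B
\]
for constants $b \in (0,1)$ and $B > 0$ depending only on $\mu$. The classical template here is Eskin--Margulis \cite{EM04} (and its predecessors in Kleinbock--Margulis and Benoist--Quint), so I would first recall that for a measure $\mu$ on $G = \SL_2(\R)$ with a finite exponential moment, one can build $\beta$ out of $\alpha$-th powers of the reciprocal injectivity radius, $\beta \simeq \inj^{-\alpha}$ on the cusp, with $\alpha = c$ chosen small enough that the top-exponent expansion coming from $\int \log\|g e_1\|\dd\mu > 0$ (equivalently the contraction-in-average \eqref{eq-contractive-average} governing the Lyapunov exponent $\ell$) beats the multiplicative distortion. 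The subtlety specific to our situation is that $\mu$ is supported on the \emph{solvable} group $P$ rather than being Zariski-dense; but since $\supp\mu$ is not simultaneously diagonalisable, the $\mu$-walk on the projective line has a unique stationary measure with no atoms and a positive drift $\int\log\|g e_1\|\dd\mu > 0$, which is exactly what is needed to run the Eskin--Margulis contraction argument in the thin part of $X$ — the relevant cusp neighbourhoods of $\SL_2(\R)/\Lambda$ are parametrised by short vectors, and $\|g v\|$ contracts in average along the walk.

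Granting the Margulis inequality, I would iterate it: $A_\mu^n \beta \leq b^n \beta + B/(1-b)$, which at a point $x$ reads $\mu^{*n}*\delta_x(\beta) \leq b^n \beta(x) + B/(1-b) \ll \inj(x)^{-c} e^{-c' n} + 1$ after renaming constants ($b^n = e^{-c' n}$). Then Chebyshev/Markov converts the moment bound into the tail bound: on the set $\{\inj < \rho\}$ one has $\beta \geq \rho^{-c}$ (up to a constant), so
\[
\mu^{*n}*\delta_x\{\inj < \rho\} \;\leq\; \rho^{c}\,\mu^{*n}*\delta_x(\beta) \;\ll\; \bigl(\inj^{-c}(x)\,e^{-c'n} + 1\bigr)\rho^{c},
\]
which is the claimed estimate. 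One should take care that $\beta$ is genuinely bounded below by a positive constant on the compact part (hence the ``$+1$''), and that the exponents in ``$\inj^{-c}$'' inside and outside match — this only requires choosing a single $c$ at the outset and being slightly generous, as permitted by the $\ll$ convention.

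The main obstacle I expect is \emph{constructing} $\beta$ and verifying the contraction inequality in the solvable setting: one cannot simply cite a Zariski-density result, and one must check that the finite exponential moment \eqref{def-expmoment-mu} (which, on $P$, unpacks to the moment condition on $\ttr_g, \ttr_g^{-1}, \ttb_g$) is strong enough to control the distortion term near the cusp. Concretely, the height function is built from $\max$ over the (finitely many) cusps of $\|g w\|^{-\alpha}$ for $w$ a primitive short vector, and one must bound $\int \|g w\|^{-\alpha} \dd\mu(g)$ by $b \|w\|^{-\alpha}$ plus an error controlled uniformly — the delicate case being when $w$ lies near the expanding direction $e_1$, where one exploits non-simultaneous-diagonalisability to ensure the walk does not keep $w$ pinned there. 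Since this is precisely the content of the Eskin--Margulis machinery specialised to rank one, I would present it as a lemma with the bulk of the argument either cited or sketched, and devote the proof of the proposition proper to the (short) iteration-and-Chebyshev step above. Alternatively — and this may be the cleanest route — one can deduce the proposition directly from \cite{EM04} or \cite{BQ12} applied to $\mu$, noting that their hypotheses reduce in $\SL_2$ to exactly ``finite exponential moment'' plus ``$\supp\mu$ not contained in a compact-by-(diagonal) subgroup'', both of which we have assumed.
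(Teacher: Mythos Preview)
Your overall strategy — build a Margulis/Lyapunov height function comparable to $\inj^{-c}$, prove a contraction inequality, iterate, then apply Markov — is exactly the route the paper takes, and you correctly flag the key difficulty (solvable support, so one must exploit non-simultaneous-diagonalisability rather than Zariski density). Two points worth sharpening. First, the paper does \emph{not} prove one-step contraction $A_\mu\beta \le b\beta + B$: instead it shows $A_{\mu^{*k}}\beta \le r\beta$ in the cusp for some large $k$, because the non-concentration needed to keep the short vector from staying near the contracting direction only becomes visible after several steps — concretely, the paper uses the H\"older regularity of $\sigma^{(k)}$ (\Cref{posdim-sigma}) at scales $\geq e^{-k}$, which has no analogue at step one. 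Second, your ``cleanest route'' of citing \cite{EM04} or \cite{BQ12} does not work as stated: both of those results require the Zariski closure of the group generated by $\supp\mu$ to be semisimple, whereas here it is solvable (contained in $P$), so their hypotheses are genuinely violated, not merely restated. The reference that does cover the solvable case is \cite{PSS23}, which the paper cites before giving its self-contained proof.
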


 For  walks on homogeneous spaces, results of this type originate from the work of Eskin-Margulis-Mozes \cite{EMM98} on the quantitative Oppenheim conjecture. They are now understood in the context of semisimple random walks \cite{EM04, BQfv, BS22}, and more generally expanding random walks \cite{PSS23}.
\Cref{effective-recurrence} can be regarded as a consequence of \cite[Proposition 3.3 and Theorem 6.1]{PSS23} combined with some well-known arguments.

In this subsection,  we give a self-contained and more direct proof of \Cref{effective-recurrence}.

\begin{lemma}[Zassenhaus neighborhood]\label{lm:Margulis}
There exists an absolute constant $\eta > 0$ such that for every discrete subgroup $\Lambda' \subset G$, the intersection $B_\eta \cap \Lambda'$ generates a cyclic group.
\end{lemma}
Recall a group is \emph{cyclic} if it is generated by a single element.

\begin{proof} Let $\eta>0$.  Let $g,h\in B_{\eta}$ with $g\neq \Id$. Provided $\eta \lll 1$, we can write $g=\exp(v)$, $h=\exp(w)$ for some $v,w\in B^{\kg}_{2\eta}$ and the Baker-Campbell-Hausdorff formula gives 
\begin{equation*}\label{eq:BCH}
g h g^{-1} h^{-1} = \exp([v,w] +z)
\end{equation*}
where $z\in \kg$  satisfies $\|z\|\ll \eta \|[v,w]\|$. This implies that for $\eta\lll1$, 
\begin{align}
&\dist(g h g^{-1} h^{-1}, \Id) \ll \|v\|\|w\| <  \dist(g, \Id), \label{eq:ghgh}\\
&g h = hg \iff [v,w]=0 \iff w\in \R v. \label{eq:gh=gh}
\end{align}
where the second equivalence in \eqref{eq:gh=gh} is a straightforward computation in $\kg$.

Now let us check that $B_\eta \cap \Lambda'$ generates a cyclic group. Clearly we may assume $B_\eta \cap \Lambda' \neq \{ \Id \}$.
Then by discreteness, we may consider  an element $\gamma = \exp(v) \in B_\eta \cap \Lambda' \setminus \{\Id\}$ minimizing $\dist(\gamma, \Id)$.
By \eqref{eq:ghgh}, for any $h \in B_\eta \cap \Lambda'$, the commutator $\gamma h \gamma^{-1} h^{-1} \in \Lambda'$ is closer to $\Id$ than $\gamma$, hence it must be $\Id$
by  minimality of $\gamma$.
By \eqref{eq:gh=gh}, we infer $h = \exp(t v)$ for some $t \in \R$. If $t\notin \Z$, then $\Lambda'$ contains  an element of the form $\exp(s v)$ where $s\in (0, 1/2]$ which contradicts the minimality of $\gamma$ (say for $\eta\lll1$). 
Therefore $t \in \Z$ and this finishes the proof.
\end{proof}

Using \Cref{lm:Margulis},  we show that for small $c>0$, the function $\inj^{-c} \colon X \to \R_{>0}$ is uniformly contracted under the random walk. Standard  terminology  then qualifies $\inj^{-c}$ as a Margulis function.
 
\begin{lemma}\label{lm:CH}
For  $c \lll 1$, there exist $m \in \N_{> 0}$, $a \in (0,1)$ and $b \in \R_{> 0}$ such that
\[
\forall x \in X,\quad \mu^{*m}*\delta_x (\inj^{-c}) \leq a \inj^{-c}(x) + b.
\]
\end{lemma}

To prepare the proof, we introduce for every parameter $c>0$ the notation
\[
M_c(\mu):=\int_{G} \norm{\Ad(g)}^c \dd \mu(g).
\]
The finite exponential moment assumption on $\mu$ means that $M_c(\mu)$ is finite for $c \lll 1$.

We also observe that for every $g \in G$, the left multiplication on $X$ by $g$ is $\|\Ad(g)\|$-Lipschitz. Using $\|\Ad(g)\|=\|\Ad(g^{-1})\|$,  it follows that: $\forall g\in G, x\in X$,  
\begin{equation}\label{eq:injLip}
  \norm{\Ad(g)}^{-1} \inj(x)\leq \inj(gx) \leq \norm{\Ad(g)} \inj(x).
\end{equation}

\begin{proof}
Let $\eta > 0$ be small enough so that \Cref{lm:Margulis} holds for $B_\eta$ and additionally the logarithm map is well defined and $2$-bi-Lipschitz from $B_\eta$ to a neighborhood of $0$ in $\kg$.
Consider some parameters $c>0$, $m \in \N^*$ and $R > 1$, to be specified later. 

For all $x \in X$ with $\inj(x) \geq R^{-m}\eta/8$, we have by \eqref{eq:injLip} and the submultiplicativity of the norm that
\begin{equation}  \label{contract-b}
\mu^{*m}*\delta_{x}(\inj^{-c}) \leq b := 8^c M_c(\mu)^m R^{mc}\eta^{-c} .
\end{equation}
We will show that for $c\lll1$ and appropriate choices of $m$ and $R$, there is $a \in (0,1)$ such that for all $x \in X$ with $\inj(x) < R^{-m} \eta / 8$, we have
\begin{equation} \label{eq:ainjc}
\mu^{*m}*\delta_{x}(\inj^{-c}) \leq a \inj^{-c}(x).
\end{equation}
Note that \eqref{contract-b} and \eqref{eq:ainjc} together yield the desired contraction property.

We first replace $\inj(x)$ by the norm of a suitable vector in $\kg$. Namely, for every $x = h \Lambda \in X$ with $\inj(x) < \eta/2$, the set
\[
\set{g \in B_\eta \,:\, gx = x} = B_\eta \cap h \Lambda h^{-1}
\]
generates a cyclic group (\Cref{lm:Margulis}).
Let $v_x$ be the logarithm of a generator of this subgroup.
It is uniquely defined up to a minus sign and, using  $\inj(x) < \eta/2$, we have
\[
\frac{1}{4}\norm{v_x} \leq \inj(x) \leq 4\norm{v_x}.
\]

Let $x \in \{ \inj(x)< R^{-m}\eta/8 \}$.
By \eqref{eq:injLip}, we have $\inj(g x) < \eta/8$ whenever 
\[
g \notin E:=\setbig{g\in G \,:\,\norm{\Ad(g)} > R^m}.
\]
We claim that for such $g$,  
\begin{equation} \label{vgx}
v_{gx} = \pm \Ad(g) v_x.
\end{equation}
Indeed, we have
\(
\exp(\Ad(g) v_x) gx = gx
\)
as well as 
\[ 
\dist(\exp(\Ad(g)v_x),\Id) \leq  \norm{\Ad(g) v_x} \leq  \norm{\Ad(g)}\norm{v_x} \leq 8 R^m \inj(x) < \eta.
\]
Hence by the definition of $v_{gx}$, there exists $k \in \Z \setminus \{0\}$ such that
\(
\Ad(g) v_x = k v_{gx}.
\)
Then we also have
\(
\exp(k^{-1}v_x)x = \exp(\Ad(g^{-1}) v_{gx}) x = x,
\)
as well as
\[
\dist(\exp(k^{-1}v_x),\Id) \leq  \norm{k^{-1}v_x} < \eta.
\]
Hence
\(
k^{-1} v_x \in \Z v_x,
\)
then $k \in \{ \pm 1 \}$, yielding \eqref{vgx}.

Recalling the Lyapunov exponent $\Lyap$ from \eqref{def-Lyap}, set
\[
F_{x}:=\setbig{g\in G \,:\,\norm{\Ad(g)v_{x}} <e^{m\Lyap/4}\norm{v_x}}.
\]
Then for every $g \notin E \cup F_x$,
\[
\inj(gx) \geq \frac{\norm{v_{gx}}}{4} = \frac{\norm{ \Ad(g) v_x }}{4} \geq \frac{e^{m\Lyap/4} \norm{v_x}}{4} \geq \frac{e^{m\Lyap/4} \inj(x)}{4^2}.
\]
On the other hand, for $g \in E\cup F_x$, we bound $\inj(gx)$ from below using \eqref{eq:injLip}.
These two lower bounds yield
\begin{equation} \label{eq:fracinjinj}
\frac{\mu^{*m}*\delta_{x}(\inj^{-c})}{\inj^{-c}(x)}
\leq \int_{E\cup F_{x}} \norm{\Ad(g)}^c \dd\mu^{*m}(g) + 4^{2c} e^{-m\Lyap c/4}.
\end{equation}
Using the Cauchy-Schwarz inequality and the submultiplicativity of the norm, we have
\begin{equation} \label{dom-intEF}
\int_{E\cup F_{x}} \norm{\Ad(g)}^{c} \dd\mu^{*m}(g)\leq \mu^{*m}(E\cup F_{x})^{1/2} M_{2c}(\mu)^{m/2},
\end{equation}

We claim that for some $\alpha=\alpha(\mu)>0$, and up to taking parameters $m,R\ggg 1$, we have 
\begin{equation} \label{EF}
\mu^{*m}(E\cup F_{x}) \leq e^{-m\alpha}.
\end{equation}
Note that together with \eqref{eq:fracinjinj} and \eqref{dom-intEF}, this yields the  inequality \eqref{eq:ainjc} with constant 
$a := e^{-m \alpha/2} M_{2c}(\mu)^{m / 2} + 4^{2c} e^{-m\Lyap c/4}$. As desired, we have $a \in (0,1)$ provided $c\lll 1$ and $m\ggg_{c} 1$. 

It remains to show \eqref{EF}.
First,  the Markov inequality yields for all $\eps>0$,
\[\mu^{*m}(E) \leq \left(M_{\eps}(\mu)R^{-\eps}\right)^m\]
whence the claim on $\mu^{*m}(E)$ by choosing  $0<\eps\lll 1$ and  $R\ggg_{\eps} 1$.
We now bound $\mu^{*m}(F_{x})$. Recall the basis $(e_+, e_0, e_-)$ of $\kg$ from \S\ref{conventions}.
Let $e_+^* \colon \kg \to \R$ be the corresponding linear form in the dual basis.
For  $g = a(\ttr^{-1}_g) u(\ttb_g) \in F_x$, we have
\[
\ttr_g^{-1} \abse{e_+^*\bigl( \Ad(u(\ttb_g)) v_x\bigr)} = \abse{e_+^*\bigl( \Ad(g) v_x\bigr)} < e^{m\Lyap/4} \norm{v_x}.
\]
Hence either $\ttr_g > e^{-m\Lyap/2}$ or $\abse{e_+^*\bigl( \Ad(u(\ttb_g)) v_x\bigr)} < e^{-m\Lyap/4} \norm{v_x}$.
By the large deviation principle for $\log \ttr_g$, there is some $\alpha = \alpha(\mu) > 0$ such that
\[
\mu^{*m} \set{g \in G \,:\, \ttr_g > e^{-m\Lyap/2} } \ll e^{- m \alpha}.
\]
It remains to bound
\begin{align*}
    \mu^{*m} \set{g \in G \,:\, \abse{e_+^*\bigl( \Ad(u(\ttb_g)) v_x\bigr)} < e^{-m\Lyap/4} \norm{v_x} }.
\end{align*}
Write $w=v_x/\norm{v_x}= t_{-} e_-+ t_{0}e_0+ t_{+}e_+$ where $t_{-},t_{0},t_{+}\in \R$. Note that for every $s\in \R$, we have 
$$e_+^*(\Ad(u(s)) w) = -t_{-} s^2  - 2t_{0}s+ t_{+},$$
and the variable  $(\ttb_g)_{g\sim \mu^{*m}}$ has law $\sigma^{(m)}$.  Invoking \Cref{computing-non-concentration}, we deduce
\begin{align*}
    \mu^{*m} \set{g \in G \,:\, \abse{e_+^*\bigl( \Ad(u(\ttb_g)) v_x\bigr)} < e^{-m\Lyap/4} \norm{v_x} } \ll e^{- m \alpha}
\end{align*}
up to taking smaller $\alpha=\alpha(\mu)$. This finishes the proof of \eqref{EF}, and of the lemma. 
\end{proof}

 Effective recurrence  now follows from \Cref{lm:CH} and the Markov inequality.

\begin{proof}[Proof of \Cref{effective-recurrence}] Fix parameters $(c, m,a,b)$ as in \Cref{lm:CH} and such that $M_{c}(\mu)<\infty$. Set $b':=b/(1-a)$. 
By iterating the inequality of \Cref{lm:CH}, we obtain  for all $q \in \N$, $x\in X$, 
\[
 \mu^{*q m}*\delta_x (\inj^{-c}) \leq a^{q} \inj^{-c}(x) + b'.
\]
It follows from the Markov inequality that for all $\rho>0$, 
\begin{align} \label{eq-contr-q}
 \mu^{* q m} * \delta_x \{\inj < \rho\} \leq \bigl( a^q \inj(x)^{-c} + b'\bigr)\rho^c .
\end{align}

Now, given $n \in \N$, write $n = q m + k$ with $q \in \N$ and $0 \leq k < m$.
It follows from \eqref{eq-contr-q} and \eqref{eq:injLip} that for all  $x \in X$, $\rho > 0$, 
 \begin{align*}
\mu^{*n} * \delta_x \{ \inj < \rho \} & = \int_G \mu^{* q m} * \delta_{gx} \{ \inj < \rho \} \dd \mu^{* k}(g) \\
& \leq \left(a^q \int_G \inj(gx)^{-c} \dd \mu^{*k}(g) + b'\right) \rho^c \\
& \leq \left(a^q M_c(\mu)^m \inj(x)^{-c} + b'\right) \rho^c.
\end{align*}
This finishes the proof of  effective recurrence.
\end{proof}

\section{Positive dimension} \label{Sec-positive-dimension}

{We show that the $n$-step distribution of the $\mu$-walk starting from a point $x$ acquires positive dimension at an exponential rate, tempered by the possibility that $x$ may be high in the cusp.}

\begin{proposition}[Positive dimension] \label{positive-dimension}
There exists $A, \kappa>0$ such that for every $x\in X$, $\rho>0$,  $n\geq  |\log \rho|+ A |\log \inj(x)|$, we have 
\begin{equation}\label{eq-posdim-c}
 \forall y\in X,\quad \mu^{*n}*\delta_{x}(B_{\rho}y)  \ll \rho^\kappa.
 \end{equation}

\end{proposition}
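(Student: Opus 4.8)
The plan is to leverage the effective recurrence of \Cref{effective-recurrence} together with the Hölder regularity of the finite-time push-forwards $\sigma^{(n)}$ from \Cref{posdim-sigma}, the latter being transported from $\R$ to $X$ through the hyperbolic geometry of $\SL_2(\R)$. Write $h_j=g_j\cdots g_1$ for the $j$-step random product, the $g_i$ being i.i.d.\ of law $\mu$, and set $m:=\lceil\abs{\log\rho}\rceil$, $n_0:=n-m$, so that $n_0\geq A\abs{\log\inj(x)}-1$ by hypothesis. The first $n_0$ steps are used only to reach the thick part of $X$, and the last $m$ steps to spread at scale $\rho$.

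\emph{Step 1: escaping the cusp.} Choosing $A=A(\mu)$ large enough that $\inj^{-c}(x)e^{-c'n_0}\leq 1$ ($c,c'$ as in \Cref{effective-recurrence}), that proposition applied at time $n_0$ with radius $\rho^{\eps_0}$ gives $\mu^{*n_0}*\delta_x(\{\inj<\rho^{\eps_0}\})\ll\rho^{c\eps_0}$. Condition on $z:=h_{n_0}x$, which we may thus assume satisfies $\inj(z)\geq\rho^{\eps_0}$. A union bound over the logarithmically many indices $j\in\{0,\dots,m\}$ (again via \Cref{effective-recurrence}, whose error term $\inj^{-c}(z)e^{-c'j}$ is $\leq\rho^{-c\eps_0}$) shows that off an event of probability $\ll\abs{\log\rho}\,\rho^{c\eps_0}$ the entire trajectory $(h_jz)_{0\leq j\leq m}$ remains in $\{\inj\geq\rho^{2\eps_0}\}$; moreover the $\mu^{*m}$-probability that the $\ttb$-coordinate $\ttb_{h_m}$ of the $m$-step product exceeds $\rho^{-\eps_1}$ in absolute value equals $\sigma^{(m)}(\{\abs{s}>\rho^{-\eps_1}\})\ll\rho^{\eps_1\gamma}$ by \Cref{posdim-sigma}(i). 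All these exceptional events contribute at most $O(\rho^{\kappa_1})$ to $\mu^{*n}*\delta_x(B_\rho y)$ for some $\kappa_1=\kappa_1(\eps_0,\eps_1,\mu)>0$, so it remains to estimate the contribution of the complementary ``core'' event.

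\emph{Step 2: transporting the regularity of $\sigma^{(m)}$.} Since $\mu$ is carried by $P=AU$, the measure $\mu^{*m}*\delta_z$ lives on the orbit $Pz$; writing $h_m=a(\ttr_{h_m})^{-1}u(\ttb_{h_m})$ one has $h_mz=a(\ttr_{h_m})^{-1}u(\ttb_{h_m})z$, with $\ttb_{h_m}$ of law $\sigma^{(m)}$ and, by the large deviation principle for the i.i.d.\ sum $\log\ttr_{h_m}=\sum_{i\le m}\log\ttr_{g_i}$ of mean $-\Lyap<0$, $\log\ttr_{h_m}\in-\Lyap m+[-\eps m,\eps m]$ off an event of probability $O(e^{-c(\eps)m})=O(\rho^{c(\eps)})$. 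On the core event, a neighbourhood of the reached part of $Pz$ lifts isometrically to $G$ (here $\inj\geq\rho^{2\eps_0}\gg\rho$ along the trajectory is used), and since the orbit map $p\mapsto pz$ is a local isometry at scales below the injectivity radius, the event $h_mz\in B_\rho y$ forces $h_m$ into one of a controlled number $N$ of balls $B_{2\rho}h^{(j)}\subset P$, one per ``sheet'' of the core part of $Pz$ through $B_\rho y$. A direct computation with the right-invariant metric ($\dist_G(h,h')<\rho\Rightarrow\abs{\ttb_h-\ttb_{h'}}\lesssim\rho\,\ttr_{h'}$) gives, uniformly over core $z$,
\[
\mu^{*m}*\delta_z(B_\rho y)\;\lesssim\;\sum_{j=1}^{N}\sigma^{(m)}\!\bigl(\ttb_{h^{(j)}}+[-2\rho\,\ttr_{h^{(j)}},\,2\rho\,\ttr_{h^{(j)}}]\bigr)\;+\;\rho^{c(\eps)}.
\]
Because $\ttr_{h^{(j)}}\le e^{-(\Lyap-\eps)m}$ and $\eps<\Lyap$, each interval above, enclosed in one of length $\max(4\rho\,e^{-(\Lyap-\eps)m},e^{-m})$, receives $\sigma^{(m)}$-mass $\ll\rho^{\gamma}$ by \Cref{posdim-sigma}(ii) (using $m\geq\abs{\log\rho}$). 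Hence $\mu^{*m}*\delta_z(B_\rho y)\ll N\rho^{\gamma}+\rho^{c(\eps)}$ uniformly over core $z$; integrating in $z$ and adding the exceptional probabilities of Step~1 yields $\mu^{*n}*\delta_x(B_\rho y)\ll N\rho^{\gamma}+\rho^{c(\eps)}+\rho^{\kappa_1}$, where $N$ now denotes the maximal number of sheets over $y$ and core $z$. It thus suffices to establish $N\ll\rho^{-\gamma/2}$, after which $\kappa:=\tfrac12\min(\gamma,\,c(\eps),\,\kappa_1)$ works.

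\emph{The main obstacle.} Everything reduces to bounding $N$ by $\rho^{-\gamma/2}$ — equivalently, to counting the $\gamma\in\Lambda$ for which $g_y\gamma g_z^{-1}$ lies within $\rho$ of the core part of $P$ reached by the walk, which is a long, thin, $\rho^{2\eps_0}$-unfolded surface whose area is polynomial in $\rho^{-1}$ with an exponent governed by $\ttr_{h^{(j)}}^{-1}\asymp e^{\Lyap m}$. This is a lattice-point count in a $\rho$-neighbourhood of a codimension-one set, and it is precisely here that the trajectory being thick throughout the last $m$ steps, the truncation $\abs{\ttb_{h_m}}\le\rho^{-\eps_1}$, and the cusp geometry of $X$ must be used with care; balancing $\eps,\eps_0,\eps_1$ against $\Lyap$ and $\gamma$ is the crux, and I expect this — rather than the transport of regularity — to be the delicate part. (The hypothesis that $\supp\mu$ is not simultaneously diagonalisable enters only indirectly, as it is what makes $\sigma$, and hence each $\sigma^{(m)}$, positive-dimensional in \Cref{sigma Holder}.) Finally, for a general lattice $\Lambda$ one reduces to $\Lambda=\SL_2(\Z)$ exactly as in the proof of \Cref{effective-recurrence}: outside a fixed compact set $X$ is isometric to a neighbourhood of the cusp of $\SL_2(\R)/\SL_2(\Z)$, so both the cusp reduction and the geometric count carry over.
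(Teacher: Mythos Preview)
Your Step~1 (using recurrence to reach the thick part) is fine, and you have correctly isolated the two ingredients: effective recurrence and the Hölder regularity of $\sigma^{(m)}$. The gap is exactly where you locate it, and it is a real one. You reduce everything to the bound $N \ll \rho^{-\gamma/2}$ on the number of sheets of the core piece of $Pz$ through $B_\rho y$, but you give no argument for it. The core region of $P$ has extent of order $\rho^{-\Lyap-\eps_1}$ in the expanding direction (after writing $a(\ttr)^{-1}u(\ttb)=u(\ttr^{-1}\ttb)a(\ttr)^{-1}$ with $\ttr\approx\rho^{\Lyap}$), so a crude lattice-point count in its $\rho$-neighbourhood gives nothing better than $N\ll\rho^{-\Lyap+O(1)}$; for $\mu$ with large Lyapunov exponent this swamps the gain $\rho^\gamma$ from \Cref{posdim-sigma}(ii). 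Worse, bounding $N$ by a small power of $\rho^{-1}$ is itself a quantitative non-concentration statement for the $P$-orbit of $z$ in $X$, which is essentially what you are trying to prove.

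The paper sidesteps the sheet count entirely by arguing by contradiction and reversing the logic. Assuming $\mu^{*m}*\delta_z(B_\rho y)\geq\rho^{2\kappa}$ for some $z$ in the support of $\mu^{*(n-m)}*\delta_x$ (with $m=\lfloor\alpha|\log\rho|\rfloor$, $\alpha$ small so that $\|\Ad(h)\|^m\leq\rho^{-1/2}$ on $\supp\mu$), one pigeonholes in the $(\ttr_g,\ttb_g)$-coordinates to find a box of side $\rho^{C\kappa}$ carrying $\mu^{*m}$-mass $\geq\rho^{O(\kappa)}$. The non-concentration of $\sigma^{(m)}$ (\Cref{posdim-sigma}(ii)) then produces \emph{two} elements $g_1,g_2$ in that box with $|\ttb_{g_1}-\ttb_{g_2}|\geq\rho^{O(\kappa)}$. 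Since both satisfy $g_i^{-1}y\in B_{\rho^{1/2}}z$, a direct computation shows that the nontrivial element $u(\ttb_{g_2}-\ttb_{g_1})a(\ttr_{g_1}\ttr_{g_2}^{-1})$, at distance in $[\rho^{O(\kappa)},\rho^{C\kappa}]$ from $\Id$, nearly fixes a point at bounded distance from $z$; hence $\inj(z)\ll\rho^{C\kappa/2}$. Thus every such $z$ lies deep in the cusp, and \Cref{effective-recurrence} at time $n-m$ yields the contradiction once $n-m\ggg|\log\inj(x)|$. In short: instead of bounding the number of sheets, the paper shows that the existence of \emph{two} well-separated sheets already forces $z$ into the cusp --- this is the idea your outline is missing.
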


\begin{proof}
Let $\kappa>0$ be a parameter to specify below. Let   $\rho\in (0, 1/10)$, $n\geq |\log \rho|$,  $x,y\in X$,  and assume 
\begin{equation}\label{concent}
\mu^{*n}*\delta_{x}(B_{\rho}y) \geq \rho^\kappa.
\end{equation}

 Let $\alpha = \frac{1}{10(\Lyap+1)} > 0$ and then $m = \lfloor \alpha \abs{\log \rho} \rfloor$.
Writing {$\mu^{*n}*\delta_{x}=\mu^{*m}*\mu^{*(n-m)}*\delta_{x}$}, Equation \eqref{concent} implies that 
\[\mu^{*(n-m)}*\delta_{x}(Z)\geq \rho^{2\kappa}\,\, \text{ where } \,\,Z:=\{z: \mu^{*m}*\delta_z(B_{\rho}y)\geq \rho^{2\kappa}\},\]
 up to assuming $\rho$ small enough in terms of $\kappa$. 
Indeed,
\begin{align*}
    \rho^{\kappa}\leq \mu^{*m}*\mu^{*(n-m)}*\delta_x(B_{\rho} y)&=\int_{Z\cup (X\smallsetminus Z)} \mu^{*m}*\delta_z(B_{\rho}y) \dd\mu^{*(n-m)}*\delta_x(z)\\
    &\leq \rho^{2\kappa}+\mu^{*(n-m)}*\delta_x(Z),
\end{align*}
so we obtain  $\mu^{*(n-m)}*\delta_x(Z)\geq \rho^{\kappa}-\rho^{2\kappa}\geq \rho^{2\kappa}$, provided $\rho\leq 2^{-1/\kappa}$.

We now show that $Z$ must be included in a small neighborhood of the cusp.
Fix $z\in Z$. By definition, 
\begin{equation} \label{posd-eq2}
{\mu^{*m}} \set{g \,:\, g z \in B_{\rho}y } \geq \rho^{2\kappa}.
\end{equation}
On the other hand, fixing $\gamma=\gamma(\mu) \in (0,1)$ as  in \Cref{posdim-sigma}, we have by \Cref{posdim-sigma}(i) that for   $\rho\lll_{\kappa} 1$,
\begin{equation} \label{eq-bg-bound}
\mu^{*m} \set{g \,:\, |\ttb_{g}|\leq \rho^{-4\gamma^{-1}\kappa} } \geq 1-\rho^{3\kappa }.
\end{equation}
By  the large deviation principle of i.i.d. random variables $(\ttr_g)_{g\sim \mu}$, there exists also $\eps>0$ depending only on $\mu$ such that
 \begin{equation}\label{eq-rg-bound}
\mu^{*m}\set{g\,:\,\log\ttr_{g} \in [-(\Lyap+1)m, -(\Lyap-1)m]}\geq 1-\rho^{\alpha\eps}.
\end{equation}

Let $C>1$ be a parameter to be specified below depending on $\mu$ only. Cutting the intervals $[-\rho^{-4\gamma^{-1}\kappa}, \rho^{-4\gamma^{-1}\kappa}]$ and $[-(\Lyap+1)m, -(\Lyap-1)m]$ into  subintervals of length  $\rho^{C\kappa}$, then using the pigeonhole principle, we deduce from \eqref{posd-eq2} \eqref{eq-bg-bound}, \eqref{eq-rg-bound} that there exists $(b_0,r_0)\in \R^2$ with  $\abs{b_{0}}\leq \rho^{-4\gamma^{-1}\kappa}$ and   $r_{0} \in[e^{-(\Lyap+1)m}, e^{-(\Lyap-1)m}]$ such that the set
\[E := \setbig{g \,:\, g z \in B_{\rho}y  \text{ and } \abs{\ttb_{g}-b_{0}}\leq \rho^{C\kappa} \text{ and } \abs{1-\ttr_{g}r^{-1}_{0}}\leq \rho^{C\kappa}}\]
has $\mu^{*m}$-measure
\begin{equation}\label{posd-eq5}
\mu^{*m}(E) \geq \frac{\rho^{2\kappa}-\rho^{3\kappa}-\rho^{\alpha\eps}}{\lceil2\rho^{-4\gamma^{-1}\kappa}\rho^{-C\kappa}\rceil\lceil 2 m\rho^{-C\kappa}\rceil} \geq \rho^{4C\kappa}
\end{equation}
where the last lower bound assumes $C\geq 4\gamma^{-1}$, $3\kappa\leq \alpha\eps$, and $\rho\lll_{\kappa}1$.

Consider $g_{1}, g_{2}\in E$.
By the bounds on $b_0$ and $r_0$ together with the choice of $m$, we have
$\norm{\Ad(g_1^{-1})} \leq \rho^{-1/2}$ provided $\kappa \lll_{C}1$.
Using $\dist(g_{1} z, g_{2} z) \ll \rho$, we deduce
\begin{equation}\label{eq:distzhz}
\dist(z, g_1^{-1}g_2 z) \ll \norm{\Ad(g_1^{-1})} \rho \ll \rho^{1/2}.
\end{equation}

We now aim to choose such $g_1$ and $g_2$ so that their mutual distance is much greater than $\rho^{1/2}$, but still dominated by a large power of $\rho^\kappa$.
Recalling that $g_i = a(\ttr_{g_{i}}^{-1})u(\ttb_{g_{i}})$ for each $i = 1,2$, we rewrite $g_{1}^{-1}g_{2}$ as
\begin{equation*}
g_{1}^{-1}g_{2}=u(-\ttb_{g_{2}}) h u(\ttb_{g_2}) \,\,\,\text{ where } \,\,\,h := u( \ttb_{g_2} - \ttb_{g_1} ) a(\ttr_{g_{1}}\ttr^{-1}_{g_{2}}).
\end{equation*}
The combination of  \eqref{posd-eq5} and the non-concentration estimate from \Cref{posdim-sigma}(ii) allows us to choose the elements $g_{1}, g_{2}\in E$ such that 
\begin{equation*}\label{posd-eq7}
\abs{\ttb_{g_2}-\ttb_{g_1}} \geq \rho^{\gamma^{-1}5C\kappa}
\end{equation*}
provided  $\kappa\lll_{C}1$ and $\rho\lll_{\kappa}1$  (in particular justifying $\rho^{\gamma^{-1}5C\kappa}> e^{-m}$ as required by \Cref{posdim-sigma}(ii)).
Observing that 
\begin{equation*}
\dist (h, \Id)
\simeq \abs{\ttb_{g_2}-\ttb_{g_{1}}|+|1-\ttr_{g_{1}}\ttr^{-1}_{g_{2}}}
\in [\rho^{\gamma^{-1}5C\kappa}, 4\rho^{C \kappa}]
\end{equation*}
and recalling $\abs{\ttb_{g_{2}}} \leq \rho^{-4\gamma^{-1} \kappa}$, 
we deduce
\begin{equation}\label{posd-eq6}
\rho^{1/4} \ll \rho^{\gamma^{-1}(5C + 8)\kappa} \ll \dist(g_1^{-1}g_2, \Id) \ll \rho^{(C - 8\gamma^{-1})\kappa}
\end{equation}
provided $\kappa\lll_{C} 1$. This is the desired separation for $g_{1},g_{2}$.

Assume $C >16 \gamma^{-1}$.
From \eqref{eq:distzhz} and \eqref{posd-eq6}, we deduce that $\inj(z) \ll \rho^{C \kappa/2} + \rho^{1/2}$.
When $\kappa \lll_C 1$ and $\rho \lll_\kappa 1$, this gives
\[
\inj(z) \leq \rho^{C \kappa/4}.
\]

In conclusion, we have shown that for $C\ggg1$, for $\kappa\lll_{C} 1$, $\rho\lll_{\kappa}1$, and $n \geq m =\lfloor \alpha|\log \rho| \rfloor$, we have
\[({\mu^{*(n-m)}}*\delta_{x} ) \{ \inj  \leq \rho^{C\kappa/4}\} \geq \rho^{2\kappa} \]
By the effective recurrence statement from \Cref{effective-recurrence}, this is absurd if $n-m\ggg {|\log \inj(x)|}$.
This concludes the proof of the proposition.
\end{proof}

\section{Dimensional bootstrap} \label{Sec-bootstrap}

In this section, we explain how the positive dimension estimate for $\mu^{*n}*\delta_{x}$ established in the previous section can be {upgraded} to a high-dimension estimate, up to applying more convolutions by $\mu$ and throwing away some small part of the measure. 
The notion of \emph{robust measures}  from \cite{Shmerkin} is well adapted to our purpose.
\begin{definition}[Robustness] \label{robust-measure}
Let $\alpha >0$, $I\subseteq (0, 1]$, $\tau\in \R^+$. 
A Borel measure $\nu$ on $X$ is \emph{$(\alpha, \cB_{I}, \tau)$-robust} if $\nu$ can be decomposed as the sum of two Borel measures $\nu=\nu'+\nu''$ such that $\nu''(X)\leq \tau$, and $\nu'$ satisfies 
\begin{equation} \label{eq:injrad}
 \nu'\{\inj < \sup I \}=0,
 \end{equation}
as well as for all $ \rho\in I$, $y\in X $,
\begin{equation}
\label{eq:rob1}
\nu'(B_ {\rho}y)\leq \rho^{3\alpha}.
\end{equation}
If $I$ is  a singleton $I = \{\rho\}$, we simply write that $\nu$ is $(\alpha, \cB_\rho, \tau)$-robust.
\end{definition}

Condition \eqref{eq:rob1} means that $\nu'$ has normalized dimension at least $\alpha$ with respect to balls of radius $\rho$. Note that in this definition, $\nu$ may not be a probability measure, this flexibility will be convenient for us.

The goal of the section is to establish the following high dimension estimate.

 \begin{proposition}[High dimension] \label{high-dim}
Let $\kappa \in (0, 1/10)$. For  $\eta, \rho \lll_{\kappa}1$ and  
 for all $n \ggg_{\kappa} |\log \rho| + |\log \inj(x)|$,  the measure {$\mu^{*n}*\delta_{x}$} is $(1-\kappa, \cB_ {\rho}, \rho^\eta)$-robust.
\end{proposition}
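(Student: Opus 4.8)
The strategy is to iterate \Cref{positive-dimension} via a multislicing/self-convolution scheme, following the Shmerkin robustness formalism as in \cite{BH24}. We already have from \Cref{positive-dimension} that, after $n \gg_\kappa |\log\rho| + |\log\inj(x)|$ steps, the measure $\mu^{*n}*\delta_x$ charges every $\rho$-ball by at most $\rho^{\kappa_0}$ for some fixed initial exponent $\kappa_0 = \kappa_0(\mu) > 0$; combined with the effective recurrence \Cref{effective-recurrence}, after discarding a part of mass $\le \rho^\eta$ we may assume the remaining measure is supported where $\inj \ge \rho$, giving $(\kappa_0/3, B_\rho, \rho^\eta)$-robustness as a base case. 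The goal is then a bootstrap: show that $(\alpha, B_\rho, \tau)$-robustness of $\mu^{*n}*\delta_x$ implies $(\alpha', B_\rho, \tau')$-robustness of $\mu^{*(n+O_\kappa(|\log\rho|))}*\delta_x$ with $\alpha'$ strictly larger than $\alpha$ (by a definite amount depending only on $\alpha$ and $\mu$, as long as $\alpha < 1-\kappa$), and $\tau'$ not too much bigger than $\tau$. Iterating a bounded (in terms of $\kappa$) number of times pushes the dimension past $1-\kappa$.

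The inductive step is where the real work lies. Write $\mu^{*(n+m)}*\delta_x = \mu^{*m}*(\mu^{*n}*\delta_x)$ with $m \asymp_\kappa |\log\rho|$. One decomposes the displacement $g\mapsto g\cdot$ into its translation part $u(\ttb_g)$ and its diagonal part $a(\ttr_g)^{-1}$; the diagonal part, by the positivity of the Lyapunov exponent \eqref{def-Lyap} and large deviations, contracts/expands at controlled rate $e^{-\ell m}$, while the distribution of $\ttb_g$ under $\mu^{*m}$ is the self-similar-type measure $\sigma^{(m)}$, which by \Cref{posdim-sigma} has uniform positive dimension $\gamma$ above scale $e^{-m}$. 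So the convolution $\mu^{*m}*\nu$ looks locally like: take $\nu$ (which has dimension $\ge\alpha$), apply a diagonal map, and convolve in the unstable/horocycle direction with $\sigma^{(m)}$ (dimension $\ge\gamma$). The mechanism producing a dimension increment is a discretized sum-set / projection estimate à la Bourgain–Hochman: convolving a measure of dimension $\alpha < 1$ in one direction with a measure of positive dimension, under a genuinely non-concentrated / non-affine family of maps (guaranteed by $\supp\mu$ not being simultaneously diagonalisable, i.e. $\supp\lambda$ having no common fixed point), yields a measure whose dimension exceeds $\alpha$ by a fixed amount. Technically I would: (i) localise to $\rho$-balls and linearise, reducing to a statement about convolutions of discretised measures on $\R^3 \cong \kg$ exponential coordinates; (ii) invoke the multislicing theorem of \cite{BH24} to handle the interplay of the horocyclic, diagonal, and stable directions simultaneously (the measure spreads in the $u$-direction from $\sigma^{(m)}$, in the $a$-direction from variation of $\ttr_g$, and the non-commutativity $u a u^{-1} a^{-1}$ transfers spreading into the third direction); (iii) carefully track the exceptional set where non-concentration fails and absorb it into $\nu''$, so that $\tau' \le \tau + \rho^{\eta'}$.

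The main obstacle I expect is step (ii)–(iii): setting up the multislicing / discretized projection input in the correct geometric coordinates on $X$ and verifying its hypotheses uniformly — in particular the non-degeneracy condition (which is exactly where "$\supp\mu$ not simultaneously diagonalisable" enters, ruling out the measure staying trapped in a proper subgroup orbit) and propagating the error terms so that only an exponentially small (in $n$, hence $\rho^{\eta}$) fraction of mass is sacrificed at each of the $O_\kappa(1)$ iterations. One must also ensure the number of convolution steps $m$ needed per iteration stays $\asymp_\kappa |\log\rho|$ and the cusp-correction $|\log\inj(x)|$ does not accumulate — this is handled by re-invoking \Cref{effective-recurrence} to guarantee that after the first $O(|\log\inj(x)|)$ steps the walk is, with overwhelming probability, in a fixed compact part of $X$, so subsequent iterations see an $x$-independent geometry. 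Once $\alpha > 1-\kappa$ is reached, the proof of \Cref{high-dim} is complete; the remaining passage to full equidistribution (using the $L^2$-spectral gap) is deferred to \Cref{Sec-equidistribution}.
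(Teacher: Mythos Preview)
Your outline is essentially the paper's proof: base case from \Cref{positive-dimension} combined with \Cref{effective-recurrence}, then a bootstrap via the multislicing theorem of \cite{BH24} (packaged here as \Cref{dim-increment}), iterated $O_\kappa(1)$ times. The one structural point you elided is that the multislicing input is robustness on a \emph{range} of scales $\cB_{[\rho,\rho^\eps]}$ while the output is a gain only at the single coarser scale $\cB_{\rho^{1/2}}$; the paper therefore runs the increment at many scales in parallel and recombines the single-scale outputs back into a range via the elementary \Cref{Lemma: combinatorial robust 2} before the next iteration. Also, the paper works in the dual (slicing/projection) picture rather than your sum-set picture: it builds straightening charts $\varphi_{\theta_g}$ in which $g^{-1}B_\rho$ is a rectangle, and the non-concentration hypothesis of multislicing is the angular spread of $(D\varphi_{\theta_g})^{-1}V_k$, verified from the H\"older regularity of $\sigma^{(m)}$ (\Cref{lm:noncon}); your commutator heuristic is absorbed into that black box rather than argued directly.
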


 \subsection{Multislicing}
The proof of \Cref{high-dim} relies on a multislicing estimate established in \cite{BH24}. We recall the case of interest in our context. 

\bigskip
We consider $\Theta$  a measurable space. We let $(\varphi_{\theta})_{\theta\in \Theta}$ denote a measurable family of $C^2$-embeddings
$\varphi_{\theta} : B^{\R^3}_{1}\rightarrow \R^3$, and $(L_{\theta})_{\theta\in \Theta}$  a measurable family of constants $L_{\theta}\geq 1$ such that  each map $\varphi_{\theta}$ is $L_{\theta}$-bi-Lipschitz:
\begin{align*}
\forall x,y\in B^{\R^3}_1,\quad  \frac{1}{L_{\theta}}\|x-y\|\leq \|\varphi_\theta(x) - \varphi_{\theta}(y)\| \leq L_{\theta}\|x-y\|
\end{align*}
and has  second order derivatives bounded by $L_{\theta}$:
\begin{align*}
\forall x, h \in B^{\R^3}_1,\quad \norm{\varphi_\theta(x + h) - \varphi_\theta(x) - (D_x \varphi_\theta)(h)} \leq L_{\theta}\norm{h}^2.
\end{align*}

Given $\rho >0$, we denote by $\cD_{\rho}$  (resp. $\cR_\rho$)  the collection of subsets of $\R^3$ that are translates of the $\rho$-cube $[0,\rho]^3$ (resp. the rectangle $R_{\rho}:= [0,1]e_{1}+[0,\rho^{1/2}]e_{2}+[0,\rho]e_{3}$).

We will also need to measure the angle between subspaces in $\R^3$.
For each $k = 1,2,3$, endow $\wedge^k \R^3$ with the unique Euclidean structure with respect to which the standard basis is orthonormal.
Given subspaces $V, W\subseteq \R^3$, we set
\[\dang(V,W)=\| v \wedge w\|\]
where $v,w$ are  unit vectors in $\wedge^{*}\R^3$ spanning respectively the lines $\wedge^{\dim V}V$, $\wedge^{\dim W}W$.

\bigskip

The multislicing estimate presented in \Cref{multislicing} below is a special case of \cite[Corollary 2.2]{BH24}.
It takes as input  a Borel measure $\nu$ on the unit ball $B^{\R^3}_{1}$ that has normalized dimension at least $\alpha$ with respect to balls of radius above $\rho$.  The output is a dimensional gain when the balls are replaced by    (non-linear) rectangles of the form $(\varphi^{-1}_{\theta}(x+R_{\rho}))_{x\in \R^3}$ provided $\theta$ is chosen {almost} typically via a probability measure for which  $\varphi_{\theta}$ satisfies suitable bounds on the derivatives as well as non-concentration estimates. The proof relies on Shmerkin's nonlinear version~\cite{Shmerkin} of Bourgain's discretized projection theorem~\cite{Bourgain2010}, local conditioning arguments, and a submodular inequality for covering numbers.

\begin{thm}[Multislicing \cite{BH24}] \label{multislicing}
Given $\kappa \in (0, 1/2)$, there exist $\eps=\eps(\kappa)>0$ and $\rho_{0}=\rho_{0}(\kappa)>0$ such that the following holds for all $\rho\in (0, \rho_{0}]$.

{Let} $\nu$ be a Borel  measure on $B^{\R^3}_{1}$ satisfying:  $\exists \alpha\in  (\kappa, 1-\kappa)$, $\forall r \in [\rho, \rho^\eps]$, 
\[ \sup_{Q\in \cD_{r}} \nu(Q)\leq  r^{3\alpha}.\]

Let $\Xi$ be a probability measure on $\Theta$ satisfying: 
\begin{itemize}
\item[(i)] \[\Xi \set{\theta\in\Theta : L_{\theta}\leq \rho^{-\eps}} =1.\]

\item[(ii)] $\forall k\in \{1,2\}$, $\forall x\in B^{\R^3}_{1}$, $\forall r \in [\rho, \rho^\eps]$,  $\forall W \in \Gr(\R^3, 3 - k)$,
\[
\Xi \set{\theta\in\Theta : \dang((D_x \varphi_{\theta})^{-1}V_k, W)\leq r }\leq r^\kappa,
\]
where $V_k = \Span_{\R}(e_1,\dotsc,e_k)$.
\end{itemize}

Then there exists $\cF\subseteq \Theta$ such that $\Xi(\cF)\geq 1- \rho^\eps$ and for every $\theta \in \cF$, there exists $A_{\theta}\subseteq B^{\R^3}_{1}$ with $\nu(A_{\theta})\geq 1-\rho^\eps$ and satisfying 
\[
\sup_{Q\in \cR_{\rho}}\nu_{|A_{\theta}}(\varphi_{\theta}^{-1}Q) \leq \rho^{\frac{3}{2}\alpha+\eps}.
\]
\end{thm}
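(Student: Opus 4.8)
The plan is to deduce the statement from the general multislicing theorem \cite[Corollary 2.2]{BH24} by matching parameters, and then recall the shape of the proof of that corollary. The reduction is bookkeeping: take the ambient dimension to be $3$, the flag $V_1\subset V_2\subset\R^3$ with $V_k=\Span_\R(e_1,\dots,e_k)$, the two thin directions of $R_\rho$ — scale $\rho^{1/2}$ along $e_2$ and scale $\rho$ along $e_3$ — as the contraction ratios attached to the successive quotients $V_2/V_1$ and $\R^3/V_2$, the Frostman exponent $3\alpha$ of $\nu$ on cubes as the input dimension, and hypotheses (i)--(ii) as, respectively, the bi-Lipschitz control and the transversality of the pulled-back flag $(D_x\varphi_\theta)^{-1}V_1\subset(D_x\varphi_\theta)^{-1}V_2$ demanded there; the target exponent $\tfrac32\alpha=(\tfrac12+1)\alpha$ is the value predicted by the self-similar heuristic, which \cite[Corollary 2.2]{BH24} improves to $\tfrac32\alpha+\eps$. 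With these identifications the conclusion is exactly the specialized statement, after shrinking $\eps$ and $\rho_0$.

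For the substance I would organize the argument in \cite{BH24} as follows. \emph{Linearization:} cover $B^{\R^3}_1$ by $O(\rho^{-1})$ cubes of side $\simeq\rho^{1/2}$; on each such cube the $C^2$-bound replaces $\varphi_\theta$ by its affine part at the centre up to an additive error $\leq L_\theta\rho\leq\rho^{1-\eps}$, and after thickening $R_\rho$ accordingly one reduces to linear maps $T_\theta$ with $\|T_\theta^{\pm1}\|\leq\rho^{-\eps}$ whose flags $T_\theta^{-1}V_k$ inherit the non-concentration (ii), at the cost of a harmless factor $\rho^{-O(\eps)}$ from summing over cubes. \emph{Reduction to a Euclidean multislicing inequality:} for linear $T$, the set $T^{-1}(x+R_\rho)$ is a parallelepiped that is a unit segment along the line $W_1:=T^{-1}V_1$, of width $\rho^{1/2}$ transverse to $W_1$ inside the plane $W_2:=T^{-1}V_2$, and of width $\rho$ transverse to $W_2$. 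Writing $q_2\colon\R^3\to\R^3/W_2\cong\R$ and disintegrating $\nu=\int\nu_t\,d(q_{2\star}\nu)(t)$ over the $2$-plane slices $q_2^{-1}(t)$, one gets $\nu(T^{-1}(x+R_\rho))\leq q_{2\star}\nu(I)\cdot\sup_t\nu_t(\text{tube}\cap q_2^{-1}(t))$ for a $\rho$-interval $I$, with each $\text{tube}\cap q_2^{-1}(t)$ a $1\times\rho^{1/2}$ rectangle in its plane; so, passing to dyadic scales and covering numbers, it suffices to show that projecting $\nu$ modulo $W_2$, and then inside the generic slice modulo $W_1$, raises the normalized dimension to $\geq\alpha+\eps$ at the relevant scales, for all but a $\rho^\eps$-fraction of flags and after discarding a $\nu$-piece of mass $\leq\rho^\eps$.

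\emph{The core increment.} Each of the two projections is handled by Shmerkin's nonlinear version \cite{Shmerkin} of Bourgain's discretized projection theorem \cite{Bourgain2010}: since $\alpha$ is pinned in $(\kappa,1-\kappa)$, a measure of normalized dimension $\alpha$ in $\R^2$ has, for all directions outside a set that is $\rho^\eps$-small for the $\kappa$-non-concentrated law $\Xi$ of (ii), a one-dimensional projection of normalized dimension $\geq\alpha+\eps$ down to scale $\rho$ (and likewise down to $\rho^{1/2}$ for the inner projection). The genuinely delicate point, and the one requiring local conditioning and the submodularity inequality for covering numbers, is to legitimize the \emph{second} step: one must know that outside a set of $t$ of $\nu$-mass $\leq\rho^\eps$ the conditional measures $\nu_t$ still carry normalized dimension $\geq\alpha$, which holds only on average for a mere Frostman $\nu$ and is extracted by a multiscale pigeonholing built on the submodularity of $r\mapsto\log N(E,r)$, the $r$-covering number of $E$. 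Granting this, the two increments combine to $\nu_{|A_\theta}(\varphi_\theta^{-1}Q)\leq\rho^{\frac32\alpha+\eps}$ off the collected exceptional sets — one in $\theta$ of $\Xi$-measure $\leq\rho^\eps$, one in the disintegration base of $\nu$-mass $\leq\rho^\eps$ — which is the claim. I expect the propagation of the Frostman exponent to the slice measures to be the main obstacle; the linearization and the scale bookkeeping are routine once that is in hand.
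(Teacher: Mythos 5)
Your proposal takes the same route as the paper: \Cref{multislicing} is not proved here but is stated as a direct specialization of \cite[Corollary 2.2]{BH24}, with exactly the parameter matching you describe (ambient dimension $3$, the flag $V_1\subset V_2$, contraction ratios $\rho^{1/2}$ and $\rho$ for the successive quotients, Frostman exponent $3\alpha$, and target exponent $\tfrac32\alpha+\eps$). Your sketch of the internal argument of \cite{BH24} also matches the paper's own description of that proof (Shmerkin's nonlinear version of Bourgain's discretized projection theorem, local conditioning, and submodularity of covering numbers), so nothing further is needed.
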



\subsection{Straightening charts}

In order to apply the multislicing estimates from  \Cref{multislicing}, we need special macroscopic charts in which the preimage by {$g\in \supp \mu^{*n}$} of a  ball looks like a rectangle. The goal of the present subsection is to define those charts.

\bigskip
Recall that $\kg$ admits the rootspace decomposition 
\[
\kg= \kg_-\oplus \kg_{0}\oplus \kg_+,
\]
where
\[
\kg_- 
=\R e_{-},\quad  \kg_{0} 
=\R e_{0}\quad \text{and} \quad \kg_{+}=\R e_{+}.
\]
We then define $\Psi : \kg\rightarrow  G$ by the formula: $\forall (v_-, v_0, v_+)  \in\kg_- \times {\kg_0} \times  \kg_+$, 
\[
\Psi(v_- + v_0 + v_+) = \exp(v_-)\exp(v_0)\exp(v_+).
\]

Recall also the notation 
\[
a(t)= \begin{pmatrix} t^{1/2} &0 \\  0& t^{-1/2}\end{pmatrix}.
\]

The next lemma tells us that, in the chart $\Psi$,  the image of a  ball $B_{\rho}$ in $ G$ by some diagonal element $a(t)$ with small $t>0$ is included in a  rectangle whose volume is comparable.

\begin{lemma} \label{straightening}
\label{lm:param-loc} 
There is an absolute constant $r_0> 0$ such that 
for any $t, \rho\in (0, 1)$  with $|t^{-1}\rho|\leq r_0$ and any $h \in G$, there is $w \in \kg$ such that
\begin{equation}\label{eq:param-loc}
\set{v \in B^\kg_{r_0} : \Psi(v) \in a(t) B_{\rho} h} \subset \Ad(a(t)) B^{\kg}_{10\rho} + w.
\end{equation}
\end{lemma}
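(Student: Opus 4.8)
The statement is a local coordinate computation, so the plan is to work entirely on the Lie algebra side via the chart $\Psi$ and the bi-Lipschitz nature of $\exp$ near $0$. First I would fix notation: since $\Psi$ is a diffeomorphism from a neighborhood of $0$ in $\kg$ onto a neighborhood of $\Id$ in $G$, and both the exponential map and its inverse are bi-Lipschitz on small balls (with respect to the chosen right-invariant metric), there is an absolute constant $C_0\geq 1$ such that $B^{\kg}_{\rho/C_0}\subseteq \Psi^{-1}(B_\rho)\subseteq B^{\kg}_{C_0\rho}$ for $\rho$ small. The point of the claim is that conjugation by $a(t)$ turns the (roughly round) set $\Psi^{-1}(B_\rho)$ into something trapped inside an ellipsoid $\Ad(a(t))B^{\kg}_{10\rho}$, which is elongated in the $e_+$-direction (eigenvalue $t$), round in the $e_0$-direction, and contracted in the $e_-$-direction (eigenvalue $t^{-1}$); the condition $|t^{-1}\rho|\leq r_0$ guarantees this ellipsoid is still small enough to stay in the domain where everything is controlled.

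The key steps, in order, are as follows. Let $v\in B^{\kg}_{r_0}$ with $\Psi(v)\in a(t)B_\rho h$. Write $\Psi(v)=a(t)\,g\,h$ for some $g\in B_\rho$. Multiplying on the right by $h^{-1}$ and on the left by $a(t)^{-1}$, I get $a(t)^{-1}\Psi(v)h^{-1}=g\in B_\rho$. Now I want to compare $\Psi(v)$ with $a(t)\Psi(\Ad(a(t))^{-1}v')a(t)^{-1}$-type expressions. The cleanest route: observe $a(t)\exp(X)a(t)^{-1}=\exp(\Ad(a(t))X)$, and since the root space decomposition is preserved by $\Ad(a(t))$ (each $\kg_\bullet$ is an eigenspace), conjugating the ordered product $\Psi(v)=\exp(v_-)\exp(v_0)\exp(v_+)$ by $a(t)$ gives exactly $\Psi(\Ad(a(t))v)$. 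Hence
\[
a(t)^{-1}\Psi(v)\,a(t)=\Psi\bigl(\Ad(a(t))^{-1}v\bigr).
\]
This is the structural miracle that makes the statement clean. So from $\Psi(v)\in a(t)B_\rho h$ I would like to deduce a membership statement about $\Ad(a(t))^{-1}v$; but the $h$ on the right side and the asymmetry (we conjugate, not just left-translate) require care. The honest move is to not insist on an exact identity but to argue with the bi-Lipschitz bounds: set $w$ to be (the $\Psi$-preimage description of) a fixed reference point, e.g.\ corresponding to one particular choice of $g_0\in B_\rho$ and the given $h$, and show every other admissible $v$ differs from $w$ by an element of $\Ad(a(t))B^{\kg}_{10\rho}$. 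Concretely, if $\Psi(v_1),\Psi(v_2)\in a(t)B_\rho h$ then $\Psi(v_1)\Psi(v_2)^{-1}\in a(t)B_{2\rho}a(t)^{-1}$ (using right-invariance of the metric to absorb $h$ and $B_\rho B_\rho^{-1}\subseteq B_{2\rho}$), i.e.\ $\Psi(v_1)\Psi(v_2)^{-1}=a(t)g'a(t)^{-1}$ with $g'\in B_{2\rho}$; then $\Psi(v_1)\Psi(v_2)^{-1}=\Psi(\Ad(a(t))\xi)$ for some $\xi\in B^{\kg}_{2C_0\rho}$; finally, comparing $v_1-v_2$ with $\log(\Psi(v_1)\Psi(v_2)^{-1})$ up to second-order (Baker--Campbell--Hausdorff / the fact that $\Psi$ agrees with $\exp$ to first order, with error quadratic in the radius $r_0$) yields $\|v_1-v_2-\Ad(a(t))\xi\|\lesssim r_0\cdot(\text{something})$, and absorbing this error into the constant $10$ (that is why $10\rho$ and not $2\rho$, and why $r_0$ must be chosen small) gives $v_1-v_2\in\Ad(a(t))B^{\kg}_{10\rho}$. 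Taking $w=v_2$ for any fixed admissible $v_2$ completes \eqref{eq:param-loc}.

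\textbf{Main obstacle.} The routine part is the bi-Lipschitz comparison of $\Psi$ with $\exp$ and the BCH error estimates; these only cost absolute constants. The genuine subtlety is controlling the $\Ad(a(t))$-conjugated balls uniformly in $t\in(0,1)$: the map $\xi\mapsto\Ad(a(t))\xi$ blows up the $\kg_-$-component by $t^{-1}$, so an a priori harmless second-order error of size $O(\rho^2)$ in the "wrong" direction could be magnified to $O(t^{-1}\rho^2)$, which is \emph{not} negligible compared to the width $\rho$ of the target in that direction unless one is careful. This is precisely what the hypothesis $|t^{-1}\rho|\leq r_0$ is there to handle: it ensures $t^{-1}\rho^2\leq r_0\rho$, so the magnified error is still a small multiple of $\rho$. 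I would therefore be careful to do all the BCH bookkeeping \emph{before} applying $\Ad(a(t))$, track in which root space each error term lives, and only then push forward by $\Ad(a(t))$, invoking $|t^{-1}\rho|\leq r_0$ at the single place where the $\kg_-$-direction error gets amplified. Choosing $r_0$ small enough at the end (depending only on the absolute BCH constants and $C_0$) closes the argument.
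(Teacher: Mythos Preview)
Your high-level plan coincides with the paper's: fix one admissible $w=v_2$, note that any other admissible $v_1$ satisfies $\Psi(v_1)\Psi(v_2)^{-1}\in a(t)B_{2\rho}a(t)^{-1}$, write this as $\Psi(u)$ with $u\in\Ad(a(t))B^{\kg}_{C\rho}$, and then extract $v_1-v_2$. The difference is only in how the last extraction is carried out. The paper does it by two explicit $2\times2$ identities (rewriting $\exp(u_+)\exp((v_2)_-)\exp((v_2)_0)$ and $\exp(u_0)\exp((v_2)_-)$ in $N^-AN^+$ form), which immediately yield root-space-wise bounds $(v_1-v_2)_\pm=O(t^{\pm1}\rho)$, $(v_1-v_2)_0=O(\rho)$.

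Your BCH route can also be made to work, but the sketch as written has a gap at exactly the point you flag. The stated bound ``$\|v_1-v_2-\Ad(a(t))\xi\|\lesssim r_0\cdot(\text{something})$'' is a \emph{norm} bound, and no norm bound of this shape suffices: the target $\Ad(a(t))B^{\kg}_{10\rho}$ has width only $10t\rho$ in the $e_+$-direction (note: for $t<1$ the ellipsoid is \emph{contracted} in $e_+$ and \emph{elongated} in $e_-$, the opposite of what you wrote), while the error $v_1-v_2-u$ coming from the interaction with $v_2\in B^{\kg}_{r_0}$ has $e_-$-component of size $\asymp r_0\rho$ and $e_+$-component of size $\asymp r_0 t\rho$; these cannot be simultaneously dominated by a single norm scale inside $\Ad(a(t))B^{\kg}_{10\rho}$. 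Likewise, ``do the BCH before applying $\Ad(a(t))$'' does not help directly, since $v_2$ is of size $r_0$ and is not the $\Ad(a(t))$-image of anything small.

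What does work is precisely your throwaway phrase ``track in which root space each error term lives'': one must bound each component $(v_1-v_2-u)_-,(v_1-v_2-u)_0,(v_1-v_2-u)_+$ separately, exploiting that in the ordered product $\Psi(u)\Psi(v_2)$ the only interaction producing new $e_+$-content is $\exp(u_+)$ past $\exp((v_2)_-)$, which keeps the $e_+$-part of size $O(t\rho)$. Doing this carefully in $\SL_2(\R)$ is exactly the paper's pair of matrix identities; so your approach, once made precise, converges to the paper's computation.
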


This result is a particular case of \cite[Lemma 4.10]{BH24}. We give a shorter proof in our context for completeness.

\begin{proof} 
Fix a vector $w$ in the left hand side of \eqref{eq:param-loc}. If $v$ belongs to the left hand side of \eqref{eq:param-loc} as well, then by the triangle inequality, we have
    \[\Psi(v)\in a(t)B_{2\rho} a(t^{-1})\Psi(w).\]
We can choose $r_0>0$ small so that the image of $\Psi$ contains $B_{2r_{0}}$. In particular, using that conjugation commutes with the exponential map, there is $u=u_-+u_0+u_+\in \mathfrak{g}$ such that 
\[u_-\in B^{\mathfrak{g}_-}_{t^{-1}\rho}, \quad u_0\in B^{\kg_{0}}_{\rho},\quad u_+\in B^{\mathfrak{g}_+}_{t\rho}\]
and 
\[\Psi(v)=\Psi(u)\Psi(w).\]
 Consider   $x,y,s\in \R$ with $s\neq 0$ and $1+xy\neq 0$.  Note that we have in $G$ the following equality
\begin{align} \label{eqmatrix-1}
 \begin{pmatrix}
        1 & x\\
        0 & 1
    \end{pmatrix}
    \begin{pmatrix}
        1 & 0\\
        y & 1
    \end{pmatrix}
    \begin{pmatrix}
        s & 0\\
        0 & s^{-1}
    \end{pmatrix}
=
    \begin{pmatrix}
        1 & 0\\
        y' & 1
    \end{pmatrix}
    \begin{pmatrix}
        s' & 0\\
        0 & s'^{-1}
    \end{pmatrix}
    \begin{pmatrix}
        1 & x'\\
        0 & 1
    \end{pmatrix}
 \end{align}
where
\[x'= \frac{x}{(1+xy)s^2}, \quad s'= (1+xy)s,\quad y'= \frac{y}{1+xy}.\]
Similarly, 
\begin{align} \label{eqmatrix-2}
    \begin{pmatrix}
        s & 0\\
        0 & s^{-1}
    \end{pmatrix}
    \begin{pmatrix}
        1 & 0\\
        y & 1
    \end{pmatrix}=
    \begin{pmatrix}
        1 & 0\\
        s^{-2} y & 1
    \end{pmatrix}
    \begin{pmatrix}
        s & 0\\
        0 & s^{-1}
    \end{pmatrix}.
\end{align}
Observe that 
\[\Psi(v)=\Psi(u)\Psi(w)=\exp(u_-)\exp(u_0)\exp(u_+)\exp(w_-)\exp(w_0)\exp(w_+).\]
Assuming $r_0\lll 1$,  applying \eqref{eqmatrix-1} to the factor $\exp(u_+)\exp(w_-)\exp(w_0)$
then \eqref{eqmatrix-2} to the factor $\exp(u_0)\exp(w'_-)$,  we obtain 
\[\Psi(v)\in \exp(B^{\mathfrak{g}_-}_{10t^{-1}\rho}+w_-)\exp(B^{\mathfrak{g}_0}_{10\rho}+w_0)\exp(B^{\mathfrak{g}_+}_{10t \rho}+w_+).\]
Noting that $\Psi$ is injective (by direct computation again), this finishes the proof. 
\end{proof}

In view of \Cref{straightening} and the formula
\[
g = a(\ttr^{-1}_{g})u(\ttb_{g}),
\]
we define a family of \emph{straightening charts} $(\varphi_{\theta})$ as follows. Let 
\[\Theta=u(\R).\]
Fix $r_{1}>0$ such that $\Psi$ is a smooth diffeomorphism between $B^{\kg}_{r_{1}}$ and a neighborhood  $\mathcal{O}$ of $\Id\in  G$.
Given $\theta\in \Theta$, define $\varphi_{\theta}: \mathcal{O} \rightarrow \kg$ by
\[\varphi_{\theta}:= \Ad(\theta^{-1})\circ (\Psi_{|B^{\kg}_{r_{1}}})^{-1}.\]
Using that $\Psi$ commutes with conjugation, we have the alternative formula $\varphi_{\theta}=(\Psi_{|\Ad(\theta^{-1})B^{\kg}_{r_{1}}})^{-1} \circ \sC_{\theta^{-1}}$ where $\sC_{\theta^{-1}} : h\mapsto \theta^{-1}h\theta$.
Note that $\varphi_{\theta}$ is $L_{\theta}$-{bi-Lipschitz} and satisfies  $\|\varphi_{\theta}\|_{C^2}\leq L_{\theta}$ for some quantity
\begin{equation}
\label{eq:Ltheta}
L_{\theta} := L \|\theta\|^{4}.
\end{equation}
where $L > 1$ is a constant depending only on $r_1$.

Given an element $g\in P$, write  
\[g^{-1}= \theta_{g} a(\ttr_g)\] 
with 
\begin{equation}
\label{eq:thetag}
\theta_{g} := u(-\ttb_g) \in \Theta.
\end{equation}
\Cref{straightening} tells us that for any $h\in G$, $\varphi_{\theta_{g}}(g^{-1} B_{\rho}h)$  is essentially an additive translate of the rectangle $\Ad(a(\ttr_{g}))B^\kg_{\rho}$, provided that $\ttr_{g}\in (0, 1)$ and that both $g^{-1} B_{\rho}h$ and $a(\ttr_{g}) B_{\rho}h\theta_{g}$ sit inside a prescribed (macroscopic) neighborhood of the identity.

\subsection{Control of the charts} \label{Sec-non-concentration}

We now check that the charts {($\varphi_{\theta}$)} from the previous section satisfy {distortion control} and non-concentration estimates. {These} will be required in order to apply  \Cref{multislicing} in the next section. The constants $r_{0}$ from \Cref{straightening} and $r_{1}>0$ in the definition of $\varphi_{\theta}$ are assumed fixed in a canonical way (so that dependence on them does not appear in subscript of asymptotic notations).

{Recall $L_\theta$ and $\theta_g$ are respectively defined in \eqref{eq:Ltheta}  and \eqref{eq:thetag}.}

\begin{lemma}[Distortion control]\label{lm:nondeg}
Given $\eps>0$, there exists $\gamma=\gamma(\mu, \eps)>0$ such that for $n\ggg_{\eps} 1$, we have
\[
\mu^{*n}\set{ g \,: \, L_{\theta_{g}}> e^{\eps n}} \leq e^{-\gamma n}.
\]
\end{lemma}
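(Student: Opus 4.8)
Recall that $L_{\theta_g} = L\,\|\theta_g\|^4$ with $\theta_g = u(-\ttb_g)$, so $\|\theta_g\| \simeq 1 + |\ttb_g|$ and hence $L_{\theta_g} \le e^{\eps n}$ is equivalent, up to constants absorbed by taking $n$ large, to a polynomial bound $|\ttb_g| \le e^{\eps n/5}$ (any fixed fraction of $\eps$ works). Thus the statement reduces to showing
\[
\mu^{*n}\set{ g : |\ttb_g| > e^{\eps n/5} } \le e^{-\gamma n}
\]
for some $\gamma = \gamma(\mu,\eps) > 0$ and all large $n$. In the notation of the paper, $\ttb_g$ is precisely the coordinate appearing in $\sigma^{(n)} = \lambda^{*n}*\delta_0$, so this is exactly a tail bound for $\sigma^{(n)}$ at the exponential scale $t = e^{\eps n/5}$.

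First I would invoke \Cref{posdim-sigma}(i): there is $\gamma_0 = \gamma_0(\mu) > 0$ with $\sup_{n\ge 1}\int_{\R}|s|^{\gamma_0}\dd\sigma^{(n)}(s) =: M < \infty$. By the Markov inequality, for every $n$,
\[
\sigma^{(n)}\set{ s : |s| > e^{\eps n/5} } \le M\, e^{-\gamma_0 \eps n/5}.
\]
Translating back through the identification $\ttb_g \leftrightarrow s$, this gives $\mu^{*n}\set{g : |\ttb_g| > e^{\eps n/5}} \le M e^{-\gamma_0\eps n/5}$. Then for $n$ large enough (depending on $\eps$ through the constant $L$ in \eqref{eq:Ltheta} and on $M$), the event $\{L_{\theta_g} > e^{\eps n}\}$ is contained in $\{|\ttb_g| > e^{\eps n/5}\}$, and we conclude with $\gamma := \gamma_0\eps/10$, say, absorbing the constant $M$ into the choice of threshold $n \ggg_\eps 1$.

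There is essentially no obstacle here: the lemma is a soft consequence of the uniform polynomial moment bound \Cref{posdim-sigma}(i), which in turn rests on the finite exponential moment hypothesis on $\lambda$. The only mild point of care is bookkeeping the relation between $\|\theta_g\|$, $L_{\theta_g}$ and $|\ttb_g|$ so that the exponent loss from $L_{\theta_g}$ to $|\ttb_g|$ to $|s|^{\gamma_0}$ is harmless — each step costs only a fixed multiplicative factor in the exponent, so any $\gamma$ a definite fraction of $\gamma_0\eps$ works, and the ``$n\ggg_\eps 1$'' in the statement gives exactly the room needed to swallow the additive constants (the constant $L$ and $\log M$). If one wanted a cleaner bound one could instead use the full exponential moment directly via the product estimate $\prod_k \max(1,\ttr_{\phi_k},|\ttb_{\phi_k}|)$ recorded in the proof of \Cref{posdim-sigma}(i), but routing through the uniform $\gamma_0$-moment is shortest.
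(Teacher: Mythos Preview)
Your proof is correct and follows exactly the paper's own approach: the paper's proof is a one-line reference to the bound $L_{\theta_g}\ll(1+|\ttb_g|)^4$ together with the uniform positive-order moment of $\ttb_g$ from \Cref{posdim-sigma}(i), and you have simply spelled out the Markov-inequality step that makes this explicit.
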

\begin{proof}
This is a direct consequence of $L_{\theta_{g}}\ll\|\theta_{g}\|^{4} \ll(1+|\ttb_{g}|)^{4}$ and \Cref{posdim-sigma} (i), stating that the variable $\ttb_{g}$, where $g\overset{law}{\sim} \mu^{*n}$, has a moment of positive order that is bounded independently of $n$. 
\end{proof}

Set $\kg_{-,0}=\kg_{-} \oplus \kg_{0}$. 
\begin{lemma}[Non-concentration]\label{lm:noncon}
There exists a constant $\kappa > 0$ such that for $n \ggg 1$,  $h \in B_{r_{0}}$ and $\rho \geq e^{-n}$, we have
\[
\forall W \in \Gr(T_h G, 2), \quad \mu^{*n}\set{ g \, :\, \dang( (D_h \varphi_{\theta_{g}})^{-1}\kg_{-} ,W) \leq \rho} \ll \rho^\kappa,
\]
and 
\[
\forall W \in \Gr(T_h G, 1),\quad \mu^{*n}\set{ g \, :\, \dang((D_h \varphi_{\theta_{g}})^{-1}\kg_{-,0},W) \leq \rho} \ll \rho^\kappa.
\]
\end{lemma}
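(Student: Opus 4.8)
The plan is to unwind the definition of $\varphi_{\theta_g}$ and reduce the two non-concentration statements to concentration estimates for the affine data $(\ttr_g,\ttb_g)$ of $g$, which are in turn controlled by Lemma~\ref{posdim-sigma}. Recall that $\varphi_{\theta_g} = \Ad(\theta_g^{-1})\circ (\Psi_{|B^\kg_{r_1}})^{-1}$ with $\theta_g = u(-\ttb_g)$, so $(D_h\varphi_{\theta_g})^{-1} = (D_{\cdot}\Psi)\circ \Ad(\theta_g)$ where the differential of $\Psi$ is taken at the appropriate point. Since $h$ ranges over the fixed compact ball $B_{r_0}$, the map $D_{\cdot}\Psi$ and its inverse have operator norm bounded by an absolute constant and are absolutely bi-Lipschitz in $h$; hence $(D_h\varphi_{\theta_g})^{-1}\kg_-$ and $\Ad(\theta_g)\kg_-$ differ by a uniformly bounded linear map, and similarly for $\kg_{-,0}$. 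Because the function $\dang(\cdot,W)$ on the (compact) Grassmannian is uniformly Lipschitz up to precomposition by a fixed bi-Lipschitz diffeomorphism, it suffices to prove the two estimates with $\Ad(\theta_g)\kg_-$ (resp. $\Ad(\theta_g)\kg_{-,0}$) in place of $(D_h\varphi_{\theta_g})^{-1}\kg_-$, and with the Euclidean angle on $\kg$ rather than on $T_hG$. This collapses the dependence on $h$ entirely.

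Next I would compute $\Ad(u(-\ttb_g))$ explicitly in the basis $(e_-,e_0,e_+)$. One has $\Ad(u(b))e_- = e_- - b e_0 - b^2 e_+$ (up to sign conventions), $\Ad(u(b))e_0 = e_0 + 2b e_+$ (again up to the normalisation built into the basis), and $\Ad(u(b))e_+ = e_+$; so $\Ad(u(-\ttb_g))\kg_-$ is the line $\R(e_- + \ttb_g e_0 - \ttb_g^2 e_+)$ (with whatever signs fall out), and $\Ad(u(-\ttb_g))\kg_{-,0} = \Span(e_- + \ttb_g e_0 - \ttb_g^2 e_+,\ e_0 - 2\ttb_g e_+)$, a plane. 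The key point is that both of these subspaces move non-degenerately as $\ttb_g$ varies: for the line, the unit direction traces a real-analytic curve in $S^2$ whose derivative in $\ttb_g$ never vanishes (the $e_0$-component already guarantees this near $\ttb_g = 0$, and the $e_+$-component takes over for large $\ttb_g$), and for the plane, passing to $\wedge^2\kg \cong \R^3$, the unit bivector is again a non-degenerate real-analytic curve in $\ttb_g$.

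Consequently, for a fixed $W$, the set of $\ttb_g$ for which $\dang(\Ad(\theta_g)\kg_-, W)\le \rho$ is contained in a union of boundedly many intervals each of length $\ll \rho^{c_0}$ for some absolute $c_0>0$ (coming from the order of tangency of the curve with the "bad" great circle determined by $W$; away from infinity $c_0$ can be taken to be $1$, and a compactification/change of variables handles the behaviour as $\ttb_g\to\infty$, where the relevant curve accumulates at the line $\R e_+$ and the tangency order is again bounded). The analogous statement holds for the plane case in $\wedge^2\kg$. Now I invoke the non-concentration estimate of Lemma~\ref{posdim-sigma}(ii): since $g\overset{\text{law}}{\sim}\mu^{*n}$ gives $\ttb_g\overset{\text{law}}{\sim}\sigma^{(n)}$, and since $\rho\ge e^{-n}$ (so $\rho^{c_0}\ge e^{-n}$ is a scale to which part (ii) applies, possibly after shrinking the exponent), we get $\sigma^{(n)}$ of such an interval $\ll (\rho^{c_0})^\gamma = \rho^{c_0\gamma}$, and summing over the boundedly many intervals yields the claim with $\kappa := c_0\gamma$. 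The main obstacle I anticipate is the bookkeeping of the $\ttb_g\to\infty$ regime: one must check that the non-degeneracy of the curve $\ttb_g\mapsto \Ad(u(-\ttb_g))\kg_-$ (and its $\wedge^2$ analogue) does not deteriorate there — concretely, that after rescaling by the dominant power of $\ttb_g$ and reparametrising by $1/\ttb_g$, the resulting curve is still a non-degenerate analytic arc — so that the interval-length bound $\ll\rho^{c_0}$ holds with a uniform $c_0$ over all of $\R$; everything else is a direct combination of the explicit $\Ad$ computation with Lemma~\ref{posdim-sigma}(ii).
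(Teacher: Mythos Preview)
Your overall approach matches the paper's: reduce from $(D_h\varphi_{\theta_g})^{-1}\kg_-$ to $\Ad(\theta_g)\kg_-$, observe that $\dang(\Ad(u(-s))\kg_-, W)$ is comparable to a quadratic polynomial in $s = \ttb_g$, and then appeal to the regularity of $\sigma^{(n)}$ from Lemma~\ref{posdim-sigma}. (The paper phrases the first reduction as an exact identity via right-invariance of the distribution rather than a Lipschitz comparison, but your version is valid since the linear map $D_{\Psi^{-1}(h)}\Psi$ is evaluated at a point of the fixed compact $B^{\kg}_{r_1}$, independent of $\theta_g$.)

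There is, however, a genuine gap in the final step. Your claim that $\{s \in \R : \dang(\Ad(u(-s))\kg_-, W) \leq \rho\}$ is always contained in boundedly many intervals of length $\ll \rho^{c_0}$ \emph{in the variable $s \in \R$} is false. The paper computes $\dang \simeq |as^2 - bs - c|$ with $\max(|a|,|b|,|c|) = 1$; when $\max(|a|,|b|) \leq \rho^{1/10}$ one has $|c| = 1$ and the sublevel set lies in $\{|s| \gtrsim \rho^{-1/20}\}$, which is unbounded. In your compactification picture this is the arc of the conic near $s = \infty$: short in $\mathbb{P}^1$, but its preimage in $\R$ is not. After reparametrising by $u = 1/s$ the arc is indeed short in $u$, but then you need H\"older regularity of the pushforward of $\sigma^{(n)}$ under $s \mapsto 1/s$ near $u = 0$, which is precisely the \emph{moment} estimate Lemma~\ref{posdim-sigma}(i), not the H\"older bound (ii) you invoke. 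The paper handles this by an explicit case split: if $\max(|a|,|b|) \leq \rho^{1/10}$, use (i) directly; otherwise restrict to $|s| \leq \rho^{-1/4}$ (the tail controlled by (i)), where an elementary sublevel-set lemma shows the bad set is contained in two intervals of length $\ll \rho^{3/20}$, and apply (ii). The same issue and fix apply to the second estimate for $\kg_{-,0}$.
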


\begin{proof}
Unwrapping definitions, we observe that the distribution of subspaces $h\mapsto (D_h \varphi_{\theta})^{-1}\kg_{-}$ is right-invariant and coincides with $\Ad(\theta)\kg_{-}$ at the identity. The same holds for $(D_h \varphi_{\theta})^{-1}\kg_{-,0}$. Recalling that  $\theta_{g}=u(-\ttb_{g})$ and $\sigma^{(n)}$ is the law of $\ttb_{g}$ as $g \overset{law}{\sim} \mu^{*n}$, we are then led to proving the following non-concentration estimates:
\[ \sup_{W\in \Gr(\kg, 2)}\sigma^{(n)} \{ s \, :\,  \dang( \Ad(u(-s)) \kg_{-}, W)\leq \rho\} \ll \rho^\kappa, \quad \text{and}\]
\[ \sup_{W\in \Gr(\kg, 1)} \sigma^{(n)}\{ s \, :\, \dang( \Ad(u(-s)) \kg_{-,0}, W)\leq \rho\} \ll \rho^\kappa.\]

Let us check the first estimate, where $W\in \Gr(\kg, 2)$. Set $e_{-,0}=e_{-}\wedge e_{0}$,  $e_{-,+}=e_{-}\wedge e_{+}$,  $e_{0,+}=e_{0}\wedge e_{+}$. 
Write $\wedge^2 W= \R(a e_{-,0}+be_{-,+}+ce_{0,+})$ where $a,b,c\in \R$ satisfy $\max(|a|,|b|,|c|)=1$. Then direct computation yields for any $s\in \R$, 
\[\dang(\Ad(u(-s)) \kg_{-}, W) \simeq \frac{\abs{a s^2 -bs-c}}{s^2+|s|+1}.\] 
Hence $\dang(\Ad(u(-s)) \kg_{-}, W) \leq \rho$ implies either $\abs{s} \geq \rho^{-1/3}$ or $\abs{a s^2 - b s - c} \ll \rho^{1/3}$.
Applying respectively \Cref{posdim-sigma} and \Cref{computing-non-concentration}, we obtain the desired non-concentration.

The second estimate is similar: writing $W= \R(a e_{+}+ b e_{0}+ c e_{-})$ where $\max(\abs{a},\abs{b},\abs{c})=1$, we find 
$\dang( \Ad(u(-s)) \kg_{-,0}, W) \simeq \frac{\abs{a - 2 b s - c s^2}}{1+|s|+s^2}$.
\end{proof}

\subsection{Dimension increment}

In this subsection, we apply the multislicing estimate from \Cref{multislicing} to show that convolution by a well chosen power of $\mu$ increases dimensional properties of a measure at a given scale.

\begin{proposition}[Dimension increment] \label{dim-increment}
Let $\kappa, \eps, \rho \in (0,1/10)$, $\alpha \in {[\kappa, 1-\kappa]}$, $\tau \geq 0$ be some parameters. 
Consider on $X$ a Borel measure $\nu$  which is $(\alpha,   \cB_{[\rho, \rho^{\eps}]}, \tau)$-robust. Denote by $n_{\rho} \geq 0$  the integer part of $\frac{1}{2 \Lyap}|\log \rho|$.

Assume $\eps,  \rho \lll_{ \kappa} 1$,
then
$$
\text{$\mu^{*n_{\rho}}*\nu$ is  $(\alpha+\eps, \cB_{\rho^{1/2}}, \tau+\rho^{\eps})$-robust}.
$$
\end{proposition}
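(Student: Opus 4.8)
The plan is to push the robust decomposition $\nu=\nu'+\nu''$ through the convolution. Write $N:=n_\rho$; by the law of large numbers for the multiplicative cocycle $g\mapsto\ttr_g$ (of mean $-\Lyap$, see \eqref{def-Lyap}) a $\mu^{*N}$-typical $g$ satisfies $\ttr_g\simeq e^{-\Lyap N}\simeq\rho^{1/2}$. Since $\mu^{*N}*\nu''$ has total mass $\nu''(X)\le\tau$, it suffices to split $\mu^{*N}*\nu'=\lambda_1+\lambda_2$ with $\lambda_2(X)\ll\rho^{\eps}$, with $\lambda_1$ carried by $\{\inj\ge\rho^{1/2}\}$, and with $\lambda_1(B_{\rho^{1/2}}y)\le\rho^{\frac32(\alpha+\eps)}$ for every $y\in X$; then $\lambda_1$ and $\lambda_2+\mu^{*N}*\nu''$ exhibit $\mu^{*N}*\nu$ as $(\alpha+\eps,\cB_{\rho^{1/2}},\tau+\rho^{\eps})$-robust.

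First I would strip a few harmless pieces of $\mu^{*N}*\nu'$ into $\lambda_2$: the part carried by $\{\inj<\rho^{1/2}\}$, of mass $\ll\rho^{c}$ for some $c=c(\kappa,\mu)>0$, by the effective recurrence estimate \Cref{effective-recurrence} applied from points of $\supp\nu'$, where $\inj\ge\rho^{\eps}$, the supremum of the input scale interval — legitimate once $\eps\lll1$, so that the cusp term $\inj(x)^{-c}e^{-c'N}$ stays bounded (recall $N\simeq|\log\rho|$); and the push-forward under the set $B$ of those $g$ violating one of $\ttr_g\in[e^{-\eps N}\rho^{1/2},e^{\eps N}\rho^{1/2}]$, $L_{\theta_g}\le\rho^{-\eps}$, $|\ttb_g|\le\rho^{-\eps}$, which has $\mu^{*N}(B)\ll\rho^{c}$ by, respectively, the large deviation principle for $\log\ttr_g$, \Cref{lm:nondeg}, and \Cref{posdim-sigma}(i). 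Set $\cG:=P\setminus B$.

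The heart of the argument is to bound, for fixed $y$ and each $g\in\cG$, the quantity $\nu'(g^{-1}B_{\rho^{1/2}}y)$ after removing a further small portion of $\nu'$. From $g^{-1}=u(-\ttb_g)a(\ttr_g)$ and $\varphi_{\theta_g}=(\Psi|_{\cdot})^{-1}\circ\sC_{\theta_g^{-1}}$ one gets $\varphi_{\theta_g}(g^{-1}B_r h)=\Psi^{-1}\bigl(a(\ttr_g)B_r(h\theta_g)\bigr)$, and \Cref{straightening} places this inside a translate of the box $\Ad(a(\ttr_g))B^\kg_{10r}$, of side-lengths $\simeq r\ttr_g^{-1}$ along $e_-$, $r$ along $e_0$, $r\ttr_g$ along $e_+$, provided $r\ttr_g^{-1}\le r_0$. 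For $g\in\cG$ this holds once $r\ll e^{-\eps N}\rho^{1/2}$; covering $B_{\rho^{1/2}}y$ by $O(\rho^{-O(\eps)})$ balls of radius $r\simeq e^{-\eps N}\rho^{1/2}$ makes all the associated boxes have extents $\ll1$, $\ll\rho^{1/2}$, $\ll\rho$ along $e_-,e_0,e_+$, hence contained in translates of $R_\rho$, and intersecting each with the embedded balls $B_{\rho^{\eps}/2}(x_i)$ covering $\supp\nu'$ (a further $O(\rho^{-\eps})$ factor, as each box is macroscopic only along $e_-$) expresses $\nu'(g^{-1}B_{\rho^{1/2}}y)$ as a sum of $O(\rho^{-O(\eps)})$ quantities of the form $\nu'|_{B_{\rho^{\eps}/2}(x_i)}\bigl(\varphi_{\theta_g}^{-1}(\text{translate of }R_\rho)\bigr)$. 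To each I apply the multislicing theorem \Cref{multislicing}, in the $\Psi$-chart around $x_i$ (where $\nu'$ lifts to a measure on $B^\kg_{\rho^\eps}$ of normalised dimension $\ge\alpha$ at every scale in $[\rho,\rho^{\eps_0}]$, valid since $\eps\le\eps_0=\eps_0(\kappa)$), with the parameter called $\kappa$ in that theorem chosen small in terms of $\kappa$ and $\mu$, and with $\Xi$ the law of $\theta_g=u(-\ttb_g)$ under $\mu^{*N}$ conditioned on $\cG$: its hypothesis (i) is the distortion bound $L_{\theta_g}\le\rho^{-\eps}\le\rho^{-\eps_0}$ built into $\cG$, and its hypothesis (ii) — non-concentration of $(D_h\varphi_{\theta_g})^{-1}V_k$ for $V_1=\kg_-$ and $V_2=\kg_{-,0}$ — is precisely \Cref{lm:noncon}, which by the calibration $N=n_\rho$ is available at the scales \Cref{multislicing} requires. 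The conclusion is: off a $\Xi$-set and a $\nu'$-set each of mass $\ll\rho^{\eps_0}$, every such term is $\le\rho^{\frac32\alpha+\eps_0}$, so summing over the $O(\rho^{-O(\eps)})$ of them, $\nu'(g^{-1}B_{\rho^{1/2}}y)\le\rho^{\frac32\alpha+\eps_0-O(\eps)}\le\rho^{\frac32(\alpha+\eps)}$ provided $\eps\lll_{\kappa,\mu}1$.

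Finally I would assemble: into $\lambda_2$ also go the $g$-push-forward ($g\in\cG$) of the part of $\nu'$ not controlled by the multislicing step (of total mass $\ll\rho^{\eps_0}$ by Fubini), and $\lambda_1$ is what remains of $\mu^{*N}*\nu'$; then $\lambda_1$ is carried by $\{\inj\ge\rho^{1/2}\}$ and satisfies $\lambda_1(B_{\rho^{1/2}}y)\le\rho^{\frac32(\alpha+\eps)}$ for all $y$ by the previous paragraph, while $\lambda_2(X)\ll\rho^{\min(c,\eps_0)}\ll\rho^{\eps}$ after shrinking $\eps$, which proves the proposition. I expect the main obstacle to be the geometry in the third paragraph: the preimage $g^{-1}B_{\rho^{1/2}}y$ of a small ball under a typical $g$ is genuinely \emph{macroscopic} along the flow direction $e_-$, so it must be cut into $O(\rho^{-O(\eps)})$ pieces fitting the embedded $\rho^{\eps}$-charts carrying $\nu'$ and matched, piece by piece, to the rectangles $R_\rho$ for which the multislicing estimate of \cite{BH24} is available, while at the same time its non-concentration hypothesis must be verified down to the finest relevant scale — this is where the calibration $N=n_\rho$ and \Cref{lm:noncon} enter. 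The rest reduces to routine large-deviation and measure-splitting arguments.
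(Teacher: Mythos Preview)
The proposal is correct and follows essentially the same route as the paper's proof: localize $\nu'$ on small balls so as to lift to $G$, restrict $\mu^{*N}$ to a large-deviation-good set of $g$'s with controlled $\ttr_g$, $\ttb_g$, and distortion $L_{\theta_g}$, apply the multislicing theorem (\Cref{multislicing}) in the straightening charts $\varphi_{\theta_g}$ using \Cref{lm:noncon} for the non-concentration hypothesis, translate the resulting rectangle bound into a ball bound via \Cref{straightening}, and absorb all exceptional sets and the cusp contribution (handled by \Cref{effective-recurrence}) into the residual part. The only organizational difference is that you cover the \emph{target} ball $B_{\rho^{1/2}}y$ by $O(\rho^{-O(\eps)})$ sub-balls before invoking the straightening lemma, whereas the paper first decomposes $\nu'=\sum_i \nu_i*\delta_{x_i}$ on $\rho^{2\eps}$-balls and then records that each $(g^{-1}B_{\rho^{1/2}}y)\cap B_{\rho^{2\eps}}x_i$ is covered by $\rho^{-O(\eps)}$ rectangles; these are equivalent bookkeeping choices and lead to the same estimate $\rho^{\frac32\alpha+\eps_1-O(\eps)}$.
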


\noindent\emph{Remark}.
Recall here that $\Lyap$ denotes the Lyapunov exponent of the $\Ad_{\star}\mu$-walk on $\kg$. Hence our choice for $n_{\rho}$ guarantees that  the operator norm of $\Ad g^{-1}$ is roughly $\rho^{-1/2}$ when $g\overset{law}{\sim} {\mu^{*n_{\rho}}}$.

\begin{proof}
In the proof, we may allow $\rho$ to be small enough in terms of $\eps$ (not just $\Lambda, \mu, \kappa$). We may also assume $\tau=0$. We will write $n=n_{\rho}$, and $\|\cdot\|$ the total variation norm on signed measures.

Note that the compact set $X_{\rho^\eps}:=\{\inj \geq \rho^\eps\}$ can be covered by $\rho^{-O(\eps)}$ balls of radius $\rho^{2\eps}$, more precisely 
\[X_{\rho^\eps} \subseteq \cup_{i\in I} B_{\rho^{2 \eps}}x_{i}\]
where $\sharp I\leq \rho^{-O(\eps)}$, $x_{i}\in X_{\rho^\eps}$ for all $i$. As $\nu$ is supported on $X_{\rho^\eps}$, we can then write 
\[\nu=\sum_{i\in I} \nu_{i}*\delta_{x_{i}}\]
where  $\nu_{i}$ is a Borel measure on $ G$ with support in $B_{\rho^{2 \eps}}$.
Note that  the assumption that $\nu$ is $(\alpha,  \cB_{[\rho, \rho^{\eps}]}, 0)$-robust implies that each $\nu_{i} * \delta_{x_i}$ is $(\alpha, \cB_{[\rho, \rho^{\eps}]}, 0)$-robust. 
It follows that for each $i \in I$, $\nu_i$ satisfies the non-concentration property
\[
\forall r \in [\rho, \rho^{\eps}],\quad \sup_{h \in G} \nu_i(B_r h) \leq r^{3\alpha}.
\]

We now apply \Cref{multislicing} to each $\nu_{i}$. We consider the family of charts $\varphi_{\theta} : \mathcal{U}\rightarrow \kg$ introduced in \Cref{Sec-non-concentration}. In order to guarantee the {distortion control} requirement for $\varphi_{\theta}$, we introduce the renormalized truncation of $\mu^{*n}$ defined by
\[\mu'_{n}= \frac{\mu^{*n}_{|L_{\theta_{g}}\leq \rho^{-\eps}}}{\mu^{*n}\{L_{\theta_{g}}\leq \rho^{-\eps}\}}. \]
By \Cref{lm:nondeg}, this probability measure satisfies $\|\mu'_{n}-\mu^{*n} \| \leq \rho^{\gamma}$ for some $\gamma=\gamma(\mu, \eps)>0$. In particular, provided $\rho \lll_{\eps}1$, the measure $\mu'_{n}$ also satisfies the non-concentration estimates  from \Cref{lm:noncon}. This allows us to apply  \Cref{multislicing} with 
 $\Xi$ the law of $\theta_{g}$ when $g \overset{law}{\sim} {\mu'_{n}}$ ($n=n_{\rho}$). We obtain some constant  $\eps_{1}>0$ depending only on $\kappa$, $\mu$ such that up to assuming $\eps \lll_{\kappa}1$, $\rho\lll_{\kappa, \eps}1$, there exists $\G_{i}\subseteq P$ with ${\mu'_{n}}(\G_{i})\geq 1-\rho^{\eps_{1}}$ satisfying for every $g\in \G_{i}$, that there exists a Borel measure $\nu_{i,g}\leq \nu_{i}$ with $\nu_{i,g}(G)\geq \nu_{i}(G)-\rho^{\eps_{1}}$ and such that 
\begin{equation} \label{maj-nuig}
\sup_{Q\in \cR_{\rho} }\nu_{i,g}(\varphi_{\theta_{g}}^{-1}Q) \leq \rho^{\frac{3}{2}\alpha+\eps_{1}}. 
\end{equation}

On the other hand, the large deviation principle for the walk on $\R$ driven by $-\log \ttr_g \dd \mu(g)$ guarantees that
\[\text{the set $\G_{\ttr}=\{\, g\,:\,{\ttr_g^{-1}} \in [\rho^{-1/2+\eps}, \rho^{-1/2-\eps}] \,\}\,\,$ satisfies $\,\,{\mu'_{n}(\G_{\ttr})} \geq 1- \rho^{\eps_{2}}$}\]
for some $\eps_{2}=\eps_{2}(\mu, \eps)>0$. 

Setting $\G_{i,\ttr}=\G_{i} \cap \G_{\ttr}$ and using \Cref{straightening}, observe that for $i\in I$, $g\in \G_{i,\ttr}$, for any ball $B_{\rho^{1/2}}y$  where $y\in X$, the intersection  $(g^{-1}B_{\rho^{1/2}}y) \cap B_{\rho^{2\eps}}x_{i}$ lifted to $B_{\rho^{2\eps}}$ is included in at most $\rho^{-O(\eps)}$ blocks of the form $\varphi_{\theta_{g}}^{-1}Q$ where $Q\in \cR_{\rho}$.  Hence, we get from \eqref{maj-nuig},
\begin{equation} \label{gnuig}
\sup_{y\in X} \delta_{g}*\nu_{i,g}(B_{\rho^{1/2}}y) \leq \rho^{\frac{3}{2}\alpha+\eps_{1}-O(\eps)}. 
\end{equation}

Setting $\G=\cap_{i\in I} \G_{i,\ttr}$ and recalling $\sharp I\leq \rho^{-O(\eps)}$, we have $\mu'_{n}(\G)\geq 1-\rho^{\eps_{1}-O(\eps)}-\rho^{\eps_{2}}$. We deduce 
\[\mu^{*n}( \{L_{\theta_{g}}\leq \rho^{-\eps} \}\cap \G) \geq 1-\rho^{\eps_{1}-O(\eps)}-\rho^{\eps_{2}}-\rho^{\gamma}.\]
  Letting 
\[m_{n}''={\mu^{*n}}* \nu- \int_{\{L_{\theta_{g}}\leq \rho^{-\eps} \}\cap \G} \sum_{i}\delta_{g}*\nu_{i,g} \dd\mu^{*n}(g),\]
and taking $\eps\lll_{\eps_{1}}1$, we have $\|m_{n}''\|\leq \rho^{\eps_{3}}$ where $\eps_{3}=\eps_{3}(\eps, \eps_{1}, \eps_{2}, \gamma)>0$, while we see from \eqref{gnuig}  that $m_{n}':=({\mu^{*n}}* \nu)-m_{n}''$  satisfies 
\[\sup_{y\in X} m'_{n}(B_{\rho^{1/2}}y) \leq \rho^{\frac{3}{2}\alpha+\eps_{1}/2}. \]

We now have checked  the required dimensional increment. 
In order to conclude, we also need to check that $\mu^{*n}*\nu$ does not give too much mass to the cusp. 
Indeed, \Cref{effective-recurrence} implies that for some constants $c,c'>0$ depending on $\mu$,  we have for all $x \in X$,
\[\mu^{*n}*\delta_{x}\{\inj < \rho^{1/2}\,\}\ll ({\inj^{-c}(x)} e^{-c'n} +1)\rho^{c/2}. \]
Integrating over $x$ with respect to $\nu$, imposing $\eps<c'/(2\Lyap c)$, and recalling that $\nu$ is supported on $\{\inj \geq \rho^\eps\}$ by assumption while $n=n_{\rho}$, we obtain $\mu^{*n} * \nu \{\inj < \rho^{1/2}\} \ll \rho^{c/2}$. 
\end{proof}

\subsection{Proof of high dimension}

We are finally able to show \Cref{high-dim}, namely that ${\mu^{*n}}*\delta_{x}$ reaches high dimension  exponentially fast. The proof starts from positive dimension given by \Cref{positive-dimension} and then proceeds by small increments using \Cref{dim-increment}. 
Note however that  \Cref{dim-increment} assumes non-concentration on a wide range of scales but the output dimensional increment only concerns a specific scale. Hence we need to combine those single-scale increments to allow iterating the bootstrap. For this, we rely on the following lemma.

\begin{lemma}\label{Lemma: combinatorial robust 2} 
    Let $\alpha,s,\rho\in (0,1]$, $\tau\in \mathbb{R}^+$ be parameters. If $\nu$ is $(\alpha,\mathcal{B}_{r},\tau)$-robust for all $r\in [\rho,\rho^s]$, then for any $\eps\in (0,\alpha)$, the measure $\nu$ is $(\alpha-\eps,\mathcal{B}_{[\rho,\rho^s]},\lceil \frac{\log s}{\log(1-\eps)}\rceil \tau)$-robust.
\end{lemma}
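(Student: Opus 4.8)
The statement is a purely combinatorial bootstrapping lemma: we know robustness holds at each individual scale $r$ in the dyadic-type range $[\rho,\rho^s]$, and we want a single robust decomposition valid simultaneously for all scales in that range, at the cost of losing $\eps$ in the dimension and multiplying the ``garbage'' mass $\tau$ by roughly $\log s / \log(1-\eps)$. The natural approach is to pick a geometric sequence of scales $r_k = \rho^{(1-\eps)^k}$ interpolating between $\rho$ (for $k=0$) and $\rho^s$ (for $k = N$ with $N = \lceil \frac{\log s}{\log(1-\eps)}\rceil$), apply the hypothesis at each $r_k$ to get decompositions $\nu = \nu_k' + \nu_k''$ with $\nu_k''(X) \le \tau$, and then take $\nu' := \min_k \nu_k'$ (or equivalently subtract off the union of the small pieces), so that $\nu'' := \nu - \nu'$ has total mass at most $N\tau$.

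\textbf{Key steps, in order.} First I would fix $N = \lceil \frac{\log s}{\log(1-\eps)}\rceil$ and set $r_k = \rho^{(1-\eps)^k}$ for $k = 0, 1, \dots, N$; note $r_0 = \rho$, $r_N \le \rho^s$ (since $(1-\eps)^N \ge s$ fails — careful here: $(1-\eps)^N \le s$, so $r_N = \rho^{(1-\eps)^N} \ge \rho^s$; we want the range $[\rho, \rho^s]$ covered, so I'd check the endpoints land correctly, possibly adding $r_N := \rho^s$ as the last node). The crucial observation is that consecutive nodes satisfy $r_{k+1} = r_k^{1-\eps}$, i.e.\ $r_{k+1}$ and $r_k$ are within a controlled multiplicative window, so that a ball $B_r y$ with $r \in [r_{k+1}, r_k]$ is squeezed: $B_r y \subseteq B_{r_k} y$ and $r \ge r_{k+1} = r_k^{1-\eps}$, hence $\nu_k'(B_r y) \le \nu_k'(B_{r_k} y) \le r_k^{3\alpha} = r^{3\alpha(1-\eps)^{-1}} \ge r^{3\alpha} \cdot r^{3\alpha((1-\eps)^{-1}-1)}$ — wait, I need the inequality to go the right way, so I should instead bound $\nu_k'(B_r y) \le r_k^{3\alpha}$ and compare $r_k^{3\alpha}$ to $r^{3(\alpha-\eps)}$: since $r \le r_k$ and we've lost dimension down to $\alpha - \eps$, we need $r_k^{3\alpha} \le r^{3(\alpha-\eps)}$, i.e.\ $3\alpha \log r_k \le 3(\alpha-\eps)\log r$, which holds iff $\frac{\alpha}{\alpha-\eps} \ge \frac{\log r}{\log r_k} \ge \frac{\log r_{k+1}}{\log r_k} = 1-\eps$ — this is true since $\frac{\alpha}{\alpha-\eps} > 1 > 1-\eps$. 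So the dimension loss $\eps$ exactly absorbs the scale-window distortion. Second, I would set $\nu'' := \sum_{k=0}^{N-1}\nu_k''$ restricted appropriately — actually cleaner: let $\nu' := \nu - \nu''$ where $\nu''$ is built so that on each window $[r_{k+1}, r_k]$ the measure $\nu'$ is dominated by $\nu_k'$; the simplest clean construction is $\nu' = \nu - \sum_k \nu_k''$ is not quite a measure, so instead define $\nu'$ via $\nu'(E) = \nu(E) - (\text{total of the bad parts})$, or most robustly: take $\nu'$ to be the largest measure $\le \nu_k'$ for every $k$, i.e.\ $\nu' = \bigwedge_{k=0}^{N} \nu_k'$. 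Then $\nu - \nu' \le \sum_k (\nu - \nu_k') = \sum_k \nu_k''$ has mass $\le (N+1)\tau$ — I'd need to check this matches $\lceil \frac{\log s}{\log(1-\eps)}\rceil \tau$ exactly or adjust the node count to get $N$ rather than $N+1$ (dropping the redundant endpoint $r_0 = \rho$ if $\nu_0'$-type control at $\rho$ is subsumed, or being slightly more careful with which nodes are strictly needed). Third, I'd verify the injectivity-radius condition \eqref{eq:injrad}: each $\nu_k'$ vanishes on $\{\inj < r_k\}$, and since $r_N = \rho^s = \sup[\rho,\rho^s]$, taking $\nu' \le \nu_N'$ gives $\nu'\{\inj < \rho^s\} = 0$ as required.

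\textbf{Main obstacle.} The mathematical content is light; the real care is bookkeeping: (a) getting the exact node count so the final mass bound reads $\lceil \frac{\log s}{\log(1-\eps)}\rceil \tau$ and not that plus one, which forces a precise choice of how many interpolating scales are genuinely needed (likely the endpoints $\rho$ and $\rho^s$ can be handled by the extreme nodes so exactly $N$ windows suffice), and (b) the min-of-measures construction $\nu' = \bigwedge_k \nu_k'$ must be checked to be a genuine Borel measure and to satisfy both \eqref{eq:rob1} on the full range and \eqref{eq:injrad}; the inequality chain showing $\nu'(B_r y) \le r^{3(\alpha-\eps)}$ for $r$ in a window, using only $\nu'(B_r y) \le \nu_k'(B_{r_k}y) \le r_k^{3\alpha}$ and the elementary comparison of exponents above, is the one computation to do carefully but it is routine. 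I expect the proof to be about half a page.
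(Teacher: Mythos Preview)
Your approach is correct and is essentially the same as the paper's. The paper packages the argument into two observations---(1) single-scale $(\alpha,\cB_r,\tau)$-robustness automatically gives $(t\alpha,\cB_{[r^{1/t},r]},\tau)$-robustness for any $t\in(0,1)$, and (2) robustness on $I_1$ and $I_2$ combine to robustness on $I_1\cup I_2$ with the $\tau$'s added---and then applies them with $t=1-\eps$ at a geometric sequence of scales, exactly as you propose; your $\bigwedge_k \nu_k'$ construction is precisely what underlies observation (2). One caution on your sketch: with $r_k=\rho^{(1-\eps)^k}$ the sequence is \emph{increasing}, so your window should read $r\in[r_k,r_{k+1}]$ and you should use the decomposition at the \emph{upper} endpoint $r_{k+1}$ (so that $B_r y\subseteq B_{r_{k+1}}y$); the exponent comparison then reduces to $\alpha(1-\eps)\ge \alpha-\eps$, i.e.\ $\alpha\le 1$, which is where the hypothesis $\alpha\in(0,1]$ enters. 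With that correction, your count of $N=\lceil\log s/\log(1-\eps)\rceil$ scales (using $r_1,\dots,r_{N-1},\rho^s$, dropping $r_0=\rho$) is exact.
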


\begin{proof}
This is just a combination of {two} observations (1) 
if $\nu$ is $(\alpha,\mathcal{B}_{r},\tau)$-robust, then for every $t\in (0,1)$, it is $(t\alpha, \mathcal{B}_{[r^{1/t},r]},\tau)$-robust; (2) if $\nu$ is $(\alpha,\mathcal{B}_{I_1},\tau_1)$-robust and $(\alpha,\mathcal{B}_{I_2},\tau_2)$-robust, then $\nu$ is $(\alpha,\mathcal{B}_{I_1\cup I_2},\tau_1+\tau_2)$-robust. See \cite[Lemma 4.5]{BH24} for details. 
\end{proof}

\begin{proof}[Proof of \Cref{high-dim}] 
Let $A>0$ be a large enough constant depending on the initial data $\Lambda, \mu$. Combining \Cref{positive-dimension} and \Cref{effective-recurrence}, we may assume $\kappa>0$ small enough from the start, so that for any $M>0$, for every $\rho\lll_{M}1$ and $n\geq M |\log \rho|+ A|\log \inj(x)|$,  the measure
\[\text{{$\mu^{*n}*\delta_{x}$} is $(\kappa, \cB_{[\rho^M, \rho^{1/M}]}, \rho^{\kappa/M})$-robust}. \]

Let $\eps_0, \rho_0 \in (0, 1/2)$ be constants depending only on $\kappa$ such that  the conclusion of \Cref{dim-increment} holds for all $\alpha \in [\kappa, 1 - \kappa]$, $\eps \leq \eps_0$, and $\rho \leq \rho_0$.
Fix $\eps = \eps_0 /2$.
Let $K = \left\lfloor \frac{1 - 2 \kappa}{\eps}\right\rfloor + 1$ and then
$M = \eps^{-K}$.
Finally, let $\rho \leq \rho_0^M$ with $\rho\lll_{M}1$ as in the first paragraph.
We show by  induction that for every integer $0 \leq k \leq K $,
\begin{equation}\label{eq:ih-incre}
\begin{split}
\forall n \geq \,& t_k  := \left(1 + \frac{k}{2 \Lyap}\right)M \abs{\log \rho} + A \abs{\log \inj(x)},\\
& \mu^{* n} * \delta_x \text{ is } \bigl(\kappa + k \eps, \cB_{\bigl[\rho^{M/2^k},\, \rho^{1/(2^k \eps^k M)}\bigr]}, O_{\kappa,k}(\rho^{\kappa/M})\bigr)\text{-robust.}
\end{split}
\end{equation}
Taking   $k=K$ in \eqref{eq:ih-incre},  we obtain \Cref{high-dim} since $\kappa + K \eps \geq 1 - \kappa$ and the interval $[\rho^{M/2^K}, \rho^{1/(2^K \eps^K M)}\bigr]$ contains $\rho$.

It remains to show \eqref{eq:ih-incre}  by induction on $k$.
The base case $k=0$ is given by the discussion in the first paragraph.
We now assume that \eqref{eq:ih-incre} holds for some $k < K $, and we prove it for $k + 1$.

Let $n \geq t_{k+1}$.
For every $r \in [\rho^{M/2^k}, \rho^{1/(2^k \eps^{k+1}M)}]$, write 
$n = \lfloor\frac{1}{2\Lyap} \abs{\log r}\rfloor + n'$ where $n' = n - \lfloor\frac{1}{2\Lyap} \abs{\log r}\rfloor \geq t_k$.
Apply \Cref{dim-increment} to the scale $r$ and the measure $\mu^{*n'} * \delta_x$ which we know from \eqref{eq:ih-incre} is $(\kappa + k \eps, \cB_{[r,r^\eps]}, O_{\kappa,k}(\rho^{\kappa/M}))$-robust.
We obtain that $\mu^{*n}*\delta_x$ is $(\kappa + (k + 2)\eps, \cB_{r^{1/2}}, O_{\kappa,k}(\rho^{\kappa/M}) + r^\eps)$-robust.
This being true for all $r \in [\rho^{M/2^k}, \rho^{1/(2^k \eps^{k+1}M)}]$, we can use \Cref{Lemma: combinatorial robust 2} to conclude the proof of the induction step.
\end{proof}

\section{From high dimension to equidistribution} \label{Sec-equidistribution}

We consider the one-parameter family of probability measures $(\eta_{t})_{t>0}$ on $G$ given  by 
\[\eta_t=a(t)u(s) \dd\sigma(s).\]
We show \Cref{endgame}, stating  that as $t\to +\infty$, a  probability measure on $X$ with dimension close to $3$ equidistributes under the $\eta_{t}$-process toward the Haar measure on $X$, and does so with exponential rate.
From this we deduce \Cref{ThB'} (whence \ref{Kintchine-dynamics}) and \Cref{ThC'}  (whence \ref{mu^n-equidistribution}).

\begin{proposition} \label{endgame}
There exist $\kappa, \rho_{0}>0$ such that the following holds for all $\rho \in (0,\rho_{0}]$, $\tau\in \R^+$. 

Let $\nu$ be a Borel  measure on $X$ that is $(1-\kappa, \cB_{\rho}, \tau)$-robust and has mass at most $1$. Then for any $t \in [\rho^{-1/4},\rho^{-1/2}]$, for any $f \in B^{\infty}_{\infty,1}(X)$ with $m_{X}(f)=0$,  we have 
\[
\abs{\eta_{t}*\nu(f)} \leq  (\rho^\kappa +\tau) \cS_{\infty,1}(f).
\]
\end{proposition}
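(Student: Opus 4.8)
\textbf{Proof strategy for Proposition~\ref{endgame}.} The plan is to exploit the spectral gap of the convolution operator $f \mapsto \mu * f$ on $L^2_0(X)$, where $L^2_0(X)$ denotes the orthogonal complement of the constants. First I would decompose $\nu = \nu' + \nu''$ according to the robustness hypothesis, with $\nu''(X) \leq \tau$ and $\nu'$ supported on $\{\inj \geq \rho\}$ satisfying the non-concentration bound $\nu'(B_\rho y) \leq \rho^{3(1-\kappa)}$ for all $y$. The contribution of $\nu''$ to $\eta_t * \nu(f)$ is bounded by $\tau \|f\|_\infty \leq \tau\, \cS_{\infty,1}(f)$ trivially, so it suffices to control $\eta_t * \nu'(f)$. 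The key point is that $\nu'$, being a non-concentrated measure of almost full dimension on $X$, is quantitatively close in a suitable weak sense to (a multiple of) the Haar measure $m_X$ — or rather, it can be thickened at scale $\rho$ to something comparable to Haar, modulo a small error.

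\textbf{Key steps.} The main steps, in order, would be: (1) Relate $\eta_t * \nu'$ to a random-walk average: by Lemma~\ref{lm:cocycle} (the cocycle relation connecting expanding translates of $\sigma$ with the $\mu$-walk — stated earlier in the paper), write $\eta_t * \nu'(f)$ in terms of $\mu^{*n} * \nu'(f)$ for $n \simeq \log t \simeq |\log\rho|$, up to controllable errors coming from the discrepancy between $\sigma$ and $\sigma^{(n)}$ (here Lemma~\ref{eq-expconv} gives the exponential rate). (2) Smooth $\nu'$ at scale $\rho$: convolve with a bump function $\beta_\rho$ supported on $B_\rho$ with $\cS_{\infty,1}$-type bounds $\|\beta_\rho\|_{L^1}=1$, $\|\beta_\rho\|_{L^\infty} \ll \rho^{-3}$; the non-concentration bound \eqref{eq:rob1} ensures $\nu' * \beta_\rho$ has $L^\infty$-norm $\ll \rho^{-3\kappa}$, hence $L^2$-norm $\ll \rho^{-3\kappa/2}$ (using total mass $\leq 1$), and replacing $\nu'$ by $\nu' * \beta_\rho$ changes $\eta_t * \nu'(f)$ by at most $O(\rho \cS_{\infty,1}(f))$ since $f$ is Lipschitz. (3) Apply the spectral gap: $\mu$ is a probability measure on $P$ whose support is not simultaneously diagonalizable and with the positive-drift condition, so by the work on property $(\tau)$ / spectral gap for such measures (this is invoked in the paper's outline "once the dimension is close to full, we conclude using the spectral gap"), the operator $\mu * (\cdot)$ contracts $L^2_0(X)$ at a definite exponential rate, giving $\|\mu^{*n} * (\nu' * \beta_\rho) - c\, m_X\|_{L^2} \leq e^{-c_1 n} \|\nu' * \beta_\rho\|_{L^2} \ll e^{-c_1 n} \rho^{-3\kappa/2}$, where $c = \nu'(X) \leq 1$. (4) Pair against $f$: $|\langle \mu^{*n} * (\nu'*\beta_\rho) - c\,m_X, f\rangle| \leq \|\mu^{*n}*(\nu'*\beta_\rho) - c\,m_X\|_{L^2}\|f\|_{L^2} \ll e^{-c_1 n}\rho^{-3\kappa/2}\cS_{\infty,1}(f)$, and since $m_X(f) = 0$ the $c\,m_X$ term drops out. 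With $e^{-c_1 n} \ll \rho^{c_1/4}$ (as $n \simeq \frac14|\log\rho|$ from $t \in [\rho^{-1/4},\rho^{-1/2}]$), choosing $\kappa$ small enough that $c_1/4 - 3\kappa/2 > \kappa$ yields the bound $\rho^\kappa \cS_{\infty,1}(f)$, and collecting the $\nu''$ and smoothing errors gives the claimed $(\rho^\kappa + \tau)\cS_{\infty,1}(f)$.

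\textbf{Main obstacle.} The delicate point is step (1)–(2): passing from the continuous-time $\eta_t$-picture to the discrete $\mu$-walk in a way that is compatible with the robustness hypothesis on $\nu$, and making sure the smoothing at scale $\rho$ does not destroy the non-concentration bound (it does not, because convolving a measure that charges every $\rho$-ball by at most $\rho^{3(1-\kappa)}$ with an $L^1$-normalized bump of width $\rho$ gives something of sup-norm $\ll \rho^{-3\kappa}$, which is exactly the gain we need). One also has to track carefully that the injectivity-radius restriction $\inj \geq \rho$ on the support of $\nu'$ is enough to make the smoothing operation well-defined on $X$ (the bump $\beta_\rho$ lifts to a genuine function on $X$ near $\operatorname{supp}\nu'$). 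The other potentially subtle issue is the precise form of the spectral gap needed: one needs it uniformly for the fixed measure $\mu$, which is available here since $\mu$ is fixed throughout and the non-arithmetic/Zariski-type hypotheses on $\supp\mu$ guarantee a uniform spectral gap on $L^2_0(G/\Lambda)$ for the relevant averaging operator; if a clean reference is unavailable one reduces to the $\SL_2(\Z)$ case and standard spectral gap there, transferring via the cusp-neighborhood comparison used already in the proof of Proposition~\ref{effective-recurrence}.
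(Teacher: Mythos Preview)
Your overall architecture---mollify $\nu'$ at scale $\rho$, then use a spectral gap to kill the main term---matches the paper. But the implementation has two real problems.

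\textbf{The detour through $\mu^{*n}$ (step (1)) does not work.} Lemma~\ref{lm:cocycle} does not say that $\eta_t$ is close to $\mu^{*n}$ for any $n$; it says $\eta_t = \int_P \eta_{t\ttr_g} * \delta_g \dd\mu^{*n}(g)$, a mixture that still involves the whole family $(\eta_{t'})$. The convergence $\sigma^{(n)}\to\sigma$ from Lemma~\ref{eq-expconv} concerns only the $\ttb$-marginal of $\mu^{*n}$; the $\ttr$-coordinate remains genuinely random, so $\mu^{*n}$ is not close to any single $\eta_t$. The paper avoids this entirely by proving a spectral-gap estimate directly for $P_{\eta_t}$ (Proposition~\ref{spectralgap-mu}), via decay of correlations~\eqref{eq:decay cor} and the H\"older regularity of $\sigma$.

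\textbf{The spectral gap you invoke is not the one that is available.} In step (3)--(4) you assert a pure $L^2_0\to L^2_0$ contraction $\|\mu^{*n}*h - c\,m_X\|_{L^2}\leq e^{-c_1 n}\|h\|_{L^2}$. For $\mu$ supported on the solvable group $P$, this is not established in the paper and is genuinely delicate (the subgroup generated by $\supp\mu$ is amenable, so the usual criteria fail; and your suggested reduction via cusp-neighborhood comparison cannot work, since spectral gap is a global property of the representation). What the paper proves is $\|P_{\eta_t} f\|_{L^2} \ll t^{-c}\,\cS_{2,1}(f)$, i.e.\ with a Sobolev loss. If you apply this on the density side as you propose, the derivative in $\cS_{2,1}(\nu'*\beta_\rho)$ costs an extra factor $\rho^{-1}$ and the estimate collapses. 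The paper instead applies the spectral gap to the \emph{test function}: write $\eta_t*\nu'(f)=\int P_{\eta_t}f\dd\nu'$, replace $\nu'$ by its mollification $\nu_\rho$ (the error here is $O(\rho\, t\,\cS_{\infty,1}(f))$, not $O(\rho\,\cS_{\infty,1}(f))$ as you wrote---you dropped the Lipschitz constant of $P_{\eta_t}$), and bound
\[
\abse{\int P_{\eta_t}f\dd\nu_\rho} \leq \|P_{\eta_t}f\|_{L^2}\,\Bigl\|\frac{\mathrm{d}\nu_\rho}{\mathrm{d}m_X}\Bigr\|_{L^\infty} \ll t^{-c}\,\cS_{2,1}(f)\,\rho^{-3\kappa}.
\]
Now the Sobolev loss lands on $f$, where it is harmless, and choosing $\kappa$ small relative to $c$ finishes the proof.
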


The argument relies on the quantitative decay of correlations for $X$.
Consider the unitary representation of $G$ on $L^2(X)$ defined by the formula $g.f = f \circ g^{-1}$.
From the combination of  \cite[Lemma 3]{Bekka98} and \cite[Equations (6.1), (6.9)]{EMV09}, we know there exists  $\delta_{0}=\delta_{0}(\Lambda)>0$ such that for any function $f \in B^{\infty}_{2,1}(X)$ with $m_X(f) = 0$, any $g \in G$, we have
\begin{equation}
\label{eq:decay cor}
 \abs{\langle g.f, f \rangle} \ll \norm{g}^{-\delta_{0}} \cS_{2,1}(f)^2.
\end{equation}

From this we deduce a spectral gap for the family of Markov operators $P_{\eta_{t}}$. Recall that $P_{\eta_{t}}$ is the operator acting on non-negative measurable functions on $X$ given by the formula
\[P_{\eta_{t}} f (x)= \int_{G} f(gx) \dd\eta_{t}(g).\]
$P_{\eta_{t}}$ extends continuously into an operator on $ L^2(X)$ of norm $1$. 

\begin{proposition}[Spectral gap for $P_{\eta_{t}}$] \label{spectralgap-mu}
There exists $c>0$ such that for any function $f \in B^{\infty}_{2,1}(X)$ with $m_{X}(f)=0$, we have 
\[\forall t > 1, \quad  \|P_{\eta_{t}} f\|_{L^2} \ll  t^{-c} \cS_{2,1}(f).\]
\end{proposition}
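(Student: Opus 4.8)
The plan is to exploit the decay of matrix coefficients \eqref{eq:decay cor} together with the explicit structure of $\eta_{t}=a(t)u(s)\dd\sigma(s)$. The key point is that $\|P_{\eta_t} f\|_{L^2}^2 = \langle P_{\eta_t} f, P_{\eta_t} f\rangle$ unfolds into a double integral $\iint \langle (a(t)u(s))^{-1}(a(t)u(s')).f, f\rangle \dd\sigma(s)\dd\sigma(s')$, and the inner group element is $u(-s)a(t^{-1})a(t)u(s') = u(s'-s)$, which has norm comparable to $|s'-s|$ when $|s'-s|$ is large. So the geodesic flow disappears entirely from this first computation, and naive application of \eqref{eq:decay cor} only gives $\iint \min(1,|s-s'|)^{-\delta_0}\dd\sigma(s)\dd\sigma(s')$, a finite constant with no decay in $t$. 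This is the main obstacle: a single application of $P_{\eta_t}$ is not enough, one must iterate.

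First I would instead estimate $\|P_{\eta_t}^2 f\|_{L^2}$ (or a higher power), equivalently pair $P_{\eta_t}^* P_{\eta_t}$ against itself more cleverly. Concretely, consider $\langle P_{\eta_t} P_{\eta_t} f, P_{\eta_t} P_{\eta_t} f\rangle$, which unfolds to an integral over $(s_1,s_2,s_1',s_2')$ of $\langle g.f,f\rangle$ with $g = (a(t)u(s_2)a(t)u(s_1))^{-1} a(t)u(s_2')a(t)u(s_1')$. Now the diagonal parts no longer cancel: writing $a(t)u(s_2)a(t)u(s_1) = a(t^2) u(t^{-1}s_2 + s_1)\cdot(\text{lower order})$, one sees $g$ has a genuine $a(t^2)$-type expansion in its entries whenever the horocyclic coordinates are separated, so $\|g\|$ is typically of size $\simeq t^{2}$ times a factor depending on the $s_i$. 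Using \eqref{eq:decay cor} and the Hölder regularity of $\sigma$ from \Cref{sigma Holder}(ii) to control the bad set where the coordinates are too close (which has $\sigma$-measure polynomially small in the separation scale), one gets a bound $\|P_{\eta_t}^2 f\|_{L^2}^2 \ll t^{-c'}\cS_{2,1}(f)^2$ for some $c'>0$. Since $P_{\eta_t}$ has operator norm $1$ on $L^2$, this immediately yields $\|P_{\eta_t} f\|_{L^2} = \|P_{\eta_{\sqrt t}}^{?}\dots\| $ — more precisely one relates $\eta_t$-iterates to $\eta_{\sqrt{t}}$ via the semigroup-like identity $P_{\eta_t}\circ P_{\eta_{t}} \ne P_{\eta_{t^2}}$ in general, so instead one simply records $\|P_{\eta_t} f\|_{L^2}\le \|P_{\eta_{\sqrt t}} f\|_{L^2}$ is false too; the clean route is to prove the two-step bound directly at parameter $t$, giving $\|P_{\eta_t} f\|_{L^2}\le \|P_{\eta_t}^2 f\|_{L^2}^{1/2}\cdot\|f\|_{L^2}^{1/2}$ by interpolation/Cauchy–Schwarz, hence $\ll t^{-c}\cS_{2,1}(f)$.

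So the key steps in order: (1) unfold $\|P_{\eta_t}^2 f\|_{L^2}^2$ into a quadruple $\sigma$-integral of matrix coefficients; (2) compute the relevant group element and show $\|g\|\gg t^2\cdot\Phi(s_1,s_1',s_2,s_2')$ off a small exceptional set, where $\Phi$ measures separation of the horocyclic parameters; (3) split the integral: on the main set apply \eqref{eq:decay cor} to gain a power of $t$, on the exceptional set use $|\langle g.f,f\rangle|\le\cS_{2,1}(f)^2$ together with the dimension bound \Cref{sigma Holder}(ii) on $\sigma$ to bound its measure; (4) optimize the cutoff scale to obtain $\ll t^{-c'}\cS_{2,1}(f)^2$; (5) deduce the one-step bound $\|P_{\eta_t}f\|_{L^2}\ll t^{-c}\cS_{2,1}(f)$ by Cauchy–Schwarz $\|P_{\eta_t}f\|_{L^2}^2 = \langle f, P_{\eta_t}^*P_{\eta_t} f\rangle \le \|f\|_{L^2}\|P_{\eta_t}^*P_{\eta_t}f\|_{L^2}$ and iterating/bounding $P_{\eta_t}^*P_{\eta_t}$ similarly, or more directly by noting $\|P_{\eta_t}f\|^4 \le \|f\|^2\,\|P_{\eta_t}^2 f\|^2$-type chains. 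The main obstacle, as noted, is step (2)–(3): making precise that after two applications of the $u(s)$-then-$a(t)$ operation the resulting matrix is genuinely large in norm away from a controllably small set, and that the self-similarity / Hölder regularity of $\sigma$ is exactly what kills that set. The non-arithmeticity hypothesis (matrices in $\supp\mu$ not simultaneously diagonalizable) is not needed here — only properties of $\sigma$ and the decay \eqref{eq:decay cor} — which is consistent with this being a "soft" spectral-gap input to the harder equidistribution argument.
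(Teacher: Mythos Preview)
Your proof has a computational error at the very start that sends you down an unnecessary detour. When you unfold $\|P_{\eta_t}f\|_{L^2}^2 = \iint \langle g^{-1}.f, h^{-1}.f\rangle\,d\eta_t(g)d\eta_t(h)$, unitarity gives $\langle g^{-1}.f, h^{-1}.f\rangle = \langle (hg^{-1}).f, f\rangle$, \emph{not} $\langle (g^{-1}h).f, f\rangle$: indeed $\int_X f(gx)\overline{f(hx)}\,dm_X(x) = \int_X f(gh^{-1}y)\overline{f(y)}\,dm_X(y)$. With $g=a(t)u(s)$, $h=a(t)u(s')$ one gets
\[
hg^{-1}=a(t)\,u(s'-s)\,a(t)^{-1}=u\bigl(t(s'-s)\bigr),
\]
so the $a(t)$'s sit on the \emph{outside} and conjugate the unipotent part, rather than cancelling in the middle as in your $g^{-1}h=u(s'-s)$. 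Thus $\|hg^{-1}\|\simeq \max(1,\,t|s-s'|)$ and the geodesic flow does not disappear. Applying \eqref{eq:decay cor} together with the H\"older regularity of $\sigma$ from \Cref{sigma Holder}(ii) (the set $\{|s-s'|\le t^{-1/2}\}$ has $\sigma\otimes\sigma$-measure $\ll t^{-c}$, and on the complement $\|hg^{-1}\|^{-\delta_0}\le t^{-\delta_0/2}$) gives the bound directly; this is exactly the paper's argument.

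Your proposed workaround via $P_{\eta_t}^2$ inherits the same mistake. With your ordering one finds
\[
\bigl(a(t)u(s_2)a(t)u(s_1)\bigr)^{-1}a(t)u(s_2')a(t)u(s_1')
= u\!\Bigl((s_1'-s_1)+\tfrac{s_2'-s_2}{t}\Bigr),
\]
again a pure unipotent with all diagonal parts cancelled; so your claim that ``the diagonal parts no longer cancel'' is false. Separately, step (5) is shaky: $\|P_{\eta_t}f\|_{L^2}^2=\langle P_{\eta_t}^*P_{\eta_t}f,f\rangle$ involves the adjoint $P_{\eta_t}^*=P_{\check\eta_t}$ (convolution by the image of $\eta_t$ under $g\mapsto g^{-1}$), not $P_{\eta_t}$, so you cannot pass to $P_{\eta_t}^2$ as written.
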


\begin{proof}
Using \eqref{eq:decay cor}, we have 
\begin{align*}
\|P_{\eta_{t}} f\|^2_{L^2} 
&= \iint_{ G^2}\langle g^{-1} .f, h^{-1}.f\rangle \dd\eta_{t}(g) \dd\eta_{t}(h)\\
&= \iint_{ G^2}\langle hg^{-1} .f, f\rangle \dd\eta_{t}(g) \dd\eta_{t}(h)\\
&\ll \cS_{2,1}(f)^2 \iint_{ G^2} \norm{hg^{-1}}^{-\delta_{0}} \dd\eta_{t}(g) \dd\eta_{t}(h)
\end{align*}
Plugging in the definition of $\eta_t$ 
\begin{align*}
    hg^{-1} \dd\eta_t(g)\dd\eta_t(h)=u(t(s_1-s_2)) \dd\sigma(s_1) \dd\sigma(s_2),
\end{align*}
we get
\[
\norm{P_{\eta_t}f}^2_{L^2} 
\ll \iint_{\R^2} \max \{1 , t |s_1-s_2| \}^{-\delta_{0}} \dd\sigma(s_1) \dd\sigma(s_2) \cS_{2,1}(f)^2.
\]
Finally, the H\"older regularity of $\sigma$ from \Cref{sigma Holder}(ii) implies that
\[\sigma^{\otimes 2}\underbrace{\{(s_1,s_2) : t |s_1-s_2|\leq t^{1/2}\}}_E \ll t^{-c},\]
for some constant $c=c(\sigma) > 0$. Hence,
\begin{align*}
&\iint_{\R^2}\max \{1 , t \abs{s_1-s_2} \}^{-\delta_{0}} \dd\sigma(s_1) \dd\sigma(s_2)\\
= &\iint_{E} 1 \dd\sigma \otimes \sigma + \iint_{\R^2\smallsetminus E} (t \abs{s_1-s_2})^{-\delta_{0}} \dd\sigma(s_1) \dd\sigma(s_2) \\
\ll & t^{-c}+t^{-\delta_{0}/2},
\end{align*}
concluding the proof. 
\end{proof}

To prove \Cref{endgame} we \emph{mollify} the measure $\nu$ at some scale $\rho> 0$: 
let $\nu_\rho$ be the Borel measure on $X$ defined by
\[\nu_{\rho}(f)= \frac{1}{m_G(B_{\rho})} \int_X \int_{B_\rho} f(gx) \dd m_{G}(g) \dd\nu(x),\]
where 
$f$ denotes here any non-negative measurable function on $X$. 

Note that if $\nu$ is supported on the compact set $\{\inj \geq  \rho\}$, then by a change of variable $g \in B_\rho \mapsto gx \in B_\rho x$ and  the Fubini-Lebesgue theorem, we have 
\begin{align*}
    \nu_{\rho}(f)&= \frac{1}{m_G(B_{\rho})} \iint_{X \times X} \1_{y \in B_\rho x} f(y) \dd m_X(y) \dd\nu(x)\\
    &= \frac{1}{m_G(B_{\rho})} \int_{X} f(y) \int_{X} \1_{x \in B_\rho y} \dd\nu(x) \dd m_X(y).
\end{align*}
This implies that $\nu$ is absolutely continuous with respect to $m_X$ and its Radon-Nikodym derivative is
\begin{equation} \label{formula-nud}
\frac{\mathrm{d}\nu_{\rho}}{\mathrm{d} m_{X}}(x)= \frac{\nu(B_\rho x)}{m_G(B_{\rho})}.
\end{equation}
In particular, if $\nu$ is $(1-\kappa, \cB_{\rho}, 0)$-robust, then 
\[
\normBig{ \frac{\mathrm{d}\nu_\rho}{\mathrm{d} m_X}}_{L^\infty} \ll \rho^{-3\kappa}.
\] 

\begin{proof}[Proof of \Cref{endgame}]
We let $\kappa, r, \rho_{0}>0$ be parameters to specify below,  $\rho, \tau, \nu$ as in the proposition, and consider a test function $f\in B^\infty_{\infty,1}(X)$ with zero average.  
Clearly we may assume $\tau=0$, i.e. $\nu$ is $(1-\kappa, \cB_{\rho}, 0)$-robust.

We can write for any  $t>1$, 
\begin{align*}
| \eta_{t}*\nu(f)|= \abse{\int_{X} P_{\eta_{t}} f \dd \nu} 
\leq \abse{\int_{X} P_{\eta_{t}} f \dd \nu_\rho}  + \abse{\int_{X} P_{\eta_{t}} f \dd \nu_\rho - \int_{X} P_{\eta_{t}} f \dd \nu}.
\end{align*}
The first term is bounded  by
\begin{align*}
\abse{\int_{X} P_{\eta_{t}} f \dd \nu_\rho} & \leq \norm{P_{\eta_{t}} f}_{L^1} \normBig{ \frac{\mathrm{d}\nu_\rho}{\mathrm{d} m_X}}_{L^\infty}\\
&\leq \norm{P_{\eta_{t}} f}_{L^2} \normBig{ \frac{\mathrm{d}\nu_\rho}{\mathrm{d} m_X}}_{L^\infty}\\
&\ll  t^{-c} \cS_{2,1}(f) \rho^{-3 \kappa},
\end{align*}
where the last inequality uses the spectral gap estimate from \Cref{spectralgap-mu} on the one hand,  and  the assumption that $\nu$ is $(1-\kappa, \cB_{\rho}, 0)$-robust on the other hand.  

From the definition of $\nu_{\rho}$, the second term is bounded by
\begin{equation*}
\abse{\int_{X} P_{\eta_{t}} f \dd \nu_\rho - \int_{X} P_{\eta_{t}} f \dd \nu} 
\leq   \rho  \cS_{\infty, 1}(P_{\eta_{t}} f) \ll  \rho t \cS_{\infty, 1}(f).
\end{equation*}

Put together, we have obtained 
\[
| \eta_{t}*\nu(f)| \ll \bigl(t^{-c} \rho^{-3\kappa} +  \rho t) \cS_{\infty,1}(f) \ll \rho^{\kappa} \cS_{\infty,1}(f) 
\]
where the last upper bound holds for $t \in [\rho^{-1/4}, \rho^{-1/2}]$, up to choosing $\kappa = c/16$ 
and $\rho_{0}$ is small enough in terms of $\kappa$.
\end{proof}

We  now address the

\begin{proof}[Proof of \Cref{ThB'}]
Note first that until now, we considered a measure $\lambda$ supported on $\Aff(\R)^+$ while \Cref{ThB'}  allows for a measure $\lambda$ on $\Aff(\R)$. We reduce easily to the $\Aff(\R)^+$-case via the following lemma. 

\begin{lemma} \label{red-Aff+}
We may assume the measure $\lambda$ is supported on $\Aff(\R)^+$. 
\end{lemma}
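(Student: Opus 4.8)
The plan is to reduce the study of a $\lambda$-stationary measure for $\lambda$ on $\Aff(\R)$ to one for a measure on the orientation-preserving subgroup $\Aff(\R)^+$, by passing to $\lambda*\lambda$. Since the sign of the dilation factor $\ttr_\phi$ defines a homomorphism $\Aff(\R)\to\{\pm1\}$, any product of two elements of $\Aff(\R)$ that has the same sign of dilation lands in $\Aff(\R)^+$; in particular $\supp(\lambda*\lambda)$ always decomposes according to this sign, and the "$++$" part together with the "$--$" part (which, being a product of two orientation-reversing maps, is again orientation-preserving) makes up a measure we can hope to work with. Concretely, I would set $\lambda_2 := \lambda*\lambda$ and observe $\lambda_2*\sigma = \sigma$, so $\sigma$ is $\lambda_2$-stationary; moreover $\lambda_2$ still has a finite exponential moment (the exponential moment is submultiplicative under convolution, using $|\ttr_{\phi\phi'}|=|\ttr_\phi||\ttr_{\phi'}|$ and $|\ttb_{\phi\phi'}|\le|\ttr_\phi||\ttb_{\phi'}|+|\ttb_\phi|$, together with the elementary inequality $|xy|^\eps\le\frac12(|x|^{2\eps}+|y|^{2\eps})$ after possibly shrinking $\eps$), and its support still has no global fixed point (a global fixed point of $\supp\lambda_2$ would, by stationarity and uniqueness à la Bougerol--Picard applied appropriately, force one for $\supp\lambda$; alternatively, non-simultaneous-diagonalisability is inherited since $\supp\lambda\subseteq\supp\lambda_2\cdot\supp\lambda^{-1}$ up to the usual manipulations, so if $\supp\lambda_2$ were simultaneously diagonalisable so would be a conjugate of $\supp\lambda$).

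The point of replacing $\lambda$ by $\lambda_2$ is that $\lambda_2$ is supported on $\Aff(\R)^+$: writing $\phi=\phi_1\phi_2$ with $\phi_i\in\supp\lambda$, one has $\ttr_\phi=\ttr_{\phi_1}\ttr_{\phi_2}>0$ regardless of the individual signs. Hence all the hypotheses under which the previously established results (the equidistribution of $\eta_t*\nu$, namely \Cref{endgame}, and the high-dimension bootstrap leading there) were proved now apply to the data $(\lambda_2,\mu_2,\sigma)$. The last ingredient is to check that the conclusion of \Cref{ThB'} for $\lambda_2$ implies it for $\lambda$: but the conclusion of \Cref{ThB'} — the effective equidistribution statement \eqref{eq-thB} — is a statement purely about $\sigma$ (the integral $\int_\R f(a(t)u(s)x)\dd\sigma(s)$ does not see $\lambda$ at all), so once it holds for the $\lambda_2$-stationary measure $\sigma$ it holds verbatim, with the same $\sigma$. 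The only subtlety is that the constant $c$ and the implicit constant in $O(\cdot)$ are now allowed to depend on $\lambda_2$ rather than $\lambda$, which is harmless since $\lambda_2$ is determined by $\lambda$.

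I would therefore organise the proof of \Cref{red-Aff+} as: (1) replace $\lambda$ by $\lambda*\lambda$ and note $\sigma$ remains stationary; (2) verify the finite exponential moment is preserved, via submultiplicativity of $\ttr,\ttb$ under composition and a shrinking of $\eps$; (3) verify that the absence of a global fixed point (equivalently, non-simultaneous-diagonalisability of $\supp\mu$) is preserved; (4) observe $\supp(\lambda*\lambda)\subseteq\Aff(\R)^+$; (5) note that the target statement \eqref{eq-thB} only involves $\sigma$, so proving it for $(\lambda*\lambda,\sigma)$ proves it for $(\lambda,\sigma)$. The main obstacle is step (3): one must be slightly careful because $\supp(\lambda*\lambda)$ could a priori be "smaller" than $\supp\lambda$ in a way that accidentally acquires a common fixed point — but this cannot happen, since a common fixed point $p$ of all products $\phi_1\phi_2$ with $\phi_i\in\supp\lambda$ forces, fixing $\phi_2$ and varying $\phi_1$, that all $\phi_1$ fix $\phi_2(p)$, and then varying $\phi_2$ one sees all of $\supp\lambda$ fixes a common point; so the reduction goes through. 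The remaining steps are routine bookkeeping and require no new idea beyond the results already in the excerpt.
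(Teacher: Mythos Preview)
There is a genuine gap in step (4). The claim that $\lambda*\lambda$ is supported on $\Aff(\R)^+$ is false in general: if $\phi_1$ is orientation-preserving and $\phi_2$ is orientation-reversing (or vice versa), then $\ttr_{\phi_1\phi_2}=\ttr_{\phi_1}\ttr_{\phi_2}<0$, so $\phi_1\phi_2\notin\Aff(\R)^+$. Your assertion ``$\ttr_\phi=\ttr_{\phi_1}\ttr_{\phi_2}>0$ regardless of the individual signs'' is simply wrong --- a product of two reals is positive only if they have the \emph{same} sign. Thus the substitution $\lambda\mapsto\lambda*\lambda$ achieves the reduction only in the degenerate case where $\supp\lambda$ lies entirely in one coset of $\Aff(\R)^+$; in the mixed case (some $\phi$ with $\ttr_\phi>0$, others with $\ttr_\phi<0$) the support of $\lambda*\lambda$ is again mixed. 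Restricting $\lambda*\lambda$ to its orientation-preserving part and renormalising does not obviously preserve stationarity of $\sigma$, so the whole plan collapses there.

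The paper's proof handles all cases at once via a stopping-time construction: on $\Omega=\Aff(\R)^{\N}$ one sets $\tau_+(\underline\phi)=\inf\{n\geq1:\ttr_{\phi_1\circ\dots\circ\phi_n}>0\}$ and defines $\lambda^{*\tau_+}$ as the law of $\phi_1\circ\dots\circ\phi_{\tau_+}$. This measure is supported on $\Aff(\R)^+$ by construction, has finite exponential moment because $\tau_+$ has exponential tails, and $\sigma$ is $\lambda^{*\tau_+}$-stationary by the strong Markov property. The no-common-fixed-point condition is checked by noting that a fixed point for $\supp\lambda^{*\tau_+}$ would give the group generated by $\supp\lambda$ an orbit of size at most two, hence a fixed barycenter. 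Your steps (1), (2), (5) are fine and indeed what the paper (implicitly) does; the missing idea is precisely the stopping time in place of the naive squaring.
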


\begin{proof}
Set $\Omega=\Aff(\R)^{\N}$, consider the stopping time $\tau_{+}:\Omega\rightarrow \N$ defined for $\underline{\phi}=(\phi_{i})_{i\geq 1}\in \Omega$ by
\[
\tau_{+}(\underline{\phi})=\inf\{n\geq 1\,:\,\ttr_{\phi_{1}\circ\dots \circ \phi_{n}}>0\}.
\]
Write $\lambda^{*\tau_{+}}=\int_{\Omega} \delta_{\phi_{1} \circ \dots \circ \phi_{\tau_{+}(\underline{\phi})}} \dd \lambda^{\otimes \N}(\underline{\phi})$.
Then by the strong Markov property, (see \cite[Lemme A.2]{Ben22}), the measure $\sigma$ is $\lambda^{*\tau_{+}}$-stationary. Moreover, $\lambda^{*\tau_{+}}$ has finite exponential moment (because $\tau_{+}$ does).
Its support $\supp \lambda^{*\tau_{+}}$ does not have common fixed point on $\R$, for otherwise the group generated by $\supp \mu$ would have an orbit of cardinality $2$ and hence fixes the barycenter of this orbit.
\end{proof}

Now that $\lambda$ is supported on $\Aff(\R)^+$, we denote by $\mu$ the corresponding measure on $P$. 
We relate the $\eta_{t}$-process  with the $\mu$-random walk thanks to the following lemma.
\begin{lemma}[$\eta_{t}$-process vs $\mu$-walk]\label{lm:cocycle} 
Given  $t>0$, $n\geq 0$,  we have
\[
\eta_t = \int_{P} \eta_{t \ttr_{g}} *\delta_{g} \dd\mu^{*n}(g).
\]
\end{lemma}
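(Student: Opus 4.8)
The plan is to prove the identity $\eta_t = \int_P \eta_{t\ttr_g} * \delta_g \, d\mu^{*n}(g)$ by induction on $n$, with the case $n=0$ being trivial (since $\mu^{*0} = \delta_{\Id}$ and $\ttr_{\Id} = 1$) and the inductive step reducing everything to the case $n=1$, i.e. the single identity
\[
\eta_t = \int_P \eta_{t\ttr_g} * \delta_g \, d\mu(g).
\]
So the real content is this base case, and it should follow directly from the stationarity equation $\sigma = \int_{\Aff(\R)} \phi_\star \sigma \, d\lambda(\phi)$, unwound through the anti-isomorphism between $P$ and $\Aff(\R)^+$.

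Here is how I would carry out the base case. By definition $\eta_t = \int_\R a(t) u(s) \, d\sigma(s)$, thought of as a measure on $G$ (pushforward of $\sigma$ under $s \mapsto a(t)u(s)$). Now substitute the stationarity relation for $\sigma$: writing a $\sigma$-distributed point $s$ as $\phi(s')$ with $\phi \sim \lambda$ and $s' \sim \sigma$ independent, we get
\[
\eta_t = \int_{\Aff(\R)^+} \int_\R a(t) u(\phi(s')) \, d\sigma(s') \, d\lambda(\phi).
\]
The key algebraic step is to rewrite $u(\phi(s'))$ in terms of $u(s')$ and the group element $g \in P$ corresponding to $\phi$. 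Using $\phi(s') = \ttr_\phi s' + \ttb_\phi$ and the matrix identity $u(\ttr s + \ttb) = u(\ttb) a(\ttr) u(s) a(\ttr)^{-1}$ (a direct $2\times 2$ computation), together with $a(t) u(\ttb) a(\ttr) = a(t\ttr) \cdot \bigl(a(t\ttr)^{-1} a(t) u(\ttb) a(\ttr)\bigr)$ — and here one checks that $a(t\ttr)^{-1} a(t) u(\ttb) a(\ttr) = a(\ttr)^{-1/2}\cdots$ equals precisely $g^{-1}$ under the identification $g = a(\ttr_g)^{-1} u(\ttb_g)$ — one arrives at
\[
a(t) u(\phi(s')) = a(t\ttr_\phi) u(s') \cdot g,
\]
where $g \in P$ corresponds to $\phi$ (the anti-isomorphism is what makes the product land on the right). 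Integrating over $s'$ gives $\int_\R a(t\ttr_\phi) u(s') g \, d\sigma(s') = \eta_{t\ttr_\phi} * \delta_g$, and integrating over $\phi$ (equivalently over $g \sim \mu$, since $\mu$ is the pushforward of $\lambda$) yields the desired identity.

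For the inductive step, assuming the formula for $n$, I would write $\mu^{*(n+1)} = \mu^{*n} * \mu$ and compute: for $g' \sim \mu^{*n}$ and $h \sim \mu$ independent, the element $g'h$ has $\ttr_{g'h} = \ttr_{g'} \ttr_h$ (the diagonal part is multiplicative), so applying the $n=1$ identity to $\eta_{t\ttr_{g'}}$ and then the inductive hypothesis gives
\[
\eta_t = \int_P \eta_{t\ttr_{g'}} * \delta_{g'} \, d\mu^{*n}(g') = \int_P \int_P \eta_{t\ttr_{g'}\ttr_h} * \delta_h * \delta_{g'} \, d\mu(h) \, d\mu^{*n}(g'),
\]
and since $\delta_h * \delta_{g'} = \delta_{hg'}$ while the pushforward of $\mu^{*n} \otimes \mu$ under $(g',h) \mapsto hg'$ is $\mu^{*(n+1)}$ (beware the order — this is where the anti-isomorphism convention must be tracked carefully), this is exactly $\int_P \eta_{t\ttr_g} * \delta_g \, d\mu^{*(n+1)}(g)$.

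The main obstacle — really the only place to be careful — is bookkeeping the side on which group elements act and the anti-isomorphism $P \cong \Aff(\R)^+$: one must verify that the affine composition $\phi_1 \circ \cdots \circ \phi_n$ corresponds to the reversed product in $P$, so that the Dirac masses accumulate on the correct side and the convolution $\mu^{*n}$ (as opposed to its reverse) appears. Since the statement is symmetric enough and the diagonal cocycle $\ttr$ is genuinely multiplicative and commutative in the relevant scalars, the identity is robust, but writing the $2\times 2$ matrix manipulation $a(t)u(\phi(s)) = a(t\ttr_\phi)u(s)g$ explicitly once, with all factors in place, is the step that must be done with care rather than by hand-waving.
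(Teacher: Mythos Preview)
Your approach is correct and rests on the same algebraic identity as the paper, namely
\[
a(t\ttr_g)\,u(s)\,g = a(t)\,u(\ttr_g s + \ttb_g),
\]
together with the stationarity of $\sigma$. The paper's proof is more direct in two respects. First, it verifies the identity above in one line by expanding $a(t\ttr_g)u(s)a(\ttr_g)^{-1}u(\ttb_g)$; your intermediate route via $u(\ttr s+\ttb)=u(\ttb)a(\ttr)u(s)a(\ttr)^{-1}$ is valid, but the step where you claim that $a(t\ttr)^{-1}a(t)u(\ttb)a(\ttr)$ ``equals precisely $g^{-1}$'' is not right --- that expression simplifies to $u(\ttr^{-1}\ttb)$, not $u(-\ttb)a(\ttr)$. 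The target identity is nonetheless true, and you correctly flag this computation as the place requiring care. Second, the paper dispenses with induction entirely: since $\sigma$ is $\lambda^{*n}$-stationary for every $n$, one applies the identity once with $g\sim\mu^{*n}$ and is done. Your induction works but is unnecessary; the anti-isomorphism bookkeeping you worry about is automatically handled by the fact that $\mu^{*n}$ corresponds to $\lambda^{*n}$.
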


\begin{proof}
Observe that for any $s \in \R$ and $g \in P$,
\[
a(t\ttr_g) u(s) g = a(t\ttr_g) u(s) a(\ttr_g)^{-1} u(\ttb_g) = a(t)u( \ttr_g s + \ttb_g).
\]
The claim then follows from the $\lambda^{*n}$-stationarity and the fact that $\mu^{*n}$ and $\lambda^{*n}$ are related by the anti-isomorphism between $P$ and $\Aff(\R)^+$.
\end{proof}

We now discretize the set of values of $\ttr_{g}$ that appears in the part $\eta_{t \ttr_{g}}$. Given $r_{0}, r_{1}>0$ observe that 
\[\eta_{t r_{0}}=\delta_{a(r_{0}r_{1}^{-1})}*\eta_{t r_{1}}\]
Hence, for any finite Borel measure $\nu$ on $X$, we get 
\begin{equation} \label{etat-01}
|\eta_{t r_{0}}*\nu(f)-\eta_{t r_{1}}*\nu(f)|\ll |\log(r_{0}r_{1}^{-1})| \,\nu(X) \cS_{\infty, 1}(f).
\end{equation}
Let $\rho>0$,  consider a parameter $\alpha\in(0, 1)$ to be specified later depending on $\Lambda, \mu$, and set $\sR:=\{(1+\rho^\alpha)^k\,:\, k\in \Z\}$. Combining \eqref{etat-01} with \Cref{lm:cocycle}, we get for any $x\in X$, $f\in B^\infty_{\infty,1}(X)$,  that
\begin{equation}\label{eq-dec1}
\abs{\eta_t* \delta_x (f)} \leq \sum_{r\in \sR} \abs{\eta_{t r} * \mu^n_{r} * \delta_x (f)} +O(\rho^\alpha \cS_{\infty, 1}(f))
\end{equation}
where $\mu^n_{r}$ denotes the restriction of $\mu^{*n}$ to the set $\set{g \in P : \ttr_g \in [r, r(1+\rho^\alpha)[}$.

Let $\kappa=\kappa(\Lambda, \mu)>0$ as in \Cref{endgame}. Assume $\inj(x) \geq \rho$.
By \Cref{high-dim},  there are constants $C=C(\Lambda, \mu) > 1$ and $\eps_1=\eps_{1}(\Lambda, \mu) > 0$ such that, provided $\rho\lll 1$, the measure $\mu^{*n} * \delta_x$ on $X$ is $(1 - \kappa, \cB_\rho, \rho^{\eps_1})$-robust for any $n \geq C \abs{\log \rho}$.
For the rest of this proof, we specify $n,t$ in terms of $\rho$ as 
\begin{equation}
\label{eq:t in rho}
n=\lceil C \abs{\log \rho}\rceil, \,\,\,\,\,\,\,\,\,\,\,\,\,\,\,\,\,\,\,\,t = \rho^{-C \Lyap - 3/8}.
\end{equation}
Consider 
\[
\sR' = \set{r \in \sR : \rho^{-1/4} \leq t r \leq \rho^{-1/2}} = \sR \cap [\rho^{C\Lyap +1/8}, \rho^{C\Lyap  - 1/8}].
\] 

On the one hand, for each $r \in \sR'$, note that $\mu^n_r * \delta_x \leq \mu^{*n} *\delta_x$ is still $(1 - \kappa, \cB_\rho, \rho^{\eps_1})$-robust.
Therefore the choice of $\sR'$ allows us to use \Cref{endgame} to obtain
\begin{equation}\label{eq-dec2}
\abs{\eta_{t r} * \mu^n_r * \delta_x (f)} \leq (\rho^\kappa + \rho^{\eps_1}) \cS_{\infty, 1}(f).
\end{equation}

On the other hand, by the large deviation estimates for sums of i.i.d real random variables, there is a constant $\eps_2 = \eps_2(\mu,C) > 0$ such that 
\[
\mu^{*n} \set{g : \abs{n \Lyap + \log \ttr_g} > \abs{\log \rho}/10} < \rho^{\eps_2}.
\]
whenever $\rho \lll_{C} 1$.
This implies an upper bound on the total mass $\sum_{r \in \sR \setminus \sR'} \mu^n_r(P) \leq \rho^{\eps_2}$ and hence
\begin{equation}\label{eq-dec3}
\sum_{r \in \sR \setminus \sR'} \abs{\eta_{t r} * \mu^n_r * \delta_x (f)} \leq \rho^{\eps_2} \cS_{\infty,1}(f).
\end{equation}

Putting \eqref{eq-dec1}, \eqref{eq-dec2}, \eqref{eq-dec3} together, we have  
\begin{align*}
\abs{\eta_t* \delta_x (f)} 
&\leq  \bigl( \# \sR' (\rho^\kappa + \rho^{\eps_1}) +\rho^{\eps_2}\bigr) \cS_{\infty,1}(f)\\
&\leq \bigl( \rho^{\kappa/2} + \rho^{\eps_{1}/2} +\rho^{\eps_2}\bigr) \cS_{\infty,1}(f)
\end{align*}
where the second bound uses $\sharp \sR' \ll \frac{2C\Lyap |\log \rho|}{\log(1+\rho^\alpha)}\sim |\log \rho| \rho^{-\alpha}$ and assumes $\alpha\leq \kappa\eps_{1}/4$, $\rho\lll 1$.

Viewing $\rho$ as varying with $t$ according to \eqref{eq:t in rho}, we can summarize the above as the following.
For $t > 1$ sufficiently large, for any $x \in X$ with $\inj(x) \geq t^{-(C \Lyap + 3/8)^{-1}}$, 
\[
\abs{\eta_t* \delta_x (f)} \leq  t^{-\eps_3} \cS_{\infty,1}(f)
\]
with $\eps_3 = \frac{\min\{ \kappa, \eps_1, \eps_2\}}{8 C \Lyap + 3}$.
Finally, if $x$ is a point with  $\inj(x) < t^{-(C \Lyap + 3/8)^{-1}}$ then $\inj(x)^{-1} t^{-\eps_3} \geq 1$.
\end{proof}

Effective equidistribution for the $\mu$-walk on $X$ can also be handled similarly (and more simply). 

\begin{proof}[Proof of \Cref{ThC'}]
\Cref{spectralgap-mu} and \Cref{endgame} are still valid with $(t,\eta_{t})$ replaced by $(e^n,\mu^{*n})$ where $n$ is an integer parameter  (essentially same proof, using \Cref{posdim-sigma} instead of \Cref{sigma Holder}).  Combining  \Cref{endgame}  with \Cref{high-dim}, we get the theorem.  More precisely, given $\rho\lll 1$, \Cref{high-dim} tells us that for $m\ggg |\log \rho| +|\log \inj(x)|$, the measure $\nu:=\mu^{*m}*\delta_{x}$
satisfies the conditions required to apply \Cref{endgame}. Then choosing $n>m$ such that $n-m  \in [\frac{1}{4}|\log \rho|, \frac{1}{2}|\log \rho|]$, we obtain that $\mu^{*n}*\delta_{x}$ is $\rho^{\eps}$-equidistributed for some small constant $\eps=\eps(\Lambda, \mu)>0$. This finishes the proof.
\end{proof}

\section{Double equidistribution}  \label{Sec-double-eq}


In this section, we show effective double equidistribution properties for expanding fractals.
This result refines \Cref{ThB'} and will play a  role  in the proof of the divergent case of \Cref{ThA'}. We use the notations set in \Cref{Sec-preliminary}.
In particular, $X=\SL_{2}(\R)/\Lambda$ where $\Lambda$ is an arbitrary lattice,  $x_{0}=\Lambda/\Lambda$ is the basepoint of  $X$, and $\sigma$ is a probability measure on $\R$ that is stationary for a randomized orientation preserving IFS $\lambda$ with a finite exponential moment.

\bigskip
Given a probability measure $\xi$ on $\R$, bounded continuous functions $f_{1},f_{2}:X\rightarrow \R$, and times $t_{2}\geq t_{1}>0$, we introduce  
\begin{equation} \label{double-eq-function}
\Delta^{\xi}_{f_1,f_2}(t_1,t_2) := \abse{\int_\R f_{1}\bigl(a(t_1)u(s)x_{0}\bigr) f_{2} \bigl(a(t_2)u(s)x_{0}\bigr) \dd \xi(s) - m_{X}(f_{1})m_{X}(f_{2})}.
\end{equation}
Hence the probability measure $u(s)x_{0}\dd\xi(s)$ on $X$ enjoys double equidistribution toward $m_{X}$ under expansion by the diagonal flow if for any such $f_{1},f_{2}$, we have 
$$\text{$\Delta^{\xi}_{f_1,f_2}(t_1,t_2)\to 0\,$ as $\,\inf (t_{2}t_{1}^{-1}, t_{1}) \to+\infty$.}$$
In this section, we show that in the case where $\xi=\sigma$, double equidistribution holds with an effective rate.


\begin{proposition}[Effective double equidistribution of expanding fractals] \label{decorrelation-sigma}
For every $\eta>0$, there exist $C, c>0$ such that for all  $ t_{1},t_{2}>1$ with $t_2\geq t^{1+\eta}_{1}$ and $f_{1}, f_{2}\in B^\infty_{\infty,1}(X)$, we have
\begin{equation} \label{double-eq-prop-sigma}
\Delta^{\sigma}_{f_1,f_2}(t_1,t_2) 
\leq C  \cS_{\infty, 1}(f_1) \abs{m_{X}(f_2)}  t_1^{-c} +  C\cS_{\infty, 1}(f_{1})\cS_{\infty, 1}(f_{2}) t_2^{-c}.
\end{equation}
\end{proposition}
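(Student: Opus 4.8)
The plan is to reduce double equidistribution at times $(t_1,t_2)$ to single equidistribution at the larger time $t_2$, after conditioning on the position of the walk at time $t_1$. Write $t_2 = t_1 t$ where $t \geq t_1^{\eta}$. The key observation is the cocycle identity $a(t_2)u(s) = a(t)\,a(t_1)u(s)$, so that for fixed $s$,
\[
f_1\bigl(a(t_1)u(s)x_0\bigr)\, f_2\bigl(a(t_2)u(s)x_0\bigr) = F\bigl(a(t_1)u(s)x_0\bigr),
\]
where $F(y) := f_1(y)\,P_{a(t)}f_2(y)$ and $P_{a(t)}f_2(y) = f_2(a(t)y)$. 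Thus
\[
\int_\R f_1\bigl(a(t_1)u(s)x_0\bigr) f_2\bigl(a(t_2)u(s)x_0\bigr)\dd\sigma(s) = \eta_{t_1}*\delta_{x_0}(F).
\]
The trouble is that $F$ is not a good test function: $P_{a(t)}f_2$ has derivatives of size $\sim t$ along the contracting direction, so $\cS_{\infty,1}(F) \sim t \cS_{\infty,1}(f_1)\cS_{\infty,1}(f_2)$, which is far too large for a direct application of \Cref{ThB'} (or its strengthening at a single time).

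To get around this I would \emph{mollify}: split $f_2 = m_X(f_2) + f_2^0$ with $m_X(f_2^0)=0$, and for the mean-zero part approximate $P_{a(t)}f_2^0$ by its average over a small ball in $X$ of radius $r$ to be optimized. The averaged function $\widetilde{P_{a(t)}f_2^0}$ is $1$-Lipschitz up to a factor $r^{-1}$, hence $\cS_{\infty,1}(f_1 \cdot \widetilde{P_{a(t)}f_2^0}) \ll r^{-1}\cS_{\infty,1}(f_1)\cS_{\infty,1}(f_2)$, while the mollification error is controlled by the modulus of continuity of $P_{a(t)}f_2^0$ on a ball of radius $r$ — which is $\ll r \cdot t \cdot \cS_{\infty,1}(f_2)$ in the worst (contracting) direction but only $\ll r\cS_{\infty,1}(f_2)$ in the neutral and expanding directions. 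Here one must use that $\sigma$ has positive dimension (\Cref{sigma Holder}(ii)): the pushforward $a(t_1)u(s)x_0\dd\sigma(s)$ has its mass spread out along the \emph{expanding} horocycle direction, so the bad contracting-direction contribution is suppressed. Concretely, after applying \Cref{ThB'} to $\eta_{t_1}$ with the mollified function one gets a main term $m_X(f_1)m_X\bigl(\widetilde{P_{a(t)}f_2^0}\bigr) = O(r\cS_{\infty,1}(f_2))$ from the fact that $m_X(P_{a(t)}f_2^0)=0$ changes by at most the mollification error, plus an equidistribution error $\ll \inj(x_0)^{-1} r^{-1}\cS_{\infty,1}(f_1)\cS_{\infty,1}(f_2)\, t_1^{-c_0}$. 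Together with the term $m_X(f_2)\bigl(\eta_{t_1}*\delta_{x_0}(f_1) - m_X(f_1)\bigr) = O\bigl(\cS_{\infty,1}(f_1)|m_X(f_2)|\,t_1^{-c_0}\bigr)$ coming from the constant part of $f_2$, this yields \eqref{double-eq-prop-sigma} after choosing $r$ a suitable small power of $t_1$ and using $t \geq t_1^\eta$ to absorb the surviving powers of $t$; the second error term $\cS_{\infty,1}(f_1)\cS_{\infty,1}(f_2)t_2^{-c}$ in the statement will come from handling the regime where the naive bound $|\eta_{t_2}|\leq \cS$ on the $f_2^0$ part is simply used directly, i.e. from not needing equidistribution to kick in when $t_2$ is already enormous.

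The main obstacle is making the mollification argument quantitative while correctly exploiting the anisotropy: one needs $\sigma$-mass non-concentration to show that the direction in which $P_{a(t)}f_2$ is badly behaved (the contracting direction of $a(t)$) is transverse to the horocycle $u(s)x_0$ carrying $\sigma$, so that the $O(rt)$ error only occurs on a set of small $\sigma$-measure. I would formalize this by writing the mollification in coordinates adapted to the stable/unstable/neutral decomposition of $a(t)$, noting that for $y = a(t_1)u(s)x_0$ moving the point by $\delta$ in the neutral or unstable direction changes $P_{a(t)}f_2(y)$ by $O(\delta\cS_{\infty,1}(f_2))$, whereas the stable direction is where the factor $t$ appears — but a stable-direction displacement corresponds, under $a(t_1)$, to a much smaller displacement along $u(\cdot)x_0$, gaining a factor $t_1^{-1}$; choosing $r \ll t_1^{-1}$ times a small constant kills it. A cleaner alternative, which I would try first, is to invoke the single-time equidistribution at $t_1$ in the strengthened form (with explicit dependence on higher Sobolev norms if available from \Cref{Kintchine-dynamics}'s proof) applied directly to the smooth-in-the-unstable-direction function $f_1 \cdot P_{a(t)}f_2^0$, absorbing the factor $t$ into the error by taking $t_1$ a large power — this works precisely because $t \leq t_2$ and $c$ can be taken small. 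Either way the computation is routine once the splitting is in place; the conceptual content is entirely in the reduction and the mollification bookkeeping.
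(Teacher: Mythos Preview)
Your proposal has a genuine gap: both the mollification argument and the ``cleaner alternative'' only work in the short-range regime $t_2 \leq t_1^{1+c_0}$ (with $c_0$ the exponent from \Cref{ThB'}), not for arbitrary $\eta>0$ as the proposition requires. Concretely, after mollifying $P_{a(t)}f_2^0$ at scale $r$ you incur a pointwise error $\ll rt\,\cS_{\infty,1}(f_2)$ and an equidistribution error $\ll r^{-1}\cS_{\infty,1}(f_1)\cS_{\infty,1}(f_2)\,t_1^{-c_0}$; optimizing in $r$ gives an error $\asymp (t/t_1^{c_0})^{1/2}$, which blows up once $t=t_2/t_1$ exceeds $t_1^{c_0}$. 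The anisotropy rescue you propose rests on a direction mix-up: since $\Ad(a(t))e_+ = t\,e_+$, the derivative of $y\mapsto f_2(a(t)y)$ is large precisely in the \emph{unstable} direction $e_+$, not the stable one. But the expanding fractal $a(t_1)u(s)x_0\dd\sigma(s)$ lies exactly along the $e_+$-horocycle (the tangent to $s\mapsto a(t_1)u(s)x_0$ is $t_1 e_+$), so the bad direction for $F$ is the direction along which the measure varies --- there is no transversality and no suppression. Your ``cleaner alternative'' is essentially the short-range argument of \Cref{lm:short-range-decorr}, which indeed gives \eqref{eq:short-range} but only when $t_2\leq t_1^{1+c/2}$.

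The missing idea is one your proposal never invokes: the self-similarity of $\sigma$. Writing $\sigma=\lambda^{*n}*\sigma$ and unfolding gives
\[
I=\int_{\Aff(\R)^+}\!\int_{\R} f_1\bigl(a(t_1)u(\phi(s))x_0\bigr)\,f_2\bigl(a(t_2)u(\phi(s))x_0\bigr)\dd\sigma(s)\dd\lambda^{*n}(\phi).
\]
For typical $\phi$ with $\ttr_\phi\approx e^{-n\Lyap}$, the map $s\mapsto a(t_1)u(\phi(s))x_0$ varies on scale $t_1 e^{-n\Lyap}$, so choosing $n$ with $e^{-n\Lyap}$ strictly between $t_2^{-1}$ and $t_1^{-1}$ (possible exactly because $t_2\geq t_1^{1+\eta}$) makes $f_1(a(t_1)u(\phi(s))x_0)\approx f_1(a(t_1)u(\ttb_\phi)x_0)$ constant in $s$, while the inner $f_2$-integral is an $\eta_{t_2\ttr_\phi}$-average based at $h_\phi x_0$ with $t_2\ttr_\phi\gg 1$, to which \Cref{ThB'} applies (controlling $\inj(h_\phi x_0)$ via \Cref{effective-recurrence}). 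This decoupling via self-similarity at an intermediate scale is what bridges the long-range regime; mollification alone cannot.
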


Taking $f_{2}=1$ and letting $t_{2}\to+\infty$, we see that \Cref{decorrelation-sigma}  implies   \Cref{ThB'}. 
The proposition assumes  that the times $t_{1}$, $t_{2}$ are slightly separated, via the condition  $ t_2\geq t^{1+\eta}_{1}>1$. In fact we will see later in \Cref{cor-double-+} that \eqref{double-eq-prop-sigma} also implies an upper bound in the \emph{short-range} regime  $t^{1+\eta}_{1}\geq t_2\geq t_{1}$. Namely, for all $t_{2}\geq t_{1}>1$, we have
\begin{multline}
\label{eq:all-range-preview}
\Delta^{\sigma}_{f_1,f_2}(t_1,t_2) 
\leq C \cS_{2,1}(f_1) \cS_{2,1}(f_2) t_1^{c} t_2^{-c} \\ + C \cS_{\infty, 1}(f_1) \abs{m_{X}(f_2)}  t_1^{-c} + C \cS_{\infty, 1}(f_{1})\cS_{\infty, 1}(f_{2}) t_2^{-c},
\end{multline}
for possibly different constants $C,c>0$, depending only on $\Lambda$, $\sigma$. 

The proof of \Cref{decorrelation-sigma} is inspired by  \cite[Theorem 1.2]{KSW17}, which deals with absolutely continuous measures, and \cite[Proposition 10.1]{KL23} which deals with fractal measures and either short-range or long-range regime (i.e. $t_{2}\in [t_{1}, t_{1}^{1+\eps}]$ or $t_{2}\geq t^C_{1}$ where $C^{-1}, \eps\lll 1$). 
Here is the main idea behind the proof.
By self-similarity of $\sigma$,  the distribution $(a(t_{1})u(s)x_{0}, a(t_{2})u(s)x_{0})\dd\sigma(s)$ is roughly that of $(hx_{0}, ghx_{0})\dd\mu^{*n_{1}}(h)\dd\mu^{*n_{2}}(g)$ where $n_{1}\simeq \Lyap^{-1}\log t_{1}$ and $n_{2}\simeq \Lyap^{-1}\log(t_{2}/t_{1})$, with $\Lyap$ the Lyapunov exponent of $\Ad_{\star}\mu$, see \eqref{def-Lyap}. Then we apply  \Cref{ThB'} to the $\mu$-random walk starting at $hx_{0}$, to get that the variable in the second coordinate equidistributes conditionally to the first one. \Cref{ThB'} tells us the first coordinate equidistributes as well, whence the result. 

\begin{proof} 
To lighten notations, we write $\mathcal{S}=\mathcal{S}_{\infty,1}$ and $\cS(f_{1},f_{2})=\cS(f_{1})\cS(f_{2})$.
Noting the relation
\[
\Delta^{\sigma}_{f_1,f_2}(t_1,t_2) \leq \Delta^{\sigma}_{f_1,f_2-m_X(f_2)}(t_1,t_2) +
  \abse{\int_\R f_{1}\bigl(a(t_1)u(s)x_{0}\bigr) \dd \sigma (s) - m_X(f_1)}\abs{m_X(f_2)}
\]
and that \Cref{ThB'} provides us with a constant $c = c(\Lambda, \sigma) > 0$ such that 
\[\abse{\int_\R f_{1}\bigl(a(t_1)u(s)x_{0}\bigr) \dd \sigma (s) - m_X(f_1)} \ll \cS(f_1) t_1^{-c},\]
we can reduce to the case where $m_X(f_2) = 0$.

Thus, we are left to bound the integral
\[I := \int_\R f_{1}\bigl(a(t_1)u(s)x_{0}\bigr) f_{2} \bigl(a(t_2)u(s)x_{0}\bigr) \dd \sigma(s)\]
by a quantity of the form $O_{\eta}(\cS(f_1,f_2)t_2^{-\kappa})$ where $\kappa=\kappa(\Lambda, \sigma, \eta)>0$.

We first  use the $\lambda$-stationarity of $\sigma$ to allow the argument in $f_{2}$ to vary randomly conditionally to that in $f_{1}$. Let $M, n\geq 1$ be  (large) parameters to be specified later. 
By the large deviation principle for sums of i.i.d. real random variables, there exists $\eps=\eps(\lambda, M)>0$ such that, provided $n \ggg_{M} 1$, 
\begin{equation}\label{eq10-2}
\lambda^{*n}(\sC) \geq 1-e^{-n\eps}, \text{ where } \sC := \setbig{\phi \in \Aff(\R)^+ : \abs{n\Lyap+\log \ttr_{\phi}} \leq \frac{n\Lyap}{M}}.
\end{equation}
Note that for any $\phi\in \sC$, $t>1$, $s\in \R$, we have $|\phi(s)-\ttb_{\phi}|\leq \ttr_{\phi}|s|\leq e^{-(1-\frac{1}{M})n\Lyap}|s|$, whence
\begin{equation}\label{eq10-phi(s)-bphi}
|f_1\bigl(a(t) u(\phi(s))x_0\bigr)-f_1\bigl(a(t) u(\ttb_{\phi})x_0\bigr)| \ll t e^{-(1-\frac{1}{M})n\Lyap}|s|\mathcal{S}(f_1).
\end{equation}
Moreover, as $s$ varies with law $\sigma$, its size is controlled by the moment estimate of \Cref{sigma Holder}. Namely, there is some $\gamma=\gamma(\sigma)>0$ such that for all $R>1$,
\begin{equation}\label{eq10-1}
\sigma \set{s\in \R : |s| >R}\ll R^{-\gamma}.
\end{equation}
Splitting the integral on $s$ according to whether $\abs{s} \leq e^{\frac{n\Lyap}{M}}$ or not and using \eqref{eq10-phi(s)-bphi} and \eqref{eq10-1}, we obtain
\begin{equation}\label{eq10-3}
\int_{\R} \bigl\lvert f_1\bigl(a(t) u(\phi(s))x_0\bigr) - f_1\bigl(a(t) u(\ttb_{\phi})x_0\bigr) \bigr\rvert \dd\sigma(s)\ll (e^{-\frac{n\Lyap \gamma}{M}}+te^{-(1-\frac{2}{M})n\Lyap})\mathcal{S}(f_1).
\end{equation}
Using the $\lambda$-stationarity of $\sigma$, then applying  \eqref{eq10-2} followed by \eqref{eq10-3}, we deduce
\begin{align*}
I &= \int_{\Aff(\R)^+}\int_{\R} f_{1}\bigl(a(t_1)u(s)x_{0}\bigr) f_{2} \bigl(a(t_2)u(s)x_{0}\bigr) \dd (\phi_{\star}\sigma)(s) \dd\lambda^{*n}(\phi)\\
&=  \int_{\sC} f_{1}\bigl(a(t_1)u(\ttb_{\phi})x_{0}\bigr)  \int_{\R} f_{2} \bigl(a(t_2)u(s)x_{0}\bigr) \dd (\phi_{\star}\sigma)(s) \dd\lambda^{*n}(\phi) +E_{1}
\end{align*}
where $E_{1}$ stands for the error term $E_{1}=O_{M}(e^{-\frac{n\Lyap \gamma}{M}}+t_{1}e^{-(1-\frac{2}{M})n\Lyap}+e^{-n\eps})\cS(f_{1}, f_{2})$.

It follows that 
\begin{equation}
\label{I-f1-const}
\abs{I} \leq \cS(f_1) \int_{\sC} \abse{\int_{\R} f_{2} \bigl(a(t_2)u(s)x_{0}\bigr) \dd (\phi_{\star}\sigma)(s)} \dd\lambda^{*n}(\phi) + |E_{1}|
\end{equation}

The inner integral invovling $f_{2}$ can be bounded using \Cref{ThB'}.
Indeed, recall that $m_X(f_2) = 0$ and that for any $t>0$ and any $s\in \R$, 
\[a(t) u(\phi(s))=a(t\ttr_{\phi})u(s)h_{\phi}\]
where $h_{\phi}=a(\ttr_{\phi}^{-1})u(\ttb_{\phi})$.
Hence, for any $\phi \in \sC$,
\begin{align}\label{eq10-4}
\int_{\R} f_{2} \bigl(a(t_2)u(s)x_{0}\bigr) \dd (\phi_{\star}\sigma)(s) &=\int f_2\bigl(a(t_{2}\ttr_{\phi})u(s)h_{\phi}x_0\bigr) \dd\sigma(s) \nonumber\\
&= O\bigl(\inj(h_{\phi}x_0)^{-1}\mathcal{S}(f_2) t_{2}^{-c} e^{c(1+1/M) n\Lyap}\bigr).
\end{align}
where $c=c(\Lambda, \sigma)>0$ is the exponent provided by \Cref{ThB'}.
Note that $h_{\phi}$ has law $\mu^{*n}$ when $\phi$ varies randomly according to $\lambda^{*n}$.
Hence, by the effective recurrence of the $\mu$-random walk on $X$ (\Cref{effective-recurrence}), there exists $\delta=\delta(\Lambda, \lambda)>0$ such that 
\begin{align}\label{eq10-5}
\lambda^{*n}\setbig{\phi \in \Aff(\R)^+ : \inj(h_{\phi}x_0)\leq e^{-\frac{c}{M}n\Lyap}} \ll e^{- \delta \frac{c}{M} n\Lyap}. 
\end{align}
Note that for $\phi\in \sC$ not belonging to the set in \eqref{eq10-5}, the error term in \eqref{eq10-4}   is bounded by $O(\mathcal{S}(f_2) t_{2}^{-c} e^{c(1+2/M)n\Lyap} )$.
Therefore, we see from \eqref{I-f1-const}, \eqref{eq10-4}, \eqref{eq10-5} that
\begin{equation*}
\abs{I} \ll (t_{2}^{-c} e^{c(1+2/M)n\Lyap} + e^{- \delta \frac{c}{M} n\Lyap})\cS(f_{1}, f_{2})+|E_{1}|.
\end{equation*}

Recalling the value of $E_1$ and choosing $n$ such that $n\Lyap= \frac{1}{2}\log t_1 + \frac{1}{2}\log t_2 + O(1)$, we obtain
\[
\abs{I} \ll_{M} \cS(f_{1}, f_{2}) \Bigl((t_{2}/t_{1})^{-c/2}(t_{1}t_{2})^{c/M} + (t_{1}t_{2})^{-c'}+ (t_{2}/t_{1})^{-1/2}(t_{1}t_{2})^{1/M} \Bigr)
\]
where $c'>0$ only depends on  $\Lambda$, $\lambda$, $\sigma$, $M$. 
The desired estimate $\abs{I} \ll t_2^{-\kappa}$ follows, provided $M$ has been chosen large enough from the start depending on the separation parameter $\eta$.
\end{proof}

\section{The dichotomy} \label{Sec-Khintchine-dich}

We show that an arbitrary probability measure $\xi$ on $\R$ obeys the Khintchine dichotomy provided that the pushfoward $a(t)u(s)\SL_{2}(\Z)\dd\xi(s)$ exhibits certain effective equidistribution properties on $\SL_{2}(\R)/\SL_{2}(\Z)$ for large $t$. We deduce \Cref{ThA'}  (whence \Cref{Kintchine-Cantor}). 
We use the notations introduced in \Cref{Sec-preliminary}.

\begin{definition} Let $\xi$ be a probability measure on $\R$. 
We say that $\xi$ satisfies the \emph{effective single equidistribution property on $X$} if there are constants $C,c>0$ such that
\begin{multline}\label{single-eq-prop}
\forall f \in B^\infty_{\infty,1}(X),\, \forall t > 1,\\
\abse{\int_\R f\bigl(a(t) u(s) x_0\bigr) \dd \xi(s) - m_{X}(f) } \leq C \cS_{\infty, 1}(f) t^{-c}.
\end{multline}

We say that $\xi$ satisfies the \emph{effective double equidistribution property on $X$} if for any $\eta > 0$, there are constants $C,c>0$ such that
\begin{multline}\label{double-eq-prop}
\forall f_{1},f_{2} \in B^\infty_{\infty,1}(X),\,  \forall t_{1} > 1,\, \forall t_{2}> t_{1}^{1+\eta},\\
\Delta^{\xi}_{f_1,f_2}(t_1,t_2) 
\leq  C\cS_{\infty, 1}(f_1) \abs{m_{X}(f_2)}  t_1^{-c} +  C\cS_{\infty, 1}(f_{1})\cS_{\infty, 1}(f_{2}) t_2^{-c}.
\end{multline}
where the notation $\Delta^{\xi}_{f_1,f_2}(t_1,t_2) $ is defined in \eqref{double-eq-function}. See \Cref{cor-double-+} for an alternative characterization.
\end{definition}

In \cite{KL23}, Khalil-Luethi showed that effective single equidistribution  implies the convergent case of the Khintchine dichotomy.  

\begin{thm}[Convergent case  {\cite[Theorem 9.1]{KL23}}] \label{convergent-Khintchine}
Let $\xi$ be a probability measure on $\R$ satisfying the effective single equidistribution property \eqref{single-eq-prop} on $\SL_{2}(\R)/\SL_{2}(\Z)$. Then for every non-increasing  function $\psi: \mathbb{N}\to \R_{> 0}$  such that $\sum_q \psi(q)<\infty $, we have  
$$\xi (W(\psi))=0. $$ 
\end{thm}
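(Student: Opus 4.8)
The statement to prove is Theorem~\ref{convergent-Khintchine}, the convergent case of the Khintchine dichotomy, which is attributed to Khalil--Luethi. Let me think about how I would prove it via Dani's correspondence.

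The idea: translate the Diophantine condition into a dynamical statement on $X = \SL_2(\R)/\SL_2(\Z)$, which is the space of unimodular lattices in $\R^2$. A point $s \in \R$ gives a lattice $u(s)\Z^2$ (after applying $u(s)$), and applying $a(t)$ expands/contracts. The key: $s$ is $\psi$-approximable iff for infinitely many $q$, the lattice $a(t)u(s)\Z^2$ has a short vector, for appropriate $t$ depending on $q$. Precisely, if $|qs - p| < \psi(q)$, then $u(s)\binom{-p}{q} = \binom{qs-p}{q}$; applying $a(t)$ with $t = t_q$ chosen so that the two coordinates are balanced gives a vector of small norm. The standard choice: pick $t$ such that $t^{1/2}\psi(q) \approx t^{-1/2} q$, i.e. $t \approx q/\psi(q)$; then the vector has norm $\approx \sqrt{q\psi(q)}$.

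So the plan. First, set up Dani's correspondence: define a sequence of times $t_k$ (e.g. dyadic, $e^k$ or similar) and shrinking targets $\epsilon_k \to 0$, and show that $s \in W(\psi)$ implies $a(t_k)u(s)x_0 \in \{\text{systole} < \epsilon_k\}$ for infinitely many $k$ — more precisely, one partitions the range of $q$ into blocks $[e^k, e^{k+1})$ and within block $k$ the relevant expansion time is comparable to $e^k/\psi(e^k)$ (using monotonicity of $\psi$), with target radius $\rho_k \asymp \sqrt{\psi(e^k) \cdot e^k / (\text{something})}$... I need to be careful, but the upshot is: $W(\psi) \subseteq \limsup_k A_k$ where $A_k = \{s : a(t_k)u(s)x_0 \in C_{\rho_k}\}$, $C_\rho = \{x \in X : \inj(x) < \rho\}$ (or systole $< \rho$), and $\sum_k \rho_k^2 \ll \sum_q \psi(q) < \infty$ — this uses the convergence of the series together with monotonicity (Cauchy condensation-type estimate). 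Second, estimate $\xi(A_k)$: choose a smooth bump function $f_k$ with $\mathbbm{1}_{C_{\rho_k}} \leq f_k \leq \mathbbm{1}_{C_{2\rho_k}}$, so that $m_X(f_k) \ll \rho_k^2$ (volume of the cusp neighborhood) and $\cS_{\infty,1}(f_k) \ll \rho_k^{-N}$ for some fixed $N$ (the bump has to vary over scale $\rho_k$, losing powers of $\rho_k$). Then by the effective single equidistribution property \eqref{single-eq-prop}, $\xi(A_k) \leq \int f_k(a(t_k)u(s)x_0)\dd\xi(s) \leq m_X(f_k) + C\cS_{\infty,1}(f_k) t_k^{-c} \ll \rho_k^2 + \rho_k^{-N} t_k^{-c}$. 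Third: choose the sampling of times $t_k$ dense enough (e.g. replace $e^k$ by $(1+\delta)^k$, or take $t_k$ growing geometrically but sampled finely) so that $t_k^{-c}$ beats $\rho_k^{-N}$ — since $\rho_k$ decays at most polynomially in $t_k$ while we have freedom to make $t_k$ as large as we like relative to the block, the error term $\rho_k^{-N}t_k^{-c}$ is summable. Then $\sum_k \xi(A_k) < \infty$, so by Borel--Cantelli $\xi(\limsup A_k) = 0$, hence $\xi(W(\psi)) = 0$.

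The main obstacle — and the reason this is a genuine theorem rather than a triviality — is the careful bookkeeping in the Dani correspondence: one must (a) correctly translate $|qs-p| < \psi(q)$ into a short-vector condition with the right target radius as a function of $q$, handling the monotonicity of $\psi$ to pass from "infinitely many $q$" to "infinitely many blocks/times $t_k$", (b) ensure that summing $\rho_k^2$ over the chosen times $t_k$ is controlled by $\sum_q \psi(q)$ (this is where non-increasingness of $\psi$ is essential — a Cauchy-condensation argument), and (c) balance the three competing quantities $\rho_k^2$ (main term), $\rho_k^{-N}t_k^{-c}$ (equidistribution error), and the number of sampled times, so that the total is summable. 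The trick that makes (c) work is that we have complete freedom to choose how finely to sample the expansion times, so we can always take $t_k$ large enough within each block that the polynomial-in-$\rho_k$ loss from the Sobolev norm is absorbed. Since this theorem is quoted from \cite[Theorem 9.1]{KL23}, I would in fact simply cite it; but the above is the proof strategy one would reconstruct, following Kleinbock--Margulis~\cite{KM99}.

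\begin{proof}[Proof sketch of Theorem~\ref{convergent-Khintchine}]
This is \cite[Theorem 9.1]{KL23}; we only indicate the strategy, which follows the dynamical approach of Kleinbock--Margulis \cite{KM99}. Identify $X=\SL_2(\R)/\SL_2(\Z)$ with the space of unimodular lattices in $\R^2$ and write $\sys(x)$ for the length of a shortest nonzero vector of $x$, so that $\sys(x)^2\simeq \inj(x)$ near the cusp. The elementary computation
\[
a(t)u(s)\begin{pmatrix}-p\\q\end{pmatrix}=\begin{pmatrix}t^{1/2}(qs-p)\\ t^{-1/2}q\end{pmatrix}
\]
shows that if $|qs-p|<\psi(q)$ with $q\in[Q,2Q)$ and one sets $t\simeq Q/\psi(Q)$, then $a(t)u(s)x_0$ has a nonzero vector of norm $\ll \sqrt{Q\psi(Q)}=:\rho$. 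Sampling the expansion times along a geometric sequence $(t_k)$ (taken fine enough, to be fixed below) and letting $\rho_k$ be the corresponding target radius, one obtains
\[
W(\psi)\subseteq \limsup_{k\to\infty}A_k,\qquad A_k:=\set{s\in\R : \sys\bigl(a(t_k)u(s)x_0\bigr)<\rho_k},
\]
and, crucially using that $\psi$ is non-increasing (a Cauchy-condensation estimate), $\sum_k \rho_k^2 \ll \sum_{q\in\N}\psi(q)<\infty$.

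To estimate $\xi(A_k)$, fix for each $k$ a function $f_k\in B^\infty_{\infty,1}(X)$ with $\mathbbm 1_{\{\sys<\rho_k\}}\leq f_k\leq \mathbbm 1_{\{\sys<2\rho_k\}}$; since the cusp neighbourhood $\{\sys<2\rho_k\}$ has Haar measure $\ll \rho_k^2$ and $f_k$ varies at scale $\rho_k$, we may arrange
\[
m_X(f_k)\ll \rho_k^2,\qquad \cS_{\infty,1}(f_k)\ll \rho_k^{-N}
\]
for a fixed absolute $N$. Applying the effective single equidistribution property \eqref{single-eq-prop} to $f_k$ at time $t_k$ gives
\[
\xi(A_k)\leq \int_\R f_k\bigl(a(t_k)u(s)x_0\bigr)\dd\xi(s)\leq m_X(f_k)+C\,\cS_{\infty,1}(f_k)\,t_k^{-c}\ll \rho_k^2+\rho_k^{-N}t_k^{-c}.
\]
Because $\rho_k$ decays only polynomially in $t_k$ while the sampling of the $t_k$ can be chosen as fine as we wish, we may ensure $\rho_k^{-N}t_k^{-c}$ is summable; combined with $\sum_k\rho_k^2<\infty$ this yields $\sum_k\xi(A_k)<\infty$. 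The Borel--Cantelli lemma then gives $\xi\bigl(\limsup_k A_k\bigr)=0$, hence $\xi(W(\psi))=0$.
\end{proof}
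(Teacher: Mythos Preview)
The paper does not prove this statement at all; it is quoted verbatim from \cite[Theorem~9.1]{KL23} and used as a black box. Your proposal does the same and adds a sketch of the Kleinbock--Margulis argument, which is appropriate and essentially correct.

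There is, however, one genuine inaccuracy in your justification of why the error term $\rho_k^{-N}t_k^{-c}$ is summable. You claim that ``$\rho_k$ decays only polynomially in $t_k$'' and that refining the sampling of the $t_k$ absorbs the Sobolev loss. Neither is true. The relation between $\rho_k$ and $t_k$ is forced by $\psi$: with $\rho_k^2=Q_k\psi(Q_k)$ and $t_k=Q_k/\psi(Q_k)$, a rapidly decaying $\psi$ (e.g.\ $\psi(q)=e^{-q}$) makes $\rho_k^{-1}t_k^{-c}$ blow up whenever $c<1/2$. And refining the geometric sampling only multiplies both the main sum $\sum_k\rho_k^2$ and the error sum by the same factor; it cannot improve the ratio $\rho_k^{-N}/t_k^{c}$.

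The correct fix, and the one used in \cite{KL23} (and already in \cite{KM99}), is a preliminary reduction: since $W(\psi)\subseteq W(\tilde\psi)$ for $\tilde\psi(q):=\max(\psi(q),q^{-M})$ and $\sum_q\tilde\psi(q)<\infty$ for any $M>1$, one may assume $\psi(q)\geq q^{-M}$ from the outset. Then $t_k\geq Q_k$ while $\rho_k^{-1}\leq Q_k^{(M-1)/2}$, so $\rho_k^{-N}t_k^{-c}\leq Q_k^{N(M-1)/2-c}$; choosing $M\in(1,1+2c/N)$ makes this summable. With this reduction in place your sketch goes through.
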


We show that effective double equidistribution implies the divergent case of the Khintchine dichotomy. 
Moreover our method yields  quantitative estimates on the number of solutions of the Diophantine inequality when bounding the denominator.  
We set $\cP(\Z^2) := \set{(p,q) \in \Z^2 : \gcd(p,q) = 1}$  the set of {primitive} elements in $\Z^2$. We let $\zeta(t)=\sum_{n\geq 1} n^{-t}$ denote the Riemann zeta function.

\begin{thm}[Divergent case]  \label{quant-Khintchine-1/q}
Let $\xi$ be a  probability measure on $\R$ satisfying the effective double equidistribution property \eqref{double-eq-prop} on $\SL_{2}(\R)/\SL_{2}(\Z)$.
Let $\psi: \mathbb{N}\to \R_{> 0}$ be a non-increasing function satisfying $\sum_q \psi(q)=\infty $, as well as 
\begin{equation}
\label{eq:leqoverq}
\forall q \in \N,\quad \psi(q)\leq q^{-1}.
\end{equation} 

Then for $\xi$-almost every $s\in \R$,  as $N\to +\infty$, we have 
\begin{equation}
\label{eq:primsol}
    \#\set{(p,q)\in \mathcal{P}(\Z^2):  \,1\leq q\leq N, \,0\leq qs-p < \psi(q)} \,\sim_{\xi, \psi,s}\,   \zeta(2)^{-1} \sum_{q=1}^N \psi(q).
\end{equation}
The same  holds if we ask for $- \psi(q) < qs-p \leq 0$ instead.
\end{thm}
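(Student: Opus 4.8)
The plan is to run Schmidt's variance method for the quantitative Khintchine theorem, transported to $X=\SL_2(\R)/\SL_2(\Z)$ through Dani's correspondence. For $n\geq 1$ put $I_n=(e^{n-1},e^n]$ and
\[
A_n(s)=\#\set{(p,q)\in\mathcal P(\Z^2): q\in I_n,\ 0\le qs-p<\psi(q)},
\]
so that the quantity in \eqref{eq:primsol} equals $\sum_{1\le n\le \lfloor\log N\rfloor}A_n(s)$ up to a boundary term of size $O(A_{\lfloor\log N\rfloor+1}(s))$. I would realize $A_n$ via the primitive Siegel transform $\widehat{\varphi}(\Lambda)=\sum_{v\in\Lambda\text{ primitive}}\varphi(v)$: writing $R_n=\bigcup_{q\in I_n\cap\Z}[0,\psi(q))\times(q-\tfrac12,q+\tfrac12]\subset\R^2$ and $E_n=a(e^{2n})R_n$, one has $A_n(s)=\widehat{\1_{R_n}}(u(s)x_0)=\widehat{\1_{E_n}}(a(e^{2n})u(s)x_0)$. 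The hypothesis \eqref{eq:leqoverq} is what keeps everything under control: it makes $R_n$ a very thin slab (width $\le e^{-(n-1)}$), so that $E_n$ lies in a fixed compact region with $\mathrm{Leb}(E_n)=\sum_{q\in I_n}\psi(q)\le\sum_{q\in I_n}q^{-1}=O(1)$, and moreover any two distinct primitive vectors of a unimodular lattice lying in $R_n$ span a sublattice of covolume $\le 2e$; fixing one such vector $v_1$ (necessarily with second coordinate $\asymp e^n$) and completing it to a basis $(v_1,w_1)$, every other one is $av_1+bw_1$ with $|b|\le 5$ and $a$ confined to an interval of length $<3$. Hence $\widehat{\1_{E_n}}=\widehat{\1_{R_n}}\le C_0$ on all of $X$ for an absolute $C_0$, and likewise the tilted transforms $\widehat{\1_{a(e^{2j})R_n}}$ are bounded, which removes the usual difficulty that Siegel transforms blow up in the cusp.

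\emph{First moment.} I would mollify $\1_{E_n}$ on $X$ at a scale $\delta_n\to 0$ with $\sum_n\delta_n<\infty$, sandwiching $\widehat{\1_{E_n^{-C\delta_n}}}\le\widehat{\1_{E_n}}\le\widehat{\1_{E_n^{+C\delta_n}}}$ by mollified transforms of the shrunk/fattened regions; these are admissible test functions with $\cS_{\infty,1}\ll\delta_n^{-A}$, $\cS_{2,1}\ll\delta_n^{-1}\mathrm{Leb}(E_n)^{1/2}$ (the $L^2$ bound uses $\widehat{\1_{E_n}}\le C_0$, whence $m_X(\widehat{\1_{E_n}}^2)\le C_0\,m_X(\widehat{\1_{E_n}})$) and $m_X=\zeta(2)^{-1}\mathrm{Leb}(\cdot)$ by Siegel's formula. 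Applying the single equidistribution hypothesis \eqref{single-eq-prop} at time $t=e^{2n}$ yields $\phi_n:=\int A_n\dd\xi=\zeta(2)^{-1}\sum_{q\in I_n}\psi(q)+O(\varepsilon_n)$ with $\sum_n\varepsilon_n<\infty$; summing and comparing $\sum$ with $\int$ via monotonicity of $\psi$ gives $\sum_{n\le K}\phi_n=\zeta(2)^{-1}\sum_{q\le e^K}\psi(q)+O(1)\to\infty$. Also $0\le A_n\le C_0$ and $\phi_n=O(1)$.

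\emph{Second moment.} The heart is the quasi‑orthogonality estimate
\[
\int\Bigl(\textstyle\sum_{N<n\le M}(A_n-\phi_n)\Bigr)^2\dd\xi\ \ll\ \textstyle\sum_{N<n\le M}\phi_n,
\]
which I would obtain by treating the covariance of two blocks $n<m$ according to $j=m-n$. The diagonal and the genuinely nearby blocks ($0\le j\le j_0$, $j_0$ a fixed constant) are handled trivially from boundedness, $A_nA_m\le\tfrac{C_0}{2}(A_n+A_m)$, giving $\sum_n\sum_{j\le j_0}|\operatorname{Cov}(A_n,A_{n+j})|\ll j_0\sum_n\phi_n$. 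For well‑separated blocks, $m\ge(1+\eta)n$ with $\eta>0$ small (fixed at the end), I would write $\int A_nA_m\dd\xi=\int f_1(a(t_n)u(s)x_0)f_2(a(t_m)u(s)x_0)\dd\xi(s)$ with $f_i$ the mollified $\widehat{\1_{E_i}}$ ($m_X(f_i)\asymp\phi_i$, $\cS_{\infty,1}(f_i)\ll\delta_i^{-1}$, $\cS_{2,1}(f_i)\ll\delta_i^{-1}\phi_i^{1/2}$) and apply the effective double equidistribution hypothesis \eqref{double-eq-prop}, which bounds $|\operatorname{Cov}(A_n,A_m)|$ by quantities with genuine exponential decay in $n$ and $m$, summably small. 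The remaining intermediate range $j_0<j<\eta n$ is the one not reached by \eqref{double-eq-prop}; there I would invoke the short‑range refinement \eqref{eq:all-range-preview}, whose three error terms — with the above Sobolev estimates and $t_i=e^{2i}$ — must be summed against $\sum_n\phi_n$, which forces a careful choice of $\eta$ in terms of the decay exponent $c=c(\eta)$ and of the mollification scales $\delta_n$ (balancing the mollification error, which pushes $\delta_n$ down, against the Sobolev‑norm growth it causes). Combined with $\int A_n^2\dd\xi\le C_0\phi_n$ on the diagonal, this gives the quasi‑orthogonality bound.

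\emph{Conclusion.} By Schmidt's variance lemma (the quasi‑orthogonality criterion for almost‑everywhere convergence, as in \cite{Schmidt}), $\sum_{n\le K}A_n(s)=\sum_{n\le K}\phi_n+o\bigl(\sum_{n\le K}\phi_n\bigr)$ for $\xi$‑a.e.\ $s$, hence $\sim\zeta(2)^{-1}\sum_{q\le e^K}\psi(q)$ since this diverges. Passing from $K=\lfloor\log N\rfloor$ to general $N$ costs $\le C_0$ in the count and $\le e$ in $\sum\psi$, both negligible, yielding \eqref{eq:primsol}; the variant with $-\psi(q)<qs-p\le 0$ follows by applying the result to the reflected measure $(s\mapsto -s)_\star\xi$, which satisfies the same hypotheses since conjugation by $\mathrm{diag}(1,-1)$ is a measure‑preserving isometry of $X$ fixing $x_0$. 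I expect the genuine obstacle to be exactly the intermediate regime of the variance estimate: blocks whose times $t_1,t_2$ are only polynomially separated (or comparable) are outside the reach of the double‑equidistribution hypothesis, so one is forced onto the short‑range bound \eqref{eq:all-range-preview} and must thread the needle between mollification error and Sobolev blow‑up — this is precisely the combinatorial bookkeeping of Schmidt's classical argument, here with the extra layers introduced by Siegel transforms and by the fact that equidistribution is only approximate.
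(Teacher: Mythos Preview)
Your overall strategy coincides with the paper's: Dani's correspondence, primitive Siegel transforms of thin rectangles (whose $L^\infty$-bound comes from $\psi(q)\le q^{-1}$), mollification, the double equidistribution hypothesis to control covariances, and Schmidt's variance lemma to conclude. Two design choices differ. The paper uses a \emph{fixed} mollification scale $\eps$ and a \emph{crude} rectangle $[0,r_k)\times(\tau^{-1}r_k,r_k]$ per block, then recovers the exact asymptotic by letting $\tau\to1$ and $\eps\to0$ at the end; this keeps all Sobolev norms $O_\eps(1)$ and makes the variance bookkeeping clean. Your choice of exact regions $R_n$ and variable scales $\delta_n\to0$ trades one simplification for another and is not wrong, but it forces exactly the ``threading the needle'' you flag, since the Sobolev norms now grow with $n$.

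There is, however, one genuine missing ingredient. Your quasi-orthogonality claim
\[
\int\Bigl(\sum_{N<n\le M}(A_n-\phi_n)\Bigr)^2\dd\xi\ \ll\ \sum_{N<n\le M}\phi_n
\]
cannot hold as stated without an additional lower bound on $\psi$. The double equidistribution error contains the term $\cS_{\infty,1}(f_1)\cS_{\infty,1}(f_2)\,t_2^{-c}$, which produces a contribution $\sum_{N<n\le m\le M}\delta_n^{-A}\delta_m^{-A}e^{-2cm}$ to the variance; this quantity depends only on $N,M$ and your mollification scales, not on $\psi$. If $\psi$ is extremely small on $(e^N,e^M]$ (say $\psi(q)\ll q^{-1}e^{-\sqrt{\log q}}$ there), then $\sum_{N<n\le M}\phi_n$ is much smaller than that absolute error, so the inequality fails. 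The paper deals with this by first invoking the \emph{convergent} case (\Cref{convergent-Khintchine}) to replace $\psi$ by $\max\{\psi(q),\,q^{-1}\log^{-2}q\}$: the convergent case guarantees this changes the count $\sT_N(s)$ by only $O_s(1)$ for $\xi$-a.e.\ $s$, while the new lower bound yields $\phi_n\gg n^{-2}$, which is exactly what is needed to absorb $t_l^{-c}\ll \tau^{-cl}$ into $\phi_k\cdot\tau^{-c(l-k)}$ and hence into $\sum\phi_k$. Once you add this reduction, your argument goes through, and in fact the intermediate-range analysis simplifies: with bounded Sobolev norms and $\phi_k\gg k^{-2}$, the full-range estimate \eqref{eq:all-range} handles all pairs $t_1\le t_2$ uniformly, so no separate treatment of the three regimes is needed.
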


Without the extra domination assumption \eqref{eq:leqoverq} on the approximation function $\psi$, we still have a quantitative lower bound (which tends to infinity).
\begin{corollary}  \label{quant-Khintchine}
If $\xi$ satisfies \eqref{double-eq-prop} on $\SL_{2}(\R)/\SL_{2}(\Z)$ and $\psi: \mathbb{N}\to \R_{> 0}$ is non-increasing with  $\sum_q \psi(q)=\infty $, then  for $\xi$-almost every $s\in \R$,  as $N\to +\infty$, we have 
\begin{equation} \label{quant-khint-eq}
    \#\set{(p,q)\in \mathcal{P}(\Z^2):  \,1\leq q\leq N, \,0\leq qs-p < \psi(q)}\, \geq \,(1+o_{\xi,\psi,s}(1))   \zeta(2)^{-1} \sum_{q=1}^N \min(\psi(q),  q^{-1}).
\end{equation}
The same  holds if we ask for $- \psi(q) < qs-p \leq 0$ instead.
\end{corollary}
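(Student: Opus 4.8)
The plan is to reduce \Cref{quant-Khintchine} to \Cref{quant-Khintchine-1/q} by truncating $\psi$ at the critical rate $q^{-1}$. I would set
\[
\tilde\psi(q) := \min\bigl(\psi(q),\, q^{-1}\bigr), \qquad q \in \N,
\]
which is a positive, non-increasing function (being the pointwise minimum of two positive non-increasing functions) satisfying $\tilde\psi(q) \le q^{-1}$ for all $q$ by construction. The only point needing an argument is that $\sum_q \tilde\psi(q) = \infty$; granting this, \Cref{quant-Khintchine-1/q} applies verbatim to $\tilde\psi$ and gives, for $\xi$-almost every $s\in\R$,
\[
\#\set{(p,q) \in \cP(\Z^2) : 1 \le q \le N,\ 0 \le qs - p < \tilde\psi(q)} \,\sim_{\xi,\psi,s}\, \zeta(2)^{-1} \sum_{q=1}^N \tilde\psi(q)
\]
as $N \to +\infty$, together with the companion statement for $-\tilde\psi(q) < qs - p \le 0$. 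Since $\tilde\psi \le \psi$ pointwise, every solution $(p,q)$ counted on the left also satisfies $1\le q\le N$ and $0 \le qs - p < \psi(q)$, so the left-hand side is a lower bound for the count appearing in \eqref{quant-khint-eq}; and because $\sum_{q=1}^N \tilde\psi(q) = \sum_{q=1}^N \min(\psi(q),q^{-1})$, this yields exactly \eqref{quant-khint-eq}. The case $-\psi(q) < qs-p \le 0$ follows the same way from the corresponding part of \Cref{quant-Khintchine-1/q}.

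It remains to check $\sum_q \tilde\psi(q) = \infty$, which I would prove by contraposition: assume $\sum_q \tilde\psi(q) < \infty$ and deduce $\sum_q \psi(q) < \infty$. Put $B = \set{q \in \N : \psi(q) > q^{-1}}$, so that $\sum_{q \in B} q^{-1} \le \sum_q \tilde\psi(q) < \infty$; in particular the tails $\sum_{q \in B,\, q \ge n} q^{-1}$ tend to $0$. As $\sum_{n \le q \le 2n} q^{-1} \ge \log 2 - o(1)$, for every sufficiently large $n$ the block $[n, 2n]$ must contain an integer $q^* \notin B$, i.e. with $\psi(q^*) \le (q^*)^{-1}$. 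Taking $n = \lfloor q/2 \rfloor$ for $q$ large (so that $q^* \in [q/3, q]$), monotonicity of $\psi$ gives $\psi(q) \le \psi(q^*) \le (q^*)^{-1} \le 3 q^{-1}$, hence $\tilde\psi(q) = \min(\psi(q), q^{-1}) \ge \tfrac13 \psi(q)$ for all large $q$. Therefore $\sum_q \psi(q) \le 3 \sum_q \tilde\psi(q) + O(1) < \infty$, contradicting the divergence hypothesis. (If $B$ is finite the conclusion is immediate, since then $\tilde\psi(q) = \psi(q)$ for all large $q$.)

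There is essentially no analytic obstacle here: all the substance lies in \Cref{quant-Khintchine-1/q}, and the passage to a general non-increasing $\psi$ is purely formal apart from the elementary monotonicity lemma above. The one subtlety in that lemma is excluding the scenario where $B$ contains arbitrarily long runs of consecutive integers (which would allow $\psi$ to stay far above $q^{-1}$ on a set carrying the divergence of $\sum_q \psi(q)$ while $\sum_q \tilde\psi(q)$ converges) — and this is precisely what the convergence of $\sum_{q \in B} q^{-1}$ rules out.
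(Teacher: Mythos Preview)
Your proposal is correct and follows the same route as the paper: apply \Cref{quant-Khintchine-1/q} to $\tilde\psi(q)=\min(\psi(q),q^{-1})$ and check that $\sum_q\tilde\psi(q)=\infty$. The only difference is in this last check---the paper uses Cauchy condensation (reducing to $\sum_n\min(2^n\psi(2^n),1)=\infty$, which is immediate from $\sum_n 2^n\psi(2^n)=\infty$), while you argue directly by contraposition; both are fine.
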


Assuming \Cref{quant-Khintchine-1/q}, we establish \Cref{quant-Khintchine} and \Cref{ThA'}.

\begin{proof}[Proof of \Cref{quant-Khintchine}] It follows from \Cref{quant-Khintchine-1/q} applied to the approximation function $q\mapsto \min(\psi(q),  q^{-1})$. Indeed, this application is allowed because we have  $\sum_{q} \min(\psi(q),  q^{-1})=\infty$. To justify this, observe that given any non-increasing function $\Psi :\N\rightarrow \R^+$, we have $\sum_{q} \Psi(q)=\infty$ if and only if $\sum_{n} 2^n\Psi(2^n)=\infty$. Hence $\sum_{q} \min(\psi(q),  q^{-1})=\infty$ amounts to $\sum_{n} \min(2^n\psi(2^n), 1)=\infty$ which in turns follows from $\sum_{n} 2^n\psi(2^n)=\infty$. 
\end{proof}

\begin{proof}[Proof of  \Cref{ThA'}]
As in the proof of \Cref{ThB'}, we may assume $\lambda$ {is}  supported on $\Aff(\R)^+$, see \Cref{red-Aff+}. Hence we are reduced to the setting of \Cref{Sec-preliminary}. 
By \Cref{decorrelation-sigma}, $\sigma$ satisfies the effective double equidistribution property  \eqref{double-eq-prop} (and in particular  \eqref{single-eq-prop}). Hence both \Cref{convergent-Khintchine} and \Cref{quant-Khintchine} apply, yielding the announced dichotomy. 
\end{proof}

We now pass to the proof of \Cref{quant-Khintchine-1/q}. In a first step we will show that effective double equidistribution in fact yields  decorrelation estimates that are valid for all times $t_{2}\geq t_{1}\geq1$. Then we will exploit these  estimates through the mean of Dani's correspondence to deduce the theorem.

\subsection{Single vs double equidistribution} 
Note that effective double equidistribution  \eqref{double-eq-prop} implies effective single equidistribution \eqref{single-eq-prop}. Conversely, effective single equidistribution  gives a double equidistribution estimate in the short-range regime. The proof exploits  the decay of matrix coefficients as in \cite[Theorem 10.1]{KL23}. 

\begin{lemma}\label{lm:short-range-decorr}
Let $\xi$ be a Borel probability measure on $\R$ satisfying \eqref{single-eq-prop} with associated constants $C> 1, c\in (0,1)$. Then for every $t_1, t_{2} \geq 1$ such that $t^{1+c/2}_{1}\geq t_{2}\geq t_{1}$ and every $f_1, f_2 \in B^\infty_{\infty,1}(X)$,
\begin{equation}
\label{eq:short-range}
\Delta^{\xi}_{f_1,f_2}(t_1,t_2) 
\ll  \cS_{2,1}(f_1) \cS_{2,1}(f_2) t_1^{\delta_0} t_2^{-\delta_0}  + C \cS_{\infty,1}(f_1) \cS_{\infty,1}(f_2) t^{-c/3}_{2}.
\end{equation}
where $\delta_0=\delta_{0}(\Lambda)>0$ arises from \eqref{eq:decay cor}.
\end{lemma}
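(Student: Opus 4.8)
The idea is to reduce the short-range double equidistribution to single equidistribution by integrating out one of the two functions via a suitable conditional expectation, and then estimate the resulting correlation using the exponential decay of matrix coefficients \eqref{eq:decay cor}. Write $g = a(t_2 t_1^{-1})$, so that $a(t_2)u(s)x_0 = g\,a(t_1)u(s)x_0$. Then
\[
\int_\R f_1\bigl(a(t_1)u(s)x_0\bigr)\,f_2\bigl(a(t_2)u(s)x_0\bigr)\dd\xi(s)
= \int_\R f_1\bigl(a(t_1)u(s)x_0\bigr)\,(g.f_2)^{\vee}\bigl(a(t_1)u(s)x_0\bigr)\dd\xi(s),
\]
with the obvious notational care about whether we apply $g$ on the left. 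So the double integral at times $(t_1,t_2)$ is a \emph{single} integral at time $t_1$ against the test function $F := f_1 \cdot (g.f_2)$ (product on $X$), where $g.f_2 = f_2 \circ g^{-1}$.

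The plan is then: (1) write $F = f_1 \cdot (g.f_2)$ and split $g.f_2 = m_X(f_2) + (g.f_2 - m_X(f_2))$, so that $F = m_X(f_2) f_1 + f_1\cdot\bigl((g.f_2) - m_X(f_2)\bigr)$. The first piece, fed into the single equidistribution hypothesis \eqref{single-eq-prop} for $f_1$, contributes $O\bigl(C\,\cS_{\infty,1}(f_1)\,|m_X(f_2)|\,t_1^{-c}\bigr)$ and combines with $m_X(f_1)m_X(f_2)$ to reconstruct the desired main term. (2) For the second piece, apply \eqref{single-eq-prop} to $F_1 := f_1\cdot\bigl((g.f_2) - m_X(f_2)\bigr)$: since this function has the form ``smooth times (something with controlled Sobolev norm and zero mean only after we apply $P$)'', we get $\int_\R F_1(a(t_1)u(s)x_0)\dd\xi(s) = m_X(F_1) + O\bigl(C\,\cS_{\infty,1}(F_1)\,t_1^{-c}\bigr)$. (3) Now $m_X(F_1) = \langle f_1, \overline{g.f_2} \rangle - m_X(f_1)m_X(f_2) = \langle f_1 - m_X(f_1), (g).(f_2 - m_X(f_2))\rangle$ (up to conjugation/adjoint bookkeeping, and using that $m_X$ is $G$-invariant), which by the decay of matrix coefficients \eqref{eq:decay cor} is $\ll \norm{g}^{-\delta_0}\cS_{2,1}(f_1)\cS_{2,1}(f_2) \asymp (t_2/t_1)^{-\delta_0/2}\cS_{2,1}(f_1)\cS_{2,1}(f_2)$, which in the range $t_2 \le t_1^{1+c/2}$ dominates a term of the shape $t_1^{\delta_0}t_2^{-\delta_0}$ as in \eqref{eq:short-range} (one checks $(t_2/t_1)^{-\delta_0/2} \le t_1^{\delta_0}t_2^{-\delta_0}$ fails in general — so actually the correct bookkeeping is $\norm{g}^{-\delta_0} \asymp (t_2/t_1)^{-\delta_0}$ with our normalization of $a(\cdot)$, giving $t_1^{\delta_0}t_2^{-\delta_0}$ directly). (4) For the error term from step (2), estimate $\cS_{\infty,1}(F_1) \ll \cS_{\infty,1}(f_1)\,\cS_{\infty,1}(g.f_2)$; the difficulty is that differentiating $g.f_2$ along the Lie algebra brings down a factor $\norm{\Ad(g)} \asymp (t_2/t_1) \le t_2$, so $\cS_{\infty,1}(g.f_2) \ll (t_2/t_1)\,\cS_{\infty,1}(f_2)$, and multiplying by $t_1^{-c}$ gives a term $\ll C\,\cS_{\infty,1}(f_1)\cS_{\infty,1}(f_2)\,(t_2/t_1)\,t_1^{-c}$. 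In the range $t_2 \le t_1^{1+c/2}$ one has $(t_2/t_1) t_1^{-c} \le t_1^{c/2}t_1^{-c} = t_1^{-c/2} \le t_2^{-c/3}$ (using $t_2 \le t_1^{1+c/2} \le t_1^{3/2}$, hence $t_2^{-c/3}\ge t_1^{-c/2}$), which is exactly the second term claimed in \eqref{eq:short-range}.

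The main obstacle — and the only place requiring genuine care rather than bookkeeping — is step (3)–(4): tracking how the Sobolev norm $\cS_{\infty,1}$ and the $L^2$-Sobolev norm $\cS_{2,1}$ of the product $f_1 \cdot (g.f_2)$ grow under the translation by $g = a(t_2 t_1^{-1})$, and verifying that the exponents line up so that the matrix-coefficient gain $(t_2/t_1)^{-\delta_0}$ beats the derivative loss $(t_2/t_1)$ precisely in the regime $t_2 \le t_1^{1+c/2}$ while still producing the claimed shape $\cS_{2,1}(f_1)\cS_{2,1}(f_2)t_1^{\delta_0}t_2^{-\delta_0}$. One should also be careful about adjoints: $\langle g.\phi, \psi\rangle = \langle \phi, g^{-1}.\psi\rangle$ and which variable the geodesic acts on in $a(t_2)u(s) = a(t_2t_1^{-1})\cdot a(t_1)u(s)$, but this is routine. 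Everything else — the triangle inequality splitting, applying the hypothesis \eqref{single-eq-prop} twice, and invoking \eqref{eq:decay cor} — is direct.
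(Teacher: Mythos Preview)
Your proposal is correct and follows essentially the same approach as the paper: define the product function $F(x) = f_1(x)\,f_2(a(t_2/t_1)x)$, apply the single equidistribution hypothesis \eqref{single-eq-prop} to $F$ at time $t_1$, bound the main term $m_X(F) - m_X(f_1)m_X(f_2)$ via the decay of matrix coefficients \eqref{eq:decay cor}, and control the error $\cS_{\infty,1}(F)\,t_1^{-c}$ using $\cS_{\infty,1}(F) \ll (t_2/t_1)\,\cS_{\infty,1}(f_1)\,\cS_{\infty,1}(f_2)$ together with the short-range constraint $t_2 \le t_1^{1+c/2}$. Your extra preliminary split $g.f_2 = m_X(f_2) + (g.f_2 - m_X(f_2))$ is harmless but unnecessary---the paper applies \eqref{single-eq-prop} directly to $F$ without splitting---and your hesitation about the exponent in $\norm{a(t_2/t_1)}^{-\delta_0}$ resolves the way you eventually state: with the norm convention used here this gives exactly $t_1^{\delta_0}t_2^{-\delta_0}$.
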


\begin{proof}
Let $t_2\geq t_{1}\geq 1$ and $f_1,f_2\in B^\infty_{\infty,1}(X)$.
Set $F : X \to \R$ to be 
\[
F(x) = f_1(x) f_2(a(t_2/t_1)x), \quad x \in X,
\]
so that $F\bigl(a(t_1) u(s) x_0\bigr) = f_1\bigl(a(t_1) u(s) x_0\bigr) f_2\bigl(a(t_2) u(s) x_0\bigr)$ for all $s \in \R$.
Then 
\begin{align*}
\cS_{\infty,1}(F) &\ll \cS_{\infty,1}(f_1) \cS_{\infty,1}(a(t_1/t_2).f_2) \ll \cS_{\infty,1}(f_1) \norm{\Ad(a(t_1/t_2))}\cS_{\infty,1}(f_2) \\
&\ll t_2/t_1  \cS_{\infty,1}(f_1) \cS_{\infty,1}(f_2).
\end{align*}
By \eqref{single-eq-prop} applied to $F$ and $t_1$,
\[
\abse{\int_\R F\bigl(a(t_1) u(s) x_0\bigr) \dd \xi(s) - \langle f_1, a(t_1/t_2).f_2 \rangle } \leq C \cS_{\infty, 1}(F) t_1^{-c}
\]
By \eqref{eq:decay cor}, we have
\[
\abse{\langle f_1, a(t_1/t_2).f_2\rangle - m_X(f_1) m_X(f_2)} \ll \norm{a(t_1/t_2)}^{-\delta_0} \cS_{2,1}(f_1) \cS_{2,1}(f_2).
\]
Combining the above together, we obtain 
$$\Delta^{\xi}_{f_1,f_2}(t_1,t_2) 
\ll  \cS_{2,1}(f_1) \cS_{2,1}(f_2) t_1^{\delta_0} t_2^{-\delta_0}  + C \cS_{\infty,1}(f_1) \cS_{\infty,1}(f_2) t_{2}t_{1}^{-1-c}$$
whence the desired inequality in the regime $t^{1+c/2}_{1}\geq t_{2}\geq t_{1}$.
\end{proof}

We deduce that even though double equidistribution was formulated with the separation assumption $t_{2}> t^{1+\eta}_{1}$ on the parameters $t_{1},t_{2}\geq 1$, it still provides  estimates in the short-range regime $t_{2}\leq t^{1+\eta}_{1}$. Put together, we obtain the following result.

\begin{corollary} \label{cor-double-+}
A probability measure $\xi$ on $\R$ satisfies the effective double equidistribution property \eqref{double-eq-prop} if and only if  there exist constants $c > 0$ and $C > 1$ such that for every $f_1,f_2 \in B^{\infty}_{\infty,1}(X) $ and all $t_2 \geq t_1 > 1$.
\begin{multline}
\label{eq:all-range}
\Delta^{\xi}_{f_1,f_2}(t_1,t_2) 
\leq C \cS_{2,1}(f_1) \cS_{2,1}(f_2) t_1^{c} t_2^{-c} \\ + C \cS_{\infty, 1}(f_1) \abs{m_{X}(f_2)}  t_1^{-c} + C \cS_{\infty, 1}(f_{1})\cS_{\infty, 1}(f_{2}) t_2^{-c},
\end{multline}
\end{corollary}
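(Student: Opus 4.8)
The equivalence has two directions. The implication \eqref{eq:all-range} $\Rightarrow$ \eqref{double-eq-prop} is immediate: under the separation hypothesis $t_2 \geq t_1^{1+\eta}$, the first term on the right-hand side of \eqref{eq:all-range} is $C \cS_{2,1}(f_1)\cS_{2,1}(f_2) t_1^c t_2^{-c} \leq C \cS_{2,1}(f_1)\cS_{2,1}(f_2) t_2^{c/(1+\eta) - c} = C\cS_{2,1}(f_1)\cS_{2,1}(f_2) t_2^{-c\eta/(1+\eta)}$, which can be absorbed into the last term of \eqref{double-eq-prop} (after renaming $c$ to $\min\{c, c\eta/(1+\eta)\}$ and using $\cS_{2,1} \leq \cS_{\infty,1}$). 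So the content is the converse.

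For the direction \eqref{double-eq-prop} $\Rightarrow$ \eqref{eq:all-range}, fix $\eta := c_0/2$ where $c_0$ is the constant $c$ appearing in \eqref{single-eq-prop} — note that \eqref{double-eq-prop} with any $\eta$ already implies \eqref{single-eq-prop} (take $f_2 \equiv 1$, $t_2 \to \infty$), so we have such a $c_0 > 0$ at our disposal. We then split into two ranges. In the long-range regime $t_2 \geq t_1^{1+\eta}$, the bound \eqref{double-eq-prop} itself gives precisely the last two terms of \eqref{eq:all-range} (the first term being nonnegative, it only helps). In the short-range regime $t_1 \leq t_2 \leq t_1^{1+\eta} = t_1^{1+c_0/2}$, we invoke \Cref{lm:short-range-decorr}, which yields
\[
\Delta^{\xi}_{f_1,f_2}(t_1,t_2) \ll \cS_{2,1}(f_1)\cS_{2,1}(f_2) t_1^{\delta_0} t_2^{-\delta_0} + C\cS_{\infty,1}(f_1)\cS_{\infty,1}(f_2) t_2^{-c_0/3}.
\]
The first term here is of the form $C' \cS_{2,1}(f_1)\cS_{2,1}(f_2) t_1^{c'} t_2^{-c'}$ with $c' = \delta_0$, matching the first term of \eqref{eq:all-range}, and the second term is dominated by the last term of \eqref{eq:all-range}. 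Taking the constant $c$ in \eqref{eq:all-range} to be $\min\{c_0, c_0/3, \delta_0\}$ and $C$ large enough to cover both regimes and the implied constants, we obtain \eqref{eq:all-range} for all $t_2 \geq t_1 > 1$.

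The only mildly delicate point is bookkeeping: one must check that a single pair of constants $(C,c)$ works uniformly across the two regimes and that the exponents combine correctly (in particular that replacing the various exponents by their minimum does not break the long-range estimate, where $t_1^c t_2^{-c}$ with a smaller $c$ is still fine since $t_1 \leq t_2$). There is no substantive obstacle — the work has already been done in \Cref{lm:short-range-decorr} and \eqref{double-eq-prop}; this corollary is purely a matter of packaging the short-range and long-range bounds into one clean statement. I would also remark, as the paper does in \eqref{eq:all-range-preview}, that combining this with \Cref{decorrelation-sigma} gives the all-range double equidistribution for self-similar $\sigma$.
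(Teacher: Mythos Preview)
Your proof is correct and follows essentially the same approach as the paper's: the reverse implication uses $\cS_{2,1}\leq \cS_{\infty,1}$ together with $t_1^c t_2^{-c}\leq t_2^{-c\eta/(1+\eta)}$ under the separation hypothesis, and the forward implication combines \eqref{double-eq-prop} in the long-range regime with \Cref{lm:short-range-decorr} in the short-range regime (the latter being applicable since \eqref{double-eq-prop} implies \eqref{single-eq-prop}). Your write-up simply makes explicit the choice $\eta=c_0/2$ and the bookkeeping of constants that the paper leaves implicit.
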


\begin{proof}
As the $\cS_{2,1}$-norm is bounded by the $\cS_{\infty,1}$-norm, the converse direction is clear. Assume  $\xi$ satisfies \eqref{double-eq-prop}.
Recalling that \eqref{double-eq-prop} implies \eqref{single-eq-prop}, \Cref{lm:short-range-decorr} applies and yields the upper bound  in the short-range regime (possibly with different values of $C,c$). It also holds in the non short-range regime by definition of \eqref{double-eq-prop}.
\end{proof}

\subsection{Lower bound estimate} 
In this subsection, we establish the lower bound in our quantitative Khintchine dichotomy  \Cref{quant-Khintchine-1/q}. Notations refer to \Cref{quant-Khintchine-1/q}, in particular $X$ {here is}  $\SL_{2}(\R)/\SL_{2}(\Z)$ and $\psi(q)\leq q^{-1}$. We extend $\psi$ to a function $\R^+ \to \R_{>0}$ by setting $\psi(q) = \psi(\lceil q \rceil)$ for non-integer values of $q$, so that it is still non-increasing and smaller than $q^{-1}$.

For $N \geq 1$ and $s \in \R$, we write $\sT_N(s)$ for the left-hand side of \eqref{eq:primsol}, on which we aim to obtain a lower bound.
We fix a parameter $\tau \in (1, 2]$ and define for $k \geq 0$, 
\[
\sS_{k}(s) := \#\set{(p,q)\in \mathcal{P}(\Z^2)\,:\,  \tau^{k-1} < q \leq \tau^k,\, 0\leq qs - p < \psi(\tau^k)}.
\]
Letting $n \geq 1$ be such that $\tau^n \leq N < \tau^{n+1}$ and using that $\psi$ is non-increasing, we have 
\begin{equation}
\label{eq:sTsS}
\sT_N(s) \geq \sT_{\tau^n}(s) \geq \sum_{k=1}^n \sS_{k}(s).
\end{equation}

We bound below  the sum on the right hand side.
\begin{proposition}\label{pr:rough}
Under the assumptions of \Cref{quant-Khintchine-1/q},  for $\xi$-almost all $s \in \R$, for every $\eps > 0$, for all large enough $n$, we have 
\begin{equation}
\label{eq:sumSk}
\sum_{k=1}^n \sS_{k}(s) \geq (1-\eps) \zeta(2)^{-1}\sum_{k=1}^n (\tau^k - \tau^{k-1}) \psi(\tau^k). 
\end{equation}
\end{proposition}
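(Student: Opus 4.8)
The plan is to apply a second-moment (variance) argument combined with a Borel--Cantelli type argument, in the spirit of Schmidt's proof of the quantitative Khintchine theorem. First I would translate each counting quantity $\sS_k(s)$ into a dynamical expression. Via Dani's correspondence, the condition $\tau^{k-1} < q \leq \tau^k$ and $0 \leq qs - p < \psi(\tau^k)$ on a primitive vector $(p,q)$ corresponds to the lattice $a(t_k)u(s)\Z^2$ (for a suitable time $t_k \simeq \tau^{2k}/\psi(\tau^k)$ or similar) having a primitive vector in a fixed small box $B_k \subset \R^2$ whose area is comparable to $(\tau^k - \tau^{k-1})\psi(\tau^k)/\tau^{2k}\cdot t_k \simeq (1-\tau^{-1})$, up to normalization. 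Concretely, I would write $\sS_k(s) = F_k(a(t_k)u(s)x_0)$ where $F_k$ is (a smoothing of) the Siegel-type function counting primitive lattice points in $B_k$, and recall the Siegel--Veech formula $m_X(F_k) = \zeta(2)^{-1}\vol(B_k) = \zeta(2)^{-1}(\tau^k-\tau^{k-1})\psi(\tau^k)\cdot(\text{normalization})$, which should match the right-hand side of \eqref{eq:sumSk}.

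Next I would estimate the first and second moments of $S_n(s) := \sum_{k=1}^n \sS_k(s)$ with respect to $\xi$. For the first moment, effective single equidistribution \eqref{single-eq-prop} applied to each (smoothed) $F_k$ gives $\int \sS_k \dd\xi = m_X(F_k) + O(\cS_{\infty,1}(F_k)t_k^{-c})$; since $\psi(q)\leq q^{-1}$ keeps the boxes from degenerating too badly, the Sobolev norms $\cS_{\infty,1}(F_k)$ grow at most polynomially in $\tau^k$ while $t_k$ grows faster, so the error terms are summable (or at least negligible against $\sum_k m_X(F_k)$, which diverges because $\sum_q\psi(q)=\infty$). Hence $\E_\xi[S_n] = (1+o(1))\zeta(2)^{-1}\sum_{k=1}^n(\tau^k-\tau^{k-1})\psi(\tau^k) =: (1+o(1))V_n$ with $V_n\to\infty$. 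For the second moment I would expand $\E_\xi[S_n^2] = \sum_{k,l}\int \sS_k\sS_l\dd\xi$ and bound the off-diagonal terms $k<l$ using effective \emph{double} equidistribution in the form of \Cref{cor-double-+}: $\int \sS_k(s)\sS_l(s)\dd\xi(s) = \Delta^\xi$-type expression giving $m_X(F_k)m_X(F_l) + (\text{error})$, where the error involves $\cS_{2,1}(F_k)\cS_{2,1}(F_l)t_k^c t_l^{-c} + \cS_{\infty,1}(F_k)|m_X(F_l)|t_k^{-c} + \cS_{\infty,1}(F_k)\cS_{\infty,1}(F_l)t_l^{-c}$. The diagonal terms $k=l$ are bounded by $\int \sS_k^2\dd\xi$, which by single equidistribution is $m_X(F_k^2)+o(1)$; here $m_X(F_k^2)$ can be controlled by a Rogers-type second-moment formula or, more simply, since the box $B_k$ is thin, a primitive lattice can contain at most $O(1)$ (in fact essentially at most one) point of $B_k$, so $\sS_k$ is close to an indicator and $\sS_k^2 \lesssim \sS_k$, giving a diagonal contribution $O(V_n)$, which is lower-order compared to $V_n^2$.

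Putting these together, I get $\mathrm{Var}_\xi(S_n) = \E_\xi[S_n^2] - (\E_\xi[S_n])^2 = O(V_n) + (\text{summable/small error})$, hence $\mathrm{Var}_\xi(S_n) = o(V_n^2)$. By Chebyshev's inequality, $\xi\{s : |S_n(s) - \E_\xi[S_n]| > \eps V_n\} \leq \mathrm{Var}_\xi(S_n)/(\eps V_n)^2 \to 0$. To upgrade this to an almost-sure statement along the full sequence, I would pass to a rapidly growing subsequence $n_j$ (chosen so that the Chebyshev bounds are summable, using $V_n\to\infty$ at a controlled rate — a standard trick is to take $n_j$ minimal with $V_{n_j}\geq j^2$ or similar), apply Borel--Cantelli to conclude $S_{n_j}(s) = (1+o(1))V_{n_j}$ for $\xi$-a.e.\ $s$, and then interpolate between consecutive $n_j$ using monotonicity of $S_n$ in $n$ together with the slow variation of $V_n$ (controlled by $\tau$ close to $1$). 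This yields \eqref{eq:sumSk} for $\xi$-a.e.\ $s$ and every $\eps>0$.

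The main obstacle I anticipate is the second-moment control: both ensuring the diagonal terms $\int \sS_k^2\dd\xi$ are genuinely lower-order (requiring the geometric observation that a thin box contains at most boundedly many primitive points of a unimodular lattice, or else invoking a Rogers-type formula on $X$), and checking that all the error terms from \Cref{cor-double-+}, once summed over the $O(n^2)$ pairs $(k,l)$ and weighted by the polynomially-growing Sobolev norms of the smoothed counting functions $F_k$, remain $o(V_n^2)$. This forces a careful choice of the smoothing scale for $F_k$ (fine enough that $m_X(F_k)$ is accurate, coarse enough that $\cS_{\infty,1}(F_k)$ does not blow up) and an honest bookkeeping of how $t_k$, $\tau^k$, $\psi(\tau^k)$, and the smoothing scale interact — this is exactly where the hypothesis $\psi(q)\leq q^{-1}$ is used to keep the boxes $B_k$ from becoming too eccentric. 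The passage from the $\tau$-adic sum back to the genuine counting function $\sT_N(s)$ and ultimately to the sharp asymptotic \eqref{eq:primsol} (letting $\tau\downarrow 1$) is then a routine sandwiching argument, deferred to the remainder of the proof.
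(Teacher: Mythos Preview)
Your approach is essentially the same as the paper's: Siegel transform plus smooth approximation, first and second moment control via single and double equidistribution, and a Schmidt-type variance/Borel--Cantelli argument for almost-sure convergence. Two small differences worth noting: the paper packages your Chebyshev--subsequence--interpolation step as a black-box citation of Schmidt's lemma (\Cref{Lemma: Schmidt's argument}), which requires the variance bound on \emph{all} intervals $[m,n]$ rather than just $[1,n]$; to secure this uniformly, the paper first reduces to the case $\psi(q) \geq q^{-1}\log^{-2} q$ by invoking the convergent case (\Cref{convergent-Khintchine}), a trick your direct approach can sidestep. Regarding your flagged obstacle on Sobolev norms: the paper's construction (convolving by a bump $\theta_\eps$ at a \emph{fixed} scale $\eps$ in $G$, independent of $k$) gives $\cS_{\infty,1}(\varphi_k) \ll \eps^{-1}$ \emph{uniformly} in $k$, not merely polynomially in $\tau^k$ as you feared --- the point is that $r_k \leq 1$ keeps the boxes $R_k$ inside the unit square, so $\widetilde\1_{R_k}$ is $L^\infty$-bounded by $2$, and all the error summations then converge geometrically.
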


The lower bound in \Cref{quant-Khintchine-1/q} follows at once. 
\begin{proof}[Proof of lower bound in \eqref{eq:primsol} using \Cref{pr:rough}]
In view of \eqref{eq:sTsS} and \Cref{pr:rough}, it suffices to show that for any $\eps > 0$, there is some $\tau > 1$ such that 
\[
\sum_{k=1}^n (\tau^k - \tau^{k-1}) \psi(\tau^k) \geq (1 - 3 \eps) \sum_{q=1}^N \psi(q)
\]
whenever $\tau^n \leq N < \tau^{n+1}$ and $N$ is large enough (in terms of $\psi$ and $\eps$).

Indeed, we can pick $\tau = 1 + \eps$. Because $\psi$ is non-increasing, we have
\[
\sum_{q= \lceil \tau^k \rceil}^{\lceil \tau^{k+1} \rceil - 1} \psi(q) \leq (\lceil \tau^{k+1} \rceil - \lceil \tau^{k} \rceil)\psi(\tau^k) \leq (\tau + \eps) (\tau^k - \tau^{k-1}) \psi(\tau^k)
\]
for all $k \geq 1$ large enough.
Summing up to $k=n$ yields the desired inequality.
\end{proof}
\smallskip

We now turn to the proof of \Cref{pr:rough}.

\smallskip
First, we invoke Dani's correspondence to give the quantity $\sS_k(s)$ a dynamical interpretation.
Consider $X = \SL_2(\R) /\SL_2(\Z)$ and  $x_0 = \SL_2(\Z)/\SL_2(\Z) \in X$  the identity coset.
For a function $f : \R^2 \to [0, +\infty]$, we denote by $\widetilde{f} : X \to [0, +\infty]$ its primitive Siegel transform. It is defined by: $\forall g \in G$,
\[
\widetilde{f}(gx_0) = \sum_{v \in \cP(\Z^2)} f(g v).
\]
 For each $k \geq 1$, consider the quantities $r_k, t_k \in \R_{>0}$ such that 
\[\tau^k= r_k t_k^{1/2},\qquad \psi(\tau^k)= r_k t_k^{-1/2},\]
or equivalently
\begin{equation} \label{def-tk-rk}
r_k^{2}= \tau^k \psi(\tau^k),\qquad  t_k = \tau^k\psi(\tau^k)^{-1}.
\end{equation}
Consider the rectangle $R_k = [0,r_k) \times (\tau^{-1}r_k,r_k] \subset \R^2$.
Direct computation shows that for any $s \in \R$,
\[
\sS_{k}(s)= \widetilde\1_{R_k}\bigl(a(t_{k})u(s)x_{0}\bigr).
\]

Next, we construct  smooth lower approximations $(\varphi_k)_{k\geq1}$ of the functions $(\widetilde\1_{R_k})_{k\geq1}$. This substitution will allow us to use  equidistribution estimates.
Let $\eps > 0$ be a (small) parameter.
Let $R_k^- := \bigl[\eps r_k,(1 - \eps)r_k\bigr) \times \bigl((\tau^{-1} + \eps)r_k, (1 - \eps)r_k\bigr] \subset R_k$ denote the rectangle shrunken by $\eps r_k$ on each side of $R_k$.
Note that for every $g \in B_{\eps/10} \subset G$, we have $g R_k^- \subset R_k$ and hence 
$g_* \widetilde\1_{R_k^-} \leq \widetilde\1_{R_k}$.
Let $\theta_\eps : G \to \R^+$ be a smooth bump function supported on $B_{\eps/10}$ such that $m_G(\theta_\eps) = 1$ and $\cS_{\infty,1}(\theta_\eps) \ll \eps^{-4}$. 
We set for every $k \geq 1$,
\[
\varphi_k := \theta_\eps * \widetilde\1_{R_k^-}.
\]
In particular, $\varphi_k\leq \widetilde\1_{R_k}$, so  for every $s \in \R$,
\begin{equation}
\label{eq:YkleqSk}
\varphi_k\bigl(a(t_{k})u(s)x_{0}\bigr) \leq \sS_{k}(s).
\end{equation}

We now discuss the norm properties of the functions $\varphi_{k}$.

\begin{lemma} For every $k\geq 1$, we have
\begin{align}
&m_X(\varphi_k) =\zeta(2)^{-1} r_k^2 (1 - 2\eps)(1 - \tau^{-1} - 2 \eps) \label{L1norm-varphik}.\\
&\cS_{\infty,1}(\varphi_k)  \ll \eps^{-1} \,\,\,\,\,\,\,\,\,\,\,\,\,\,\,\,
\cS_{2,1}(\varphi_k) \ll \eps^{-1} \sqrt{m_X(\varphi_k)}.  \label{eq:controlSobolev}
\end{align}
\end{lemma}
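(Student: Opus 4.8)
The plan is to treat the mass identity \eqref{L1norm-varphik} and the Sobolev bounds \eqref{eq:controlSobolev} separately, the bridge between the two being a uniform $L^\infty$ bound on the Siegel transform $\widetilde\1_{R_k^-}$.

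For \eqref{L1norm-varphik}: since $\theta_\eps$ is a probability density for $m_G$ and $m_X$ is $G$-invariant, Fubini gives $m_X(\varphi_k) = m_X(\theta_\eps * \widetilde\1_{R_k^-}) = m_X(\widetilde\1_{R_k^-})$. I would then apply the Siegel summation formula for the primitive transform, $\int_X \widetilde f \, dm_X = \zeta(2)^{-1}\int_{\R^2} f\,d\mathrm{Leb}$ (valid for bounded compactly supported $f$), with $f = \1_{R_k^-}$. Since $R_k^-$ has side lengths $(1-2\eps)r_k$ and $(1-\tau^{-1}-2\eps)r_k$, its area is $(1-2\eps)(1-\tau^{-1}-2\eps)r_k^2$, which yields \eqref{L1norm-varphik}.

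The crux for \eqref{eq:controlSobolev} is the bound $\|\widetilde\1_{R_k^-}\|_{L^\infty(X)} \ll 1$, uniform in $k$ and $\eps$. Here the domination hypothesis \eqref{eq:leqoverq} is essential: by \eqref{def-tk-rk} one has $r_k^2 = \tau^k\psi(\tau^k) \le 1$, hence $R_k^- \subset [0,r_k]^2 \subset B(0,\sqrt2)$ and $\widetilde\1_{R_k^-} \le \widetilde\1_{B(0,\sqrt2)}$. For any unimodular lattice, two primitive vectors $v \ne \pm w$ span a rank-$2$ sublattice of covolume $\lvert v\wedge w\rvert \ge 1$, so $\sin\angle(v,w) \ge (\lvert v\rvert\lvert w\rvert)^{-1}$; consequently primitive vectors in $B(0,R)$ are pairwise at angular distance $\gg R^{-2}$, and there are $O(\max(R^2,1))$ of them. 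In particular $\widetilde\1_{B(0,\sqrt2)} \ll 1$ on all of $X$.

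Finally, for the Sobolev norms I would differentiate under the convolution by transferring each $D \in \Xi_1$ onto the smooth factor: a change of variable in the integral defining $\varphi_k$ gives $D\varphi_k = (D\theta_\eps) * \widetilde\1_{R_k^-}$, where $D\theta_\eps \in C^\infty_c(G)$ is supported in $B_{\eps/10}$. The standard convolution inequalities then give
\[
\|D\varphi_k\|_{L^\infty} \le \|D\theta_\eps\|_{L^1(m_G)}\,\|\widetilde\1_{R_k^-}\|_{L^\infty}, \qquad \|D\varphi_k\|_{L^1(m_X)} \le \|D\theta_\eps\|_{L^1(m_G)}\, m_X(\widetilde\1_{R_k^-}),
\]
while $\|D\theta_\eps\|_{L^1(m_G)} \le m_G(B_{\eps/10})\,\cS_{\infty,1}(\theta_\eps) \ll \eps^{3}\cdot\eps^{-4} = \eps^{-1}$. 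Combined with the uniform $L^\infty$ bound and $m_X(\widetilde\1_{R_k^-}) = m_X(\varphi_k)$, this gives $\cS_{\infty,1}(\varphi_k) \ll \eps^{-1}$, and via $\|D\varphi_k\|_{L^2}^2 \le \|D\varphi_k\|_{L^\infty}\|D\varphi_k\|_{L^1}$ it also gives $\|D\varphi_k\|_{L^2} \ll \eps^{-1}\sqrt{m_X(\varphi_k)}$; summing over the finitely many $D \in \Xi_1$ yields \eqref{eq:controlSobolev}. The main obstacle is precisely the uniform-in-$k$ boundedness of the Siegel transform: without \eqref{eq:leqoverq} the rectangles $R_k$ could reach into the cusp, where $\widetilde\1_{R_k}$ is unbounded; the interchange of $D$ with the convolution and the ensuing estimates are routine.
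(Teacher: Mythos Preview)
Your proof is correct and follows essentially the same route as the paper: Siegel summation for the mass, a uniform $L^\infty$ bound on the primitive Siegel transform via a lattice-point count (the paper observes directly that $[0,1)\times[0,1]$ contains at most two primitive vectors of any unimodular lattice, you use the ball $B(0,\sqrt2)$ and angular separation), and transferring derivatives onto $\theta_\eps$. For the $\cS_{2,1}$ bound the paper instead writes $\cS_{2,1}(\varphi_k)\le \cS_{\infty,1}(\varphi_k)\sqrt{m_X(\supp\varphi_k)}$ and controls the support via $m_X(\supp\widetilde\1_{R_k})\le m_X(\widetilde\1_{R_k})$ (integer values), but your $L^\infty$--$L^1$ interpolation is an equally valid and slightly cleaner alternative.
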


\begin{proof}
Note that by our assumption on $\psi$, we have $r_k \leq 1$ hence $R_k \subset [0,1) \times [0,1]$ contains at most $2$ primitive vectors of any unimodular lattice in $\R^2$.
It follows that
\[
\norm{\widetilde\1_{R^-_k}}_{L^\infty} \leq \norm{\widetilde\1_{R_k}}_{L^\infty} \leq 2
\]
and then $\norm{\varphi_k}_{L^\infty} \leq 2$.
We recall here that we use the \emph{primitive} Siegel transform. Had we used the non-primitive version of the Siegel transform, the norm $\norm{\widetilde\1_{R^-_k}}_{L^\infty}$ would not be finite.

By Siegel's summation formula~\cite[Equation 25]{Siegel},
\begin{align*}
m_X(\varphi_k) &= m_G(\theta_\eps) m_X(\widetilde\1_{R_k^-}) = \zeta(2)^{-1} \Leb_{\R^2}(R^-_{k}) \nonumber\\
&= \zeta(2)^{-1} r_k^2 (1 - 2\eps)(1 - \tau^{-1} - 2 \eps). \label{eq:mXphik}
\end{align*}
Then
\begin{equation*}
\label{eq:controlSinfty1}
\cS_{\infty,1}(\varphi_k) \leq m_G(\supp \theta_\eps) \cS_{\infty,1}(\theta_\eps)  \norm{\widetilde\1_{R_k^-}}_{L^\infty} \ll \eps^{-1}.
\end{equation*}
Finally, using that $\widetilde\1_{R_k}$ only takes  integer values,
\begin{multline*}
\label{eq:controlS21}
\cS_{2,1}(\varphi_k) \leq \cS_{\infty,1}(\varphi_k) \sqrt{m_X(\supp \varphi_k)} \\
\ll \eps^{-1}\sqrt{m_X(\supp \widetilde\1_{R_k})} 
\leq \eps^{-1} \sqrt{m_X(\widetilde\1_{R_k})}  \ll \eps^{-1} \sqrt{m_X(\varphi_k)}. 
\end{multline*}
where the last bound relies on \eqref{L1norm-varphik} and Siegel's summation formula again. 
\end{proof}


We consider $(\R,\xi)$ as a probability space.
Expectation $\E[\,\cdot\,]$ refers implicitely to this probability space.
Introduce for every $k \geq 1$, the random variable
\[ Y_k : \R \to \R, \quad s \mapsto \varphi_k\bigl(a(t_{k})u(s)x_{0}\bigr).\] 
Write
\[
y_k = m_X(\varphi_k) \in [0,1]
\]
and set $Z_k = Y_k - y_k$ as the (quasi-recentered) companion of $Y_k$. 

From the quantitative double equidistribution hypothesis on $\xi$, we deduce an upper bound on the second moment of a sum  of $Z_k$'s. 
\begin{proposition} \label{L2bound-Zj}
In the setting of \Cref{quant-Khintchine-1/q}, assume additionally that
\begin{equation}
\label{eq:qlogq}
\psi(q) \geq q^{-1}\log^{-2}(q), \quad  \forall q \geq3.
\end{equation}
Then there is a constant $C'$ such that for every subset $J \subseteq \N^*$ we have
\[\E \Bigl[\Bigl(\sum\nolimits_{j \in J}  Z_{j}\Bigr)^2\Bigr] \leq C' \sum_{j\in J} y_j.\]
\end{proposition}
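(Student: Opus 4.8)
The plan is to expand the square and bound diagonal and off-diagonal terms separately. Writing $S_J = \sum_{j \in J} Z_j$, we have $\E[S_J^2] = \sum_{j \in J} \E[Z_j^2] + 2\sum_{i < j,\ i,j \in J} \E[Z_i Z_j]$. For the diagonal terms, since $0 \leq Y_j \leq \widetilde\1_{R_j} \leq 2$ pointwise (as $R_j \subset [0,1)\times[0,1]$ contains at most two primitive vectors of a unimodular lattice) and $y_j = \E[Y_j]$ by the effective single equidistribution applied to $\varphi_j$ — or more simply, by bounding $Y_j^2 \leq 2 Y_j$ — we get $\E[Z_j^2] = \E[Y_j^2] - y_j^2 \leq \E[Y_j^2] \leq 2\E[Y_j]$. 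Here I must be careful that $\E[Y_j]$ is close to $y_j = m_X(\varphi_j)$; this follows from \eqref{single-eq-prop} applied to $f = \varphi_j$ at time $t_j$, with the error $O(\cS_{\infty,1}(\varphi_j) t_j^{-c}) = O(\eps^{-1} t_j^{-c})$, which is harmless provided $t_j$ is bounded below — and indeed $t_j = \tau^j \psi(\tau^j)^{-1} \geq \tau^j \cdot \tau^{j}$ grows, using the hypothesis \eqref{eq:qlogq} only insofar as it keeps $t_j$ polynomially large (actually $\psi(q) \leq q^{-1}$ already gives $t_j \geq \tau^{2j}$). So $\E[Z_j^2] \leq 3 y_j$, say, for $j$ large, and the finitely many small $j$ contribute a bounded constant; absorbing these gives $\sum_{j \in J}\E[Z_j^2] \ll \sum_{j\in J} y_j$ (note $y_j \simeq r_j^2 = \tau^j\psi(\tau^j)$, and for the finitely many exceptional indices we may enlarge $C'$).

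For the off-diagonal terms with $i < j$, I would apply the all-range decorrelation estimate \eqref{eq:all-range} from \Cref{cor-double-+} to $f_1 = \varphi_i$, $f_2 = \varphi_j$, at times $t_i \leq t_j$. Writing $Y_i(s) Y_j(s) = \varphi_i(a(t_i)u(s)x_0)\varphi_j(a(t_j)u(s)x_0)$, we have $\E[Y_i Y_j] = \int_\R \varphi_i(a(t_i)u(s)x_0)\varphi_j(a(t_j)u(s)x_0)\dd\xi(s)$, so $|\E[Y_iY_j] - y_i y_j| = \Delta^\xi_{\varphi_i,\varphi_j}(t_i,t_j)$ and hence $|\E[Z_iZ_j]| = |\E[Y_iY_j] - y_iy_j| = \Delta^\xi_{\varphi_i,\varphi_j}(t_i,t_j)$. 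By \eqref{eq:all-range} and the norm bounds \eqref{eq:controlSobolev},
\[
|\E[Z_iZ_j]| \ll \eps^{-2}\sqrt{y_i}\sqrt{y_j}\, t_i^c t_j^{-c} + \eps^{-1} y_j\, t_i^{-c} + \eps^{-2} t_j^{-c}.
\]
Now I use that $t_i = \tau^i\psi(\tau^i)^{-1}$ and $t_j = \tau^j\psi(\tau^j)^{-1}$, so $t_i^c t_j^{-c} = (\tau^{i-j})^c(\psi(\tau^i)/\psi(\tau^j))^{-c} \leq \tau^{c(i-j)}$ since $\psi$ is non-increasing and $i < j$; this gives geometric decay in $j - i$. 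The middle and last terms: by \eqref{eq:qlogq}, $t_j = \tau^j \psi(\tau^j)^{-1} \leq \tau^{2j}\log^2(\tau^j) \ll_\tau \tau^{3j}$, but more usefully $t_j \geq \tau^{2j}$ from $\psi(q) \leq q^{-1}$, so $t_j^{-c} \leq \tau^{-2cj}$ decays geometrically in $j$; similarly $t_i^{-c} \leq \tau^{-2ci}$. Using $\sqrt{y_iy_j} \leq \tfrac12(y_i + y_j)$ on the first term, then summing the geometric series in $|i-j|$: $\sum_{i<j}\tau^{c(i-j)}(y_i+y_j) \ll \sum_k y_k$; and $\sum_{i<j} y_j \tau^{-2ci} \ll \sum_j y_j$; and $\sum_{i<j}\tau^{-2cj} \ll \sum_j \tau^{-2cj} \ll 1 \ll \sum_j y_j$ provided $J$ is nonempty with, say, $\min J$ bounded — or simply absorb the constant, noting again $y_j \simeq \tau^j\psi(\tau^j)$ and the series $\sum \tau^{-2cj}$ converges while we only need an upper bound by $C'\sum_{j\in J} y_j$ with $C'$ allowed to depend on everything but $J$; if $\sum_{j \in J} y_j$ is small this needs a small amount of care, but since $y_j$ is bounded below on any fixed finite initial segment, one can split off finitely many indices and enlarge $C'$. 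Assembling all three contributions gives $\E[S_J^2] \ll_{\eps,\tau,\xi,\psi} \sum_{j\in J} y_j$, which is the claim (the constant $C'$ is permitted to depend on $\eps$, which is fixed throughout this subsection).

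The main obstacle is purely bookkeeping: one must verify that all the error terms produced by \eqref{eq:all-range} and \eqref{single-eq-prop} — which carry factors $t_i^{-c}$, $t_j^{-c}$, $\eps^{-1}$, $\eps^{-2}$ — are dominated, after summation over the pair $(i,j)$, by $\sum_{j\in J} y_j$ with a constant independent of $J$. The key mechanism making this work is that $t_k = \tau^k\psi(\tau^k)^{-1}$ is at least $\tau^{2k}$ (from $\psi(q)\leq q^{-1}$), so each negative power $t_k^{-c}$ supplies genuine geometric decay in the index $k$, while $t_i^c t_j^{-c} \leq \tau^{-c(j-i)}$ supplies geometric decay in the gap; the lower bound $\psi(q) \geq q^{-1}\log^{-2} q$ of \eqref{eq:qlogq} is what prevents $y_k \simeq \tau^k\psi(\tau^k)$ from being too small relative to these error terms, ensuring $t_k^{-c} \ll y_k$ for $k$ large. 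One should also note that the recentering uses $y_k = m_X(\varphi_k)$ rather than $\E[Y_k]$, so the identity $\E[Z_iZ_j] = \Delta^\xi_{\varphi_i,\varphi_j}(t_i,t_j)$ is exact, which is why the statement recenters by the Haar integral; this is the one place where the precise definition of $Z_k$ matters.
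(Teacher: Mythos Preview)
Your approach is the paper's, but there is one algebraic slip that recurs in both the diagonal and off-diagonal computations. The identity $\E[Z_iZ_j] = \E[Y_iY_j] - y_iy_j$ is \emph{not} exact: since $Z_k = Y_k - y_k$ with $y_k = m_X(\varphi_k)$ a deterministic constant (and in general $y_k \neq \E[Y_k]$), expanding the product gives
\[
\E[Z_iZ_j] \;=\; \E[Y_iY_j] - y_iy_j \;-\; y_i\,\E[Z_j] \;-\; y_j\,\E[Z_i].
\]
Your closing remark that recentering by the Haar integral makes the identity exact has it exactly backwards --- it is precisely this recentering (rather than by $\E[Y_k]$) that produces the extra cross terms. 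The fix is immediate and is what the paper does: bound $|\E[Z_k]| \ll \cS_{\infty,1}(\varphi_k)\,t_k^{-c} \ll \eps^{-1} t_k^{-c}$ via \eqref{single-eq-prop}, after which $y_i|\E[Z_j]| + y_j|\E[Z_i]|$ is absorbed by your middle off-diagonal term. The same correction applies to your diagonal line $\E[Z_j^2] = \E[Y_j^2] - y_j^2$.

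One further point: your treatment of the third off-diagonal error $\sum_{i<j,\,i,j\in J} t_j^{-c}$ via an absolute constant does \emph{not} give a bound uniform in $J$ (consider $J=\{N,N+1\}$ with $N$ large, for which $\sum_{j\in J} y_j \to 0$). The paper instead uses the inequality $\tau^{-ck} \ll y_k$ --- which you correctly identify as the role of \eqref{eq:qlogq} --- in the form $\tau^{-cl} \ll y_k\,\tau^{-c(l-k)}$, reducing this sum to the one already controlled by the geometric-in-gap decay. With these two repairs your argument coincides with the paper's.
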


\begin{proof}
Let $C > 1$ and $c > 0$ be as in the full-range double equidistribution estimate \eqref{eq:all-range}.
In this proof, the implied constants in the $\ll$ notation are allowed to depend on  $C$, $\tau$ and $\eps > 0$.

By definition, for each $k,l \geq1$,
\begin{equation*}
\E[Z_{k}Z_{l}] = \E[Y_{k}Y_{l}] - y_k y_l - \E[Z_k]y_l - y_k\E[Z_l].
\end{equation*}
Combining \eqref{eq:all-range} with the bounds on Sobolev norms from \eqref{eq:controlSobolev}, we obtain for $ k \leq l$,
\[
\abs{\E[Y_{k}Y_{l}] - y_k y_l} \ll \sqrt{y_k y_l} t_k^{c} t_l^{-c} +  y_l t_k^{-c} + t_l^{-c}.
\]
while by \eqref{single-eq-prop}, 
\begin{equation*}
\abs{\E[Z_k]} \ll t_k^{-c}.
\end{equation*}
By expanding the square power, using the above bounds, and recalling from  \eqref{def-tk-rk} that  $t_k \geq \tau^k$ and $t_l/t_k \geq \tau^{l-k}$ for $k\leq l$, we deduce 
\[\E \Bigl[\Bigl(\sum\nolimits_{j \in J}  Z_{j}\Bigr)^2\Bigr] \ll\sum\nolimits_{k,l \in J, k\leq l} (\sqrt{y_k y_l} \tau^{-c(l-k)} +  y_l \tau^{-ck} + \tau^{-cl}).\]
Using $\sqrt{y_k y_l}\leq y_{k}+y_{l}$ and the convergence $\sum_{n = 0}^\infty \tau^{-cn} < + \infty$, the first sum satisfies $\sum\nolimits_{k,l \in J, k\leq l} \sqrt{y_k y_l} \tau^{-c(l-k)}\ll \sum y_{j}$. The convergence  $\sum_{n = 0}^\infty \tau^{-cn} < + \infty$ bounds similarly the second sum. To bound the third sum, note that combining  \eqref{def-tk-rk}  with our assumption \eqref{eq:qlogq}, then using \Cref{L1norm-varphik},  we have
\[
\tau^{-c k}  \ll (k\log \tau)^{-2} \leq r_k^2 \ll y_k.
\]
Hence $\tau^{-cl} \ll y_k \tau^{-c(l-k)}$, so   $\sum\nolimits_{k,l \in J, k\leq l} \tau^{-cl} \ll \sum y_{j}$ as for the first sum.
\end{proof}

The following lemma is a general fact about sequences of random variables. It is abstracted  from Schmidt's proof of the quantitative Khintchine theorem for the Lebesgue measure~\cite{Schmidt}.
See also \cite[Chapter I, Lemma 10]{Sp79}, or \cite[Lemma 2.6]{KM99}.

\begin{lemma}\label{Lemma: Schmidt's argument}
Let $(Y_k)_{k\geq1}$ be a sequence of non-negative real random variables.
Let $(y_k)_{k\geq1} \in [0,1]^{\N^*}$ be a sequence of real numbers, set $Z_k=Y_k-y_k$.
Assume that $\sum_{k=1}^{\infty} y_k = +\infty$ and for some $C_1 \geq 1$
\begin{equation}\label{qeq-2}
\forall n \geq m \geq 1, \quad  \E\Bigl[\bigl(\sum_{k=m}^n Z_k\bigr)^2\Bigr]\leq C_1 \sum_{k=m}^n y_k.
\end{equation}
Then almost surely, for large enough $n$, we have 
\[
\Bigl\lvert \sum_{k=1}^n Z_k  \Bigr\rvert \leq  \Bigl(\sum_{k=1}^n y_k\Bigr)^{1/2} \log^{2}\Bigl(\sum_{k=1}^n y_k\Bigr).
\]
\end{lemma}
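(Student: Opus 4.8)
The plan is to reproduce the classical argument underlying Schmidt's proof of the quantitative Khintchine theorem. Throughout, write $S_n = \sum_{k=1}^n Z_k$ and $\Phi_n = \sum_{k=1}^n y_k$; since $y_k \in [0,1]$ and $\sum_k y_k = +\infty$, the sequence $\Phi_n$ is non-decreasing, tends to $+\infty$, and increases by at most $1$ at each step. As only finitely many indices satisfy $\Phi_n \le e$, it suffices to establish $|S_n| \ll \Phi_n^{1/2}(\log \Phi_n)^2$ for all $n$ with $\Phi_n$ large, the remaining indices being absorbed into $C_2$.

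\emph{Control at checkpoints.} For each integer $r \ge 1$ let $g(r) = \min\{\, n \ge 1 : \Phi_n \ge r \,\}$, so that $r \le \Phi_{g(r)} < r+1$ and $g$ is strictly increasing; put $g(0) = 0$. Setting $V_k = S_{g(k+1)} - S_{g(k)}$ for $k \ge 0$, hypothesis \eqref{qeq-2} gives, for $0 \le i \le j$,
\[
\E\bigl[(\textstyle\sum_{k=i}^{j} V_k)^2\bigr] = \E\bigl[(S_{g(j+1)} - S_{g(i)})^2\bigr] \le C_1(\Phi_{g(j+1)} - \Phi_{g(i)}) \le C_1(j+2-i) \le \textstyle\sum_{k=i}^{j} 2C_1 .
\]
Because the variance of a block of the $V_k$ is thus controlled by the (constant) weights $2C_1$, the Rademacher--Menshov maximal inequality applies and yields $\E\bigl[\max_{1 \le r \le 2^m} S_{g(r)}^2\bigr] \ll m^2\, 2^m$. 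By Markov's inequality, $\mathbb{P}\bigl(\max_{r \le 2^m} |S_{g(r)}| > 2^{m/2} m^{3/2}\log m\bigr) \ll m^{-1}(\log m)^{-2}$, which is summable in $m$; by Borel--Cantelli, almost surely $\max_{r \le 2^m}|S_{g(r)}| \le 2^{m/2}m^{3/2}\log m$ for all large $m$. Taking $m = \lceil \log_2 r \rceil$ gives, almost surely and for all large $r$,
\[
|S_{g(r)}| \ll r^{1/2}(\log r)^{3/2}\log\log r \le r^{1/2}(\log r)^2 \le \Phi_{g(r)}^{1/2}(\log\Phi_{g(r)})^2 .
\]

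\emph{Filling the gaps.} Given $n$ with $\Phi_n \ge 1$, set $r = \lfloor \Phi_n \rfloor$, so $g(r) \le n < g(r+1)$ and $\Phi_n \in [r, r+1)$. Since $Y_k \ge 0$ and $\sum_{g(r) < k \le n} y_k = \Phi_n - \Phi_{g(r)} < 1$, the increment $S_n - S_{g(r)} = \sum_{g(r) < k \le n}(Y_k - y_k)$ is $\ge -1$ and $\le \sum_{g(r) < k \le g(r+1)} Y_k =: M_r$. Writing $M_r = \sum_{g(r) < k \le g(r+1)} Z_k + (\Phi_{g(r+1)} - \Phi_{g(r)})$ with $\Phi_{g(r+1)} - \Phi_{g(r)} < 2$ and $\E[(\sum_{g(r) < k \le g(r+1)} Z_k)^2] \le 2C_1$ by \eqref{qeq-2}, we get $\E[M_r^2] = O(1)$ uniformly in $r$, whence $\mathbb{P}(M_r > r^{1/2}\log r) \ll r^{-1}(\log r)^{-2}$ and, by Borel--Cantelli, $M_r \le r^{1/2}\log r$ for all large $r$ almost surely. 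Therefore $|S_n - S_{g(r)}| \le \max(1, M_r) \ll \Phi_n^{1/2}\log\Phi_n$ for all large $n$, and combining with the checkpoint bound, $|S_n| \ll \Phi_n^{1/2}(\log\Phi_n)^2$ for all large $n$ almost surely. Absorbing the finitely many remaining indices into $C_2$ completes the proof.

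\emph{Main difficulty.} The crux is the checkpoint estimate: applying Chebyshev and Borel--Cantelli directly along all checkpoints $g(r)$ fails, since the tail probability $\mathbb{P}(|S_{g(r)}| > r^{1/2}\lambda_r)$ decays only like $\lambda_r^{-2}$, which is not summable for $\lambda_r$ a power of $\log$. One must instead work on the dyadic scale $m$ and invoke a Rademacher--Menshov-type maximal inequality, which bounds $\max_{r \le 2^m}|S_{g(r)}|$ at the cost of a factor $(\log(\#\text{terms}))^2 \asymp m^2$; this is precisely what produces the power of the logarithm in the final bound. The gap-filling, by contrast, is elementary: \eqref{qeq-2} bounds variances by $\sum y_k$, which stays $O(1)$ across a gap no matter how many terms it contains, and the sign constraint $Y_k \ge 0$ furnishes a free one-sided bound.
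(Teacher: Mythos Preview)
Your proof is correct. The paper itself does not supply a proof of this lemma; it simply cites Schmidt's original argument, Sprind\v{z}uk's book (Chapter~I, Lemma~10), and Kleinbock--Margulis (Lemma~2.6) as references. Your argument is a clean rendition of that classical method: the dyadic maximal inequality you invoke (under the name Rademacher--Menshov, valid here because the block-variance hypothesis \eqref{qeq-2} is exactly the input the dyadic decomposition needs) is the heart of Schmidt's proof, and the gap-filling step exploiting $Y_k \ge 0$ is the standard companion. So there is nothing to compare against within the paper, and what you have written is precisely the argument the references point to.
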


We are now able to conclude the proof of  \Cref{pr:rough}, whence that of the lower bound in \Cref{quant-Khintchine-1/q}. 

\begin{proof}[Proof of \Cref{pr:rough}]
The series $\sum_q q^{-1}\log^{-2}(q)$ is convergent.
Thus, by the convergent case of the Khintchine dichotomy for measures satisfying~\eqref{single-eq-prop} (\Cref{convergent-Khintchine}), we know that if we replace $\psi$ by $q \mapsto \max\{\psi(q), q^{-1}\log^{-2}(q)\}$ (say for $q\geq 3$, and by $q\mapsto 1/2$ else), then for $\xi$-almost every $s \in \R$, the left-hand side of \eqref{eq:sumSk} is increased by only a bounded amount.
For this reason, without loss of generality, we can assume \eqref{eq:qlogq}.

Note that in view of the inequality \eqref{eq:YkleqSk}, we have $\sum_{k=1}^n \mathscr{S}_{k} \geq \sum_{k=1}^n Y_{k}$. 
Equations \eqref{L1norm-varphik} and \eqref{def-tk-rk} yield $1\geq y_{k}\geq \zeta(2)^{-1}(1-O(\eps))(\tau^k-\tau^{k-1})\psi(\tau^k)$, in particular $\sum_{k=1}^\infty y_{k}=\infty$. 
This estimate, combined with the previous paragraph and the variance bound \Cref{L2bound-Zj}, allows to apply  \Cref{Lemma: Schmidt's argument} to get that $\xi$-almost everywhere, $\sum_{k=1}^n Y_{k}\sim \sum_{k=1}^n y_{k}\geq \zeta(2)^{-1}(1-O(\eps))\sum_{k=1}^n (\tau^k-\tau^{k-1})\psi(\tau^k)$. This concludes the proof.
\end{proof}

\subsection{Upper bound estimate} \label{Sec-upp-bd}
The proof of the upper bound in \Cref{quant-Khintchine-1/q} (\Cref{eq:primsol}) is similar. We extend $\psi$ to $\R^+$ by setting $\psi(q)=\min(q^{-1}, \psi(\lfloor q\rfloor)$ for non-integer values of $q$.
We have for $\tau^n \leq N < \tau^{n+1}$ and for every $s \in \R$,
\[
\sT_N(s) \leq \sum_{k=0}^n \sS_k^+(s)
\]
where for every $k \geq 0$,
\[
\sS_k^+(s) := \#\set{(p,q)\in \mathcal{P}(\Z^2)\,: \, \tau^{k} \leq q < \tau^{k+1}, \,0\leq qs - p < \psi(\tau^k)}.
\]
Then
\[
\sS_k^+(s) = \widetilde{\1}_{ [0,r_k) \times [r_k,\tau r_k)} \bigl( a(t_k)u(s)x_0 \bigr) \leq \varphi_k^+\bigl( a(t_k)u(s)x_0 \bigr)
\]
where $\varphi_k^+ = \theta_\eps * \widetilde\1_{R_k^+}$ with $\eps\in (0, 1)$ small and  
\[R_k^+=[-\eps r_k,(1+\eps)r_k) \times [(1-\eps)r_k,(\tau + \eps) r_k).\]
Note that  $\psi(q)\leq 1/q$ implies  $r_k \leq 1$, so $R_k^+$ is contained in the ball of radius $4$ centered at $0\in \R^2$. Hence, $\norm{\widetilde\1_{R_k^+}}_{L^{\infty}}$ is  bounded above by an absolute constant (independently of $k$).
For the rest of the proof, we can use mutatis mutandis the argument for the lower bound.

\bibliographystyle{abbrv} 
\bibliography{khintchine}
\end{document}